\numberwithin{equation}{section}
\renewcommand{\subsection}[1]{\hspace{-\parindent}\refstepcounter{subsection}{\bf (\arabic{section}.\alph{subsection}) #1.}\addcontentsline{toc}{subsection}{\bf #1.}}
\newcommand{\appsubsection}[1]{\hspace{-\parindent}\refstepcounter{subsection}{\bf (\Alph{section}.\alph{subsection}) #1.}\addcontentsline{toc}{subsection}{\bf #1.}}
\newenvironment{nouppercase}{%
  \renewcommand{\uppercasenonmath}[1]{}}{}
\theoremstyle{plain}
\newtheorem{thm}{Theorem}[section]
\newtheorem{theorem}[thm]{Theorem}
\newtheorem{assumption}[thm]{Assumption}
\newtheorem{corollary}[thm]{Corollary}
\newtheorem{prerequisites}[thm]{Prerequisites}
\newtheorem{remark}[thm]{Remark}
\newtheorem{example}[thm]{Example}
\newtheorem{lemma}[thm]{Lemma}
\newtheorem{conjecture}[thm]{Conjecture}
\newtheorem*{claim*}{Claim} 
\newtheorem*{lemma*}{Lemma}
\newtheorem*{theorem*}{Theorem}
\newtheorem*{conjecture*}{Conjecture}
\newtheorem{application}[thm]{Application}
\newtheorem{question}[thm]{Question}
\newcommand{\bC}{{\mathbb C}}
\newcommand{\bK}{{\mathbb K}}
\newcommand{\bP}{{\mathbb P}}
\newcommand{\bZ}{{\mathbb Z}}
\newcommand{\scrA}{\EuScript A}
\newcommand{\scrB}{\EuScript B}
\newcommand{\scrE}{\EuScript E}
\newcommand{\scrF}{\EuScript F}
\newcommand{\scrK}{\EuScript K}
\newcommand{\scrO}{\EuScript O}
\newcommand{\scrP}{\EuScript P}
\newcommand{\scrS}{\EuScript S}
\newcommand{\scrT}{\EuScript T}
\newcommand{\scrU}{\EuScript U}
\newcommand{\frakg}{\mathfrak{g}}
\newcommand{\half}{{\textstyle\frac{1}{2}}}
\newcommand{\iso}{\cong}
\renewcommand{\hom}{\mathit{hom}}
\newcommand{\cornerbar}[1]{
  \tikz[baseline=(n.base)]{\node(n)[inner sep=1pt]{$#1$};
    \draw[line cap=round](n.north west)--(n.north east)--(n.south east);
  }
}
\newcommand{\roundbar}[1]{
  \tikz[baseline=(n.base)]{\node(n)[inner sep=1pt]{$#1\,$};
    \draw[line cap=round, rounded corners](n.north west)--(n.north east)--(n.south east);
  }
}
\newcommand{\Tw}{\mathit{Tw}}
\newcommand{\cornersubbar}[1]{ 
 \begin{tikzpicture}[baseline=(n.base)]
    \node(n)[inner sep=1pt]{{$\scriptstyle #1$}};
    \draw[line cap=round](n.north west)--(n.north east)--(n.south east);
  \end{tikzpicture}
}
\title[LEFSCHETZ FIBRATIONS]{\Large\larger\rm Fukaya $A_\infty$-structures associated to\\ Lefschetz fibrations. II\,\nicefrac{1}{2}}
\author{Paul Seidel}
\begin{document}
\begin{nouppercase}
\maketitle
\end{nouppercase}
\begin{abstract}
We consider a version of the relative Fukaya category for anticanonical Lefschetz pencils. There are direct connections between the behaviour of this category and  enumerative geometry: some of these are results announced here, others remain conjectural. Among the conjectural goals is a formula for the dependence of the Fukaya category of Calabi-Yau hypersurfaces on the Novikov (area) parameter.
\end{abstract}

\section{Introduction}
This paper continues a line of thought from \cite{seidel12,seidel14b}, whose aim is to relate various flavours of Fukaya categories which appear in the context of Lefschetz fibrations. Large parts of that picture remain conjectural. The emphasis here is on making it more precise, and exploring its implications. We should warn the reader that the main result will only be stated without proof. A significant amount of space is devoted to example computations, which provide evidence for the general picture.

\subsection{Fibrewise compactification\label{subsec:the-problem}}
The most general setting we work in is as follows. Take an exact symplectic Lefschetz fibration, whose base is the complex plane, and whose (smooth) fibre is a Liouville-type symplectic manifold. To this fibration, we can associate its Fukaya $A_\infty$-category (the original idea is due to Kontsevich; for a formal definition, see \cite{seidel04}). Assume now that the fibre admits a compactification, obtained by adding a smooth divisor at infinity. In fact, assume that this can be done simultaneously for all fibres, giving rise to a fibrewise compactification (properification) of the Lefschetz fibration. To this fibrewise compactification, one can associate a one-parameter formal deformation of the original Fukaya category (a version of the relative Fukaya category \cite{seidel02,seidel11b}). In fact, one can also allow a choice of ``bulk term'', which leads to a wider class of deformations with the same formal behaviour. This choice can be thought of as a formally deformed (complexified) symplectic class; our terminology follows \cite{fukaya-oh-ohta-ono11}, where that idea proved to be important in geometric applications.

\begin{question} \label{qu:1}
Can we choose the bulk term so that the formal deformation induced by fibrewise compactification is trivial?
\end{question}

A priori, nontriviality of the deformation is what one expects, since compactification adds new holomorphic discs, which change the $A_\infty$-structure. Indeed, if we were talking about relative Fukaya categories in the ordinary sense (for symplectic manifolds, rather than Lefschetz fibrations), the answer to the question above is negative in all known cases. In the Lefschetz fibration context, it is easy to cook up simple examples for which the answer is again negative (Remark \ref{th:add-puncture}). On the other hand, there are interesting examples from mirror symmetry \cite{auroux-katzarkov-orlov04} in which the deformation is trivial (without introducing a bulk term; triviality of this deformation has sometimes been thought of as a general principle within mirror symmetry, but that turns out to be incorrect).

To see what direction to take from there, it is useful to restrict temporarily to first order infinitesimal deformation theory (whereas usually, we are interested in deformations to all orders in the formal parameter). The first order deformations of any $A_\infty$-category are governed by its Hochschild cohomology. In the case of Fukaya categories of Lefschetz fibrations, a conjectural geometric interpretation of Hochschild cohomology was given in \cite{seidel00b} (there is no proof of this in the literature yet, even though proofs of closely related results have been announced \cite{perutz10, mau11}, and the question has significant overlap with \cite{abouzaid-ganatra14}). The main ingredient in this interpretation is the monodromy at infinity. It therefore makes sense to consider a class of Lefschetz fibrations which have particularly simple monodromy at infinity, namely those coming from Lefschetz pencils. To make grading issues as simple as possible, we restrict attention even more, to anticanonical Lefschetz pencils. 

\begin{theorem} \label{th:1}
Consider a Lefschetz fibration arising from an anticanonical Lefschetz pencil. Then there is a preferred class of bulk terms, determined by Gromov-Witten invariants of the compactified total space, such that the associated formal deformation of the Fukaya category of the Lefschetz fibration is trivial.
\end{theorem}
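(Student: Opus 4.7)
The plan is to reduce the theorem to the existence of a Maurer--Cartan element in the Hochschild cochain complex $\mathit{CC}^*(\scrF)$ of the Fukaya category $\scrF$ of the Lefschetz fibration which trivialises the formal deformation at every order. The construction of that element should be carried out geometrically, using moduli spaces of discs carrying an auxiliary interior marked point.

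\textbf{Setup and first-order term.} The structure maps of the relative Fukaya category are built from counts of holomorphic discs in the compactified total space $\tilde X$ with prescribed tangency profiles to the divisor $D$ at infinity, the formal parameter $q$ tracking total tangency order. Expanding in $q$, the $q^1$-part of each $A_\infty$-operation counts discs with a single simple intersection with $D$, and yields an order-$q$ deformation class $\tau_1 \in HH^2(\scrF,\scrF)$. Analogous $\tau_k$ record the higher-order terms.

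\textbf{Bulk term.} For an anticanonical pencil with compactified total space $\tilde X$, I would take the bulk term $\mathfrak{b}(q) \in H^*(\tilde X;\bZ)[[q]]$ assembled from genus-zero Gromov--Witten invariants of $\tilde X$: at each order $q^k$, $\mathfrak{b}_k$ collects contributions of rational curves in $\tilde X$ whose intersection number with the fibre class of the pencil equals $k$.

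\textbf{Construction of the trivialisation.} The key step is to produce, at each order $q^k$, Hochschild cochains $\phi_k \in \mathit{CC}^1(\scrF)$ satisfying a recursive Maurer--Cartan-type equation whose leading term equates $d\phi_k$ with $\tau_k$ plus the action of $\mathfrak{b}_k$ on $\scrF$. Geometrically $\phi_k$ counts discs carrying one interior marked point allowed to move freely over $\tilde X$, with total tangency $k$ to $D$. A one-parameter degeneration in which the interior marked point is pushed toward $D$ exhibits codimension-one strata of three types: (i) strata where the marked point lies on $D$, reassembling $\tau_k$; (ii) sphere bubbles pinching off (after stretching the neck along $D$, into the normal bundle of $D$), which compute exactly the GW numbers feeding $\mathfrak{b}_k$; (iii) breakings into two discs, which produce the Hochschild coboundary $d\phi_k$. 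This decomposition yields the required Maurer--Cartan identity.

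\textbf{Main obstacle.} The hard part is the moduli-theoretic analysis underlying step (iii): controlling families of discs with interior marked points as the tangency orders to $D$ grow, and matching the sphere-bubble contributions rigorously with genuine genus-zero GW invariants of $\tilde X$ via neck-stretching (or an equivalent log/relative formulation). Anticanonicity is what keeps all virtual dimensions in balance and excludes obstructed multiply-covered sphere configurations. A further subtlety particular to the Lefschetz-fibration context is that the gauge construction must be compatible with the non-trivial monodromy at infinity encoded in $\scrF$; it is precisely the restriction to anticanonical pencils, where that monodromy admits a clean cohomological description, that makes the Maurer--Cartan obstructions at each order land inside the image of the bulk-action map, and hence be cancellable by the prescribed $\mathfrak{b}$.
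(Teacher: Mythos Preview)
Your strategy is recognisably in the right spirit, but it diverges from the paper's argument in both its overall architecture and, more seriously, in the description of the bulk term.

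\textbf{On the bulk term.} You say that at order $q^k$ the bulk term ``collects contributions of rational curves whose intersection number with the fibre class equals $k$''. This is not how the bulk term arises. In the paper, the bulk term $\cornerbar{b} \in H^2(\cornerbar{E};1+q\bC[[q]])$ is \emph{not} a Gromov--Witten class; it is the solution of a nonlinear first-order ODE (Assumption~\ref{th:fundamental-assumption}, equation~\eqref{eq:fundamental-equation})
\[
q^{-1}[\delta E|] + (\partial_q \cornerbar{b})/\cornerbar{b} \;=\; \psi(q)\, z^{(1)}_{q,\cornersubbar{b}} - \eta(q)[\bar{M}],
\]
whose coefficients are the section-counting Gromov--Witten invariants $z^{(1)}$. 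The bulk term appears on both sides (it also twists $z^{(1)}$), so one genuinely has to solve an ODE order by order (Lemma~\ref{th:solutions}). Your direct identification of $\mathfrak{b}_k$ with GW numbers misses this self-referential structure, and without it there is no reason your sphere-bubble contributions should exactly cancel the obstructions.

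\textbf{On the architecture.} The paper does not build gauge transformations $\phi_k$ order by order. Instead it uses a \emph{linear} criterion: the deformation is trivial iff the Kaledin class $[\partial_q \mu^*] \in \mathit{HH}^2(\scrA_{q,\bar{b}},\scrA_{q,\bar{b}})$ vanishes (Lemma~\ref{th:deformation-is-trivial}). This class is then identified, via the closed-open map, with the image of $-\partial_q[\omega_{\bar{E},q,\bar{b}}]$ (Lemma~\ref{th:symplectic-class}). The monodromy at infinity enters not as a ``subtlety'' but as the central structural ingredient: the closed-open map factors through the fixed-point Floer cohomology $\mathit{HF}^*_{q,\bar{b}}(\bar\phi)$ of the global monodromy $\bar\phi$, and there is a long exact sequence~\eqref{eq:key-les}
\[
\cdots \to H^{*-2}(\bar{M})[[q]] \longrightarrow H^*(\bar{E})_q \longrightarrow \mathit{HF}^*_{q,\bar{b}}(\bar\phi) \to \cdots
\]
The geometric heart is Lemma~\ref{th:s-class}: the section-counting class $z^{(1)}_{q,\cornersubbar{b}}$, restricted to $\bar{E}$, is precisely the image of $1 \in H^0(\bar{M})[[q]]$ under the first map. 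Hence if one arranges (by solving the ODE) that $-\partial_q[\omega_{\bar{E},q,\bar{b}}]$ is a multiple of $z^{(1)}$, it dies in $\mathit{HF}^*(\bar\phi)$, and the Kaledin class vanishes.

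So the paper's argument is closed-string throughout: the only moduli spaces that enter are of holomorphic spheres (sections), and the open-string side is handled by the abstract Kaledin criterion plus the factorisation through $\mathit{HF}^*(\bar\phi)$. Your disc-with-interior-marked-point approach could in principle be made to work, but it would require you to independently discover the ODE for $\cornerbar{b}$, and your degeneration analysis would effectively have to reprove Lemma~\ref{th:s-class} in a much less transparent packaging.
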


This will be made precise later (Theorem \ref{th:main} and Corollary \ref{th:main-cor}), giving a positive answer to Question \ref{qu:1} for that class of Lefschetz fibrations. By ``determined by Gromov-Witten invariants'' we mean the following: fibrations obtained from a Lefschetz pencil have natural extensions over the two-sphere. The enumerative data that appear in Theorem \ref{th:1} are invariants counting holomorphic spheres which have degree one over the base. More precisely, those Gromov-Witten invariants determine the coefficients of a nonlinear ODE \eqref{eq:fundamental-equation}: by solving that ODE (as a formal power series), one obtains the desired bulk term.

\subsection{Natural transformations}
As explained in \cite{seidel06,seidel08,seidel12}, Fukaya categories of Lefschetz fibrations always come with an additional piece of algebraic structure: a canonical natural transformation from the Serre functor to the identity functor (one expects that this is the leading term of a more complicated structure, which goes by various names, such as: ``noncommutative divisor'' \cite{seidel14b}; and in a slightly more sophisticated version, ``$A_\infty$-algebra with boundary'' \cite{seidel12} or ``pre-Calabi-Yau algebra'' \cite{kontsevich-vlassopoulos13}). The conjectures from \cite{seidel06,seidel08} (partially proved in \cite{bourgeois-ekholm-eliashberg09}) relate this natural transformation to wrapped Floer cohomology in the total space of the fibration. On the other hand, the main result from \cite{seidel12} relates the same natural transformation to Floer cohomology in the fibre.

We will now conjecturally extend the previous discussion, by including this additional datum. In the presence of a fibrewise compactification, one should not only get a deformation of the Fukaya category, but also a corresponding deformation of its canonical natural transformation. Even in cases where the deformation of the category is trivial, the natural transformation can (and in fact is expected to) deform nontrivially.

\begin{conjecture} \label{th:2}
Suppose that we are in the situation of Theorem \ref{th:1} (anticanonical Lefschetz pencil, with appropriately chosen bulk term). Then, the deformation of the canonical natural transformation is determined by Gromov-Witten invariants of the compactified total space.
\end{conjecture}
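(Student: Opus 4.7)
The plan is to extend the strategy used for Theorem \ref{th:1} one categorical level up, exploiting the pre-Calabi-Yau / noncommutative divisor framework of \cite{seidel12, seidel14b, kontsevich-vlassopoulos13}. The first step is to make the deformation theory precise: the canonical natural transformation from the Serre functor to the identity is the leading term of a pre-Calabi-Yau structure on the Fukaya category, so its $t$-adic deformations are governed by a cyclic refinement of the Hochschild complex. I would package the fibrewise compactification together with the bulk term supplied by Theorem \ref{th:1} as producing a well-defined class in this deformation complex. In this language, Conjecture \ref{th:2} becomes the assertion of a closed-form expression for that class in terms of enumerative invariants of the compactification.

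Second, I would identify the relevant moduli spaces. The canonical natural transformation is built from holomorphic discs whose behaviour near infinity in the total space of the Lefschetz fibration is prescribed by the monodromy at infinity (equivalently, by a puncture asymptotic to a horizontal orbit). In the compactified geometry this puncture is the place where a sphere bubble of degree one over the base may attach. The deformation of the natural transformation should count such configurations, decorated with additional interior constraints to the compactifying divisor coming from the bulk term; the Gromov-Witten numbers appearing in the conjecture should be exactly those of the compactified total space enumerating these spheres with the prescribed incidence data.

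Third, a boundary analysis of one-parameter families of such moduli spaces should yield the analogue, one level up, of the ODE \eqref{eq:fundamental-equation}. The mechanism is standard: enlarge the moduli space by one extra interior marked point (or one extra sphere bubble) and read off the identity forced by its codimension-one strata. The choice of bulk term dictated by Theorem \ref{th:1} is exactly what makes the ``interior'' contributions cancel, leaving an expression for the deformation of the natural transformation in terms of the Gromov-Witten invariants above.

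The hard part will be technical rather than conceptual: setting up moduli spaces that combine simultaneously (i) multiple boundary punctures carrying the Serre-to-identity structure, (ii) interior constraints to the divisor at infinity, and (iii) sphere bubbles accounting for the compactification, with coherent transversality and orientations, and verifying that their boundary operator realizes a pre-Calabi-Yau deformation rather than merely an $A_\infty$-deformation. A reasonable intermediate target, and a strong sanity check, is to prove the conjecture first in the explicit examples discussed later in the paper, where both sides should collapse to concrete power-series identities and the formal obstructions ought to be absent.
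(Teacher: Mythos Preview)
The statement you are addressing is labelled a \emph{Conjecture} in the paper, and it remains one: the paper does not prove it. What the paper does is make the statement precise (as Conjecture~\ref{th:connection}, via the matrix connection equation \eqref{eq:connection-equation}), verify its formal consistency with the symmetries \eqref{eq:symmetry-1}--\eqref{eq:symmetry-2}, relate it heuristically to disc-counting through \eqref{eq:wq} and \eqref{eq:scalar-quadratic}, and test it numerically in the example of Section~\ref{sec:example-revisited}. There is therefore no ``paper's own proof'' to compare your proposal against.

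That said, your outline is broadly compatible with the direction the paper points in, with one structural omission. You speak throughout of deforming \emph{the} canonical natural transformation, but the precise form of the conjecture involves a \emph{pair} $(\rho_{q,\bar b},\sigma_{q,\cornersubbar{b}})$ of natural transformations from the Serre functor to the identity, coupled by a first-order $2\times 2$ system with matrix $\Gamma$ (see \eqref{eq:fundamental-solution} and \eqref{eq:connection-equation}). The second element $\sigma$ comes from the construction in \cite{seidel14b}, and its deformed version $\sigma_{q,\cornersubbar{b}}$ is itself conjectural. Your boundary-degeneration picture, with a degree-one sphere bubble attaching at the horizontal puncture, is morally the geometry behind the $\psi(q) z^{(1)}$ term, but to recover the full connection you would also need to account for the $z^{(2)}$ (bisection) contribution and for the off-diagonal coupling; equivalently, if you prefer to work with $\rho$ alone, you should be aiming for the second-order scalar equation \eqref{eq:second-order}. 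Without that second ingredient your proposed one-parameter moduli analysis cannot produce the correct ODE.
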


The precise formulation (Conjecture \ref{th:connection}) involves another piece of structure, discovered in \cite{seidel14b}: for an anticanonical Lefschetz pencil, the Fukaya category has a second distinguished natural transformation from the Serre functor to the identity. After fibrewise compactification, we get deformations of both natural transformations. As in Theorem \ref{th:1}, the conjecture then takes on the form of an ODE which the deformations are supposed to satisfy, and whose coefficients are Gromov-Witten invariants. Actually, this is a linear (2x2 matrix) ODE, \eqref{eq:connection-equation}, which one can think of as a formula for a connection (acting on the space of natural transformations from the Serre functor to the identity).

\subsection{The Fukaya category of the fibre}
The main result of \cite{seidel12} established a relation between the Fukaya category of a Lefschetz fibration and that of its fibre. By assuming that this carries over to the deformed version, we arrive at the following conjecture (see Section \ref{subsec:fibre} for a more thorough explanation):

\begin{conjecture} \label{th:3}
In the situation of Conjecture \ref{th:2}, consider the full subcategory of the relative Fukaya category of the fibre consisting of a basis of vanishing cycles. Then, that subcategory is defined over a subalgebra of the algebra of formal power series, which has a single generator. Moreover, that generator can be explicitly described in terms of Gromov-Witten invariants of the compactified total space.
\end{conjecture}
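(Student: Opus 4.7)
The plan is to reduce Conjecture \ref{th:3} to Conjecture \ref{th:2} using the fibre/total-space comparison of \cite{seidel12}, upgraded to the bulk-deformed setting. The point is that Theorem \ref{th:1} kills all $q$-dependence inside the Lefschetz fibration category itself, so anything surviving in the vanishing-cycle subcategory of the fibre has to be visible through the two deformed natural transformations of Conjecture \ref{th:2}; since those transformations satisfy a linear ODE whose solution has only one scalar degree of freedom, the remaining dependence on $q$ must factor through a single power series.

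First I would formulate a bulk-deformed analogue of the main theorem of \cite{seidel12}. In the exact case, that theorem reconstructs the $A_\infty$-structure on a basis of vanishing cycles inside the fibre from the $A_\infty$-structure on the corresponding Lefschetz thimbles in the total space, twisted by the canonical natural transformation from the Serre functor to the identity. In the anticanonical pencil setting with the preferred bulk term of Theorem \ref{th:1}, the analogous statement should express the relative Fukaya category of the fibre, restricted to vanishing cycles, as a construction built from (i) the deformed Fukaya category of the Lefschetz fibration and (ii) the deformed pair of natural transformations of Conjecture \ref{th:2}. Technically, one proves this by a neck-stretching argument near the critical fibres of the pencil: a holomorphic disc in the fibre with boundary on the vanishing cycles degenerates into a disc in the total space plus a correction at each critical value, measured by the two natural transformations.

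Combining this reduction with Theorem \ref{th:1} and Conjecture \ref{th:2}, the vanishing-cycle subcategory depends on $q$ only through the deformation of the natural transformations. By \eqref{eq:connection-equation}, those transformations evolve under a $2\times 2$ linear ODE whose fundamental solution is, up to a fixed change of basis, controlled by a single scalar primitive $w(q)\in k[[q]]$ whose Taylor coefficients are universal polynomials in the genus-zero Gromov-Witten invariants of the compactified total space. Hence the structure constants of the vanishing-cycle subcategory lie in the subalgebra $k[[w(q)]]\subset k[[q]]$, with the generator $w(q)$ read off explicitly from \eqref{eq:connection-equation}.

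The main obstacle is the first step: rigorously formulating and proving the bulk-deformed analogue of \cite{seidel12}. The exact-case proof is already a nontrivial neck-stretching and gluing argument; in the presence of a fibrewise compactification and a bulk term, one must carefully bookkeep both disc bubbling into the total space and intersections with the divisor at infinity, and match these contributions to the corrections encoded in the two deformed natural transformations. A secondary issue is confirming that $w(q)$ really is the \emph{single} generator and not just \emph{a} generator of a possibly larger subalgebra; this should follow from a rank computation for the connection \eqref{eq:connection-equation} acting on the space of natural transformations relevant to the vanishing-cycle subcategory.
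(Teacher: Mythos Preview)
Your overall strategy matches the paper's: both explanations reduce Conjecture~\ref{th:3} to (a) Theorem~\ref{th:1}, (b) Conjecture~\ref{th:2} (the ODE for the two natural transformations), and (c) a bulk-deformed analogue of the fibre/total-space comparison from \cite{seidel12}. The paper formulates (c) precisely as Conjecture~\ref{th:get-fibre}: the vanishing-cycle subcategory $\scrB_{q,\bar{b}|\bar{M}}$ carries the structure of a \emph{deformation of noncommutative divisors} on $\scrA$, with leading order term $\rho_{q,\bar{b}}$. Your ``construction built from (i) and (ii)'' is the same idea, stated informally.

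Where your argument is genuinely incomplete is the passage from a $2\times 2$ linear ODE to a \emph{single} generator. You assert that the fundamental solution is ``controlled by a single scalar primitive $w(q)$'', but a $2\times 2$ fundamental matrix has no such a priori reduction, and your ``rank computation'' remark does not address this. The paper's mechanism is specific and different: by Conjecture~\ref{th:connection}, the leading order term of the noncommutative divisor deformation is $\theta_q = \Theta_{11}\rho + \Theta_{12}\sigma$. One then invokes the homological vanishing hypothesis \eqref{eq:no-negative} (expected for anticanonical pencils), so that Corollary~\ref{th:divide-out} applies: the rescaling of Remark~\ref{th:rescale} replaces $\theta_q$ by $\rho + t(q)\sigma$ with $t = \Theta_{12}/\Theta_{11}$, and Lemma~\ref{th:v-restriction} (using the weight filtration on $\scrB$) then shows the entire $A_\infty$-structure can be chosen to lie in $\bC[t]$. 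The single generator is thus $t = \Theta_{12}/\Theta_{11}$, not a primitive of the connection. Your proposal is missing both the noncommutative divisor formalism (with its weight grading) and the vanishing condition \eqref{eq:no-negative}; without these, there is no reason the $q$-dependence should collapse to a polynomial ring in one variable rather than, say, the subalgebra generated by all four matrix entries $\Theta_{ij}$.
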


One should compare this to the predictions made by homological mirror symmetry \cite{kontsevich94}. Suppose that (as is the case for the classical mirror constructions) the mirror family is obtained from an algebraic family of Calabi-Yau manifolds, by restricting to a formal disc in the parameter space (centered at the relevant large complex structure limit point). Roughly speaking, the power series that appear would then be Taylor series of functions on the parameter space. In these terms, our construction singles out a particular choice of formal disc (corresponding to the choice of bulk term) which is the germ of a rational curve lying in the parameter space. However, note that Conjecture \ref{th:3} is designed to be independent of mirror symmetry considerations. It is also independent of the usual formalism involved with mirror maps (variations of Hodge structure, Yukawa couplings). Instead, it relies heavily on the fact that our Calabi-Yau manifolds are fibres of a Lefschetz pencil. In those respects, our approach differs from the results of \cite{ganatra-perutz-sheridan14}, which (under suitable assumption) characterize mirror maps entirely within the formalism of Fukaya categories of Calabi-Yau manifolds.

\subsection{Structure of the paper} 
Section \ref{sec:notation} reviews the basic ingredients: Lefschetz fibrations, Fukaya categories, Gromov-Witten invariants, and some homological algebra (additional details for the algebraic part are provided in Appendix \ref{sec:algebra}). The discussion of Gromov-Witten theory continues in Section \ref{sec:gw}, whose aim is to showcase a certain assumption on the Gromov-Witten invariants of sections, and its implications. That assumption returns in the context of Fukaya categories in Section \ref{sec:main}, which contains the announcement of the main result, together with an outline of its proof, as well as the formulations of what would conjecturally be the next steps. Sections \ref{sec:elementary}--\ref{sec:example-revisited} discuss example computations. Even though these have been separated out, they should be viewed as an integral part of the paper, since (in spite of their limitations) they provide important support for the general ideas under consideration. We therefore suggest the following combined reading order:
Sections \ref{subsec:lefschetz}--\ref{subsec:fukaya-2}, \ref{sec:elementary}, \ref{subsec:gromov-witten},\ref{subsec:formalism}, \ref{subsec:trivial}, \ref{sec:apply-theorem}, \ref{subsec:eigenvalues}, \ref{subsec:nc-divisors}, \ref{subsec:conjecture}, \ref{subsec:example-connection}, \ref{subsec:fibre}, \ref{subsec:mirror-map}.

{\em Acknowledgments.} Maxim Kontsevich pointed out that my original formulation was in contradiction with expectations from mirror symmetry; and Mohammed Abouzaid made the crucial suggestion of resolving that by a suitable choice of bulk term. I learned much of the material in Sections \ref{subsec:cubic-pencil} and \ref{subsec:cp2-pencil} from Denis Auroux. Lemma \ref{th:artin} grew out of a discussion with Michael Artin, with the assistance of Bjorn Poonen. Remark \ref{th:oberdieck} is a variant of an observation made by Georg Oberdieck. 

This work was partially supported by: the Radcliffe Institute for Advanced Study at Harvard University (through a Fellowship, and by providing a stimulating working environment); the Simons Foundation (through a Simons Investigator award); and the National Science Foundation (through NSF grants DMS-1005288 and DMS-1500954).

\section{Setup and notation\label{sec:notation}}

\subsection{Lefschetz fibrations and Lefschetz pencils\label{subsec:lefschetz}}
Consider a symplectic Lefschetz fibration with closed fibres,
\begin{equation} \label{eq:fibrewise-compactification}
\bar{\pi}: \bar{E} \longrightarrow \bC
\end{equation}
(see e.g.\ \cite{auroux03} for an exposition). Our definition of such a fibration includes the condition that, at any regular point, the restriction of the symplectic form $\omega_{\bar{E}}$ to the symplectic orthogonal complement of the fibrewise tangent space should be positive (with respect to the orientation induced from the base $\bC$). We will always assume that $c_1(\bar{E}) = 0$, and in fact, that $\bar{E}$ comes with a preferred homotopy class of trivializations of its canonical bundle (a symplectic Calabi-Yau structure). The smooth fibre $\bar{M}$ (any two smooth fibres are isomorphic) is a closed symplectic manifold, which also inherits a symplectic Calabi-Yau structure. We write $2n = \mathrm{dim}(\bar{E}) \geq 4$, and assume that $\bar{E}$ (and hence $\bar{M}$) is connected. 

One particular class of fibrations \eqref{eq:fibrewise-compactification} arises as follows. Suppose that one has a closed symplectic manifold $\cornerbar{E}$ together with a symplectic Lefschetz fibration
\begin{equation} \label{eq:extend-to-p1}
\cornerbar{\pi}: \cornerbar{E} \longrightarrow \bC P^1 = \bC \cup \{\infty\}.
\end{equation}
We assume that $c_1(\cornerbar{E})$ is Poincar{\'e} dual to a fibre, and in fact, that $\cornerbar{E}$ comes with a preferred homotopy class of isomorphisms between its canonical bundle and the pullback of $\scrO_{\bC P^1}(-1)$.
We also assume that the fibre at $\infty$ is smooth (and it is then convenient to take $\bar{M}$ to be that fibre). Setting $\bar{E} = \cornerbar{E} \setminus \bar{M}$, and restricting $\cornerbar{\pi}$ accordingly, yields a Lefschetz fibration over $\bC$, as in \eqref{eq:fibrewise-compactification} (including its symplectic Calabi-Yau structure). 

More specifically, we will work with Lefschetz fibrations relative to a fibrewise ample divisor. Let's return to the situation \eqref{eq:fibrewise-compactification}. By a fibrewise divisor we mean a properly embedded symplectic hypersurface
\begin{equation}
\delta E \subset \bar{E},
\end{equation}
such that: no critical points of $\bar\pi$ lie on $\delta E$; and the symplectic parallel transport vector fields are tangent to $\delta E$, and give rise to a flat connection for $\delta E \rightarrow \bC$. If $\delta M \subset \bar{M}$ is the part of $\delta E$ lying in one fibre, then $\delta E \iso \bC \times \delta M$ by parallel transport. The ampleness assumption says that $\delta E$ represents the symplectic form, 
\begin{equation} \label{eq:ample-e}
[\omega_{\bar{E}}] = [\delta E] \in H^2(\bar{E}). 
\end{equation}
(Our convention is that all cohomology groups have complex coefficients, unless indicated otherwise; also, we will use Poincar{\'e} duality freely, without adopting special notation for it.) By restricting $\bar\pi$ to $E = \bar{E} \setminus \delta E$, we get an exact symplectic Lefschetz fibration
\begin{equation} \label{eq:lefschetz}
\pi: E \longrightarrow \bC,
\end{equation}
whose fibre $M = \bar{M} \setminus \delta M$ is a Liouville-type symplectic manifold. We then say that \eqref{eq:fibrewise-compactification} is a fibrewise compactification of \eqref{eq:lefschetz}.

There is a counterpart for \eqref{eq:extend-to-p1}, where we assume the existence of a fibrewise divisor
\begin{equation} \label{eq:diffeo-p1}
\delta E| \subset \cornerbar{E}.
\end{equation}
There are three conditions on this. As before, parallel transport should be tangent to $\delta E|$, and give rise to a flat connection for $\delta E| \rightarrow \bC P^1$, hence to a preferred identification $\delta E| \iso \bC P^1 \times \delta M$. Next, with respect to this identification, the normal bundles to $\delta E| \subset \cornerbar{E}$ and $\delta M \subset \bar{M}$ must be related by
\begin{equation} \label{eq:fibrewise-minus-one}
\nu_{\delta E|} \iso \scrO_{\bC P^1}(-1) \boxtimes \nu_{\delta M}.
\end{equation}
Finally,
\begin{equation} \label{eq:inflation}
[\omega_{\cornersubbar{E}}] = [\delta E|] +\lambda [\bar{M}] \in H^2(\cornerbar{E}) \quad
\text{for some $\lambda$.}
\end{equation}
In this setup, we will say that \eqref{eq:lefschetz} and its fibrewise compactification \eqref{eq:fibrewise-compactification} ``arise from an anticanonical Lefschetz pencil''. The terminology is explained by the following construction. Thanks to \eqref{eq:fibrewise-minus-one}, one can blow down $\delta E| \subset \cornerbar{E}$ along the $\bC P^1$ fibres. The outcome is  a symplectic $2n$-manifold $\roundbar{E}$, carrying a Lefschetz pencil \cite{donaldson98c, donaldson02} of hypersurfaces isomorphic to $\bar{M}$, with base locus $\delta M$ (those hypersurfaces represent the first Chern class, which is also a positive multiple of the symplectic class; $\roundbar{E}$ is monotone). Conversely, given a symplectic Lefschetz pencil of anticanonical divisors on a monotone symplectic manifold, one can blow up the base locus of the pencil to obtain \eqref{eq:extend-to-p1}. Figure \ref{fig:corner} summarizes some of the notation.
\begin{figure}
\begin{center}
\begin{picture}(0,0)%
\includegraphics{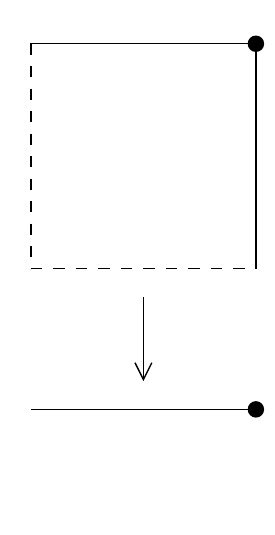}%
\end{picture}%
\setlength{\unitlength}{3552sp}%
\begingroup\makeatletter\ifx\SetFigFont\undefined%
\gdef\SetFigFont#1#2#3#4#5{%
  \reset@font\fontsize{#1}{#2pt}%
  \fontfamily{#3}\fontseries{#4}\fontshape{#5}%
  \selectfont}%
\fi\endgroup%
\begin{picture}(1455,2841)(1036,-1180)
\put(1251,1514){\makebox(0,0)[lb]{\smash{{\SetFigFont{10}{12}{\rmdefault}{\mddefault}{\updefault}{\color[rgb]{0,0,0}$\delta E \iso \bC \times \delta M$}%
}}}}
\put(1576,-736){\makebox(0,0)[lb]{\smash{{\SetFigFont{10}{12}{\rmdefault}{\mddefault}{\updefault}{\color[rgb]{0,0,0}$\bC$}%
}}}}
\put(2401,-736){\makebox(0,0)[lb]{\smash{{\SetFigFont{10}{12}{\rmdefault}{\mddefault}{\updefault}{\color[rgb]{0,0,0}$\infty$}%
}}}}
\put(1651,839){\makebox(0,0)[lb]{\smash{{\SetFigFont{10}{12}{\rmdefault}{\mddefault}{\updefault}{\color[rgb]{0,0,0}$E$}%
}}}}
\put(2476,839){\makebox(0,0)[lb]{\smash{{\SetFigFont{10}{12}{\rmdefault}{\mddefault}{\updefault}{\color[rgb]{0,0,0}$M$}%
}}}}
\put(2476,1364){\makebox(0,0)[lb]{\smash{{\SetFigFont{10}{12}{\rmdefault}{\mddefault}{\updefault}{\color[rgb]{0,0,0}$\delta M$}%
}}}}
\put(1301,-1111){\makebox(0,0)[lb]{\smash{{\SetFigFont{10}{12}{\rmdefault}{\mddefault}{\updefault}{\color[rgb]{0,0,0}Open strata}%
}}}}
\end{picture}%
\hspace{8em}
\begin{picture}(0,0)%
\includegraphics{lefschetz.pdf}%
\end{picture}%
\setlength{\unitlength}{3552sp}%
\begingroup\makeatletter\ifx\SetFigFont\undefined%
\gdef\SetFigFont#1#2#3#4#5{%
  \reset@font\fontsize{#1}{#2pt}%
  \fontfamily{#3}\fontseries{#4}\fontshape{#5}%
  \selectfont}%
\fi\endgroup%
\begin{picture}(1755,2841)(736,-1180)
\put(651,1514){\makebox(0,0)[lb]{\smash{{\SetFigFont{10}{12}{\rmdefault}{\mddefault}{\updefault}{\color[rgb]{0,0,0}$\delta E| \iso \bC P^1 \times \delta M$}%
}}}}
\put(2101,-736){\makebox(0,0)[lb]{\smash{{\SetFigFont{10}{12}{\rmdefault}{\mddefault}{\updefault}{\color[rgb]{0,0,0}$\infty$}%
}}}}
\put(2176,1364){\makebox(0,0)[lb]{\smash{{\SetFigFont{10}{12}{\rmdefault}{\mddefault}{\updefault}{\color[rgb]{0,0,0}$\delta M$}%
}}}}
\put(1001,-1111){\makebox(0,0)[lb]{\smash{{\SetFigFont{10}{12}{\rmdefault}{\mddefault}{\updefault}{\color[rgb]{0,0,0}Closed strata}%
}}}}
\put(2176,839){\makebox(0,0)[lb]{\smash{{\SetFigFont{10}{12}{\rmdefault}{\mddefault}{\updefault}{\color[rgb]{0,0,0}$\bar{M}$}%
}}}}
\put(1351,839){\makebox(0,0)[lb]{\smash{{\SetFigFont{10}{12}{\rmdefault}{\mddefault}{\updefault}{\color[rgb]{0,0,0}$\cornerbar{E}$}%
}}}}
\put(1276,-736){\makebox(0,0)[lb]{\smash{{\SetFigFont{10}{12}{\rmdefault}{\mddefault}{\updefault}{\color[rgb]{0,0,0}$\bC P^1$}%
}}}}
\end{picture}%
\end{center}
\caption{\label{fig:corner}}
\end{figure}

\subsection{Fukaya categories of Lefschetz fibrations\label{subsec:fukaya}}
To any exact symplectic Lefschetz fibration \eqref{eq:lefschetz} one can associate its Fukaya $A_\infty$-category $\scrF(\pi)$. The Calabi-Yau condition in our framework ensures that this is $\bZ$-graded. We will use complex coefficients for Floer cohomology, so that $\scrF(\pi)$ is defined over $\bC$. A fibrewise compactification gives rise to a formal deformation $\scrF_q(\bar\pi)$ of $\scrF(\pi)$, which is still $\bZ$-graded, and defined over $\bC[[q]]$. This is a version of the relative Fukaya category; the formal variable $q$ counts the intersection number of pseudo-holomorphic maps with $\delta E$. 

There are twisted versions $\scrF_b(\pi)$ of $\scrF(\pi)$, associated to a choice of bulk term
\begin{equation} \label{eq:b0-field}
b \in H^2(E;\bC^*).
\end{equation}
Very roughly speaking, for $A \in H_2(E)$, $b \cdot A \in \bC^*$ is a weight with which pseudo-holomorphic curves in class $A$ are counted. Similarly, there are twisted versions $\scrF_{q,\bar{b}}(\bar\pi)$ of $\scrF_q(\bar\pi)$, this time depending on 
\begin{equation} \label{eq:b1-field}
\bar{b} \in H^2(\bar{E};\bC[[q]]^\times).
\end{equation}
The choices \eqref{eq:b0-field} and \eqref{eq:b1-field} are related as follows: $\scrF_{q,\bar{b}}(\bar\pi)$ is a formal deformation of $\scrF_b(\pi)$, where $b$ is obtained from $\bar{b}$ by setting $q = 0$ and then restricting to $E$. Again, the bulk term determines weights, which are now given by 
\begin{equation} \label{eq:q1}
q^{\delta E \cdot A} (\bar{b} \cdot A).
\end{equation}
Reparametrizations of $q$ have a natural interpretation within this framework. Namely, suppose that we take $\scrF_{q,\bar{b}}(\bar\pi)$ and apply a parameter change 
\begin{equation} \label{eq:reparametrize-q}
q \longmapsto \beta(q), \;\; \beta(0) = 0, \; \beta'(0) \neq 0.
\end{equation}
The outcome is isomorphic to $\scrF_{q,\bar{b}_\beta}(\bar\pi)$, where
\begin{equation} \label{eq:change-of-bulk}
\bar{b}_\beta(q) = \bar{b}(\beta(q)) \cdot (\beta(q)/q)^{[\delta E]}.
\end{equation}
The first term in \eqref{eq:change-of-bulk} is obtained by applying the parameter change to the multiplicative group $\bC[[q]]^\times$ in which $\bar{b}$ takes its values; the second term maps $A \in H_2(\bar{E})$ to $(\beta(q)/q)^{\delta E \cdot A} \in \bC[[q]]^\times$. In particular, if $H^2(\bar{E})$ is one-dimensional (and hence generated by $[\delta E]$), then any $\scrF_{q,\bar{b}}(\bar\pi)$ is just a reparametrized version of $\scrF_q(\bar\pi)$. It is helpful to think of $q$ (including its appearance in the bulk term) as the variable in a ``symplectic class''
\begin{equation} \label{eq:log-area}
[\omega_{\bar{E},q,\bar{b}}] = -\log(q) [\delta E] - \log(\bar{b}).
\end{equation}
At least formally, \eqref{eq:q1} can then be rewritten in the more familiar form
\begin{equation} \label{eq:q-area}
q^{\delta E \cdot A} (\bar{b} \cdot A) = \exp\big(\!-\!\textstyle\int_A \omega_{\bar{E},q,\bar{b}}\big).
\end{equation}
We will often encounter the derivative of \eqref{eq:log-area}, namely
\begin{equation} \label{eq:derivative-area}
-\!\partial_q [\omega_{\bar{E},q,\bar{b}}] = q^{-1}[\delta E] + (\partial_q \bar{b})/\bar{b} \in H^2(\bar{E}) \otimes q^{-1}\bC[[q]].
\end{equation}
At this point, we want to restrict the generality a little, since that will simplify some of the expressions later on. For expository reasons, we have introduced \eqref{eq:b0-field} before \eqref{eq:b1-field}. However, we will assume throughout the rest of the paper that \eqref{eq:b0-field} is trivial, and correspondingly specialize \eqref{eq:b1-field} to 
\begin{equation} \label{eq:bulk}
\bar{b} \in H^2(\bar{E};1+q\bC[[q]]) \subset H^2(\bar{E};\bC[[q]]^\times).
\end{equation}
As a consequence, only parameter changes \eqref{eq:reparametrize-q} with $\beta'(0) = 1$ will be allowed.


Instead of working with the entire Fukaya categories, we usually find it more practical to fix a basis of Lefschetz thimbles $\{L_1,\dots,L_m\}$, and to consider the resulting full $A_\infty$-subcategories
\begin{align}
& \scrA \subset \scrF(\pi), \label{eq:a-subcategory} \\
& \scrA_{q,\bar{b}} \subset \scrF_{q,\bar{b}}(\bar\pi). \label{eq:aq-subcategory}
\end{align}
It is known \cite{seidel04} that the embedding \eqref{eq:a-subcategory} induces an equivalence of derived categories. As a consequence, the restriction map on Hochschild cohomology,
\begin{equation} \label{eq:restrict-hh}
\mathit{HH}^*(\scrF(\pi),\scrF(\pi)) \longrightarrow \mathit{HH}^*(\scrA,\scrA),
\end{equation}
is an isomorphism. This implies that the deformation $\scrF_{q,\bar{b}}(\bar\pi)$ is determined up to isomorphism by $\scrA_{q,\bar{b}}$.

\subsection{Fukaya categories of fibres\label{subsec:fukaya-2}}
Continuing in the previous setup, our basis of Lefschetz thimbles gives rise to a collection of vanishing cycles $\{V_1,\dots,V_m\}$ in the fibre $M$. Let 
\begin{equation} \label{eq:full}
\scrB \subset \scrF(M)
\end{equation}
be the full $A_\infty$-subcategory of the Fukaya category of the fibre formed by these cycles. Up to quasi-isomorphism, $\scrA$ can be identified with the directed subcategory of $\scrB$. If we make that identification on the nose, this means that
\begin{equation} \label{eq:directed}
\mathit{hom}_{\scrA}(L_i,L_j) = \begin{cases} 
\mathit{hom}_{\scrB}(V_i,V_j) & i<j, \\
\bC \cdot e_{L_i} \text{ (strict identity morphism)} & i=j, \\
0 & i>j,
\end{cases}
\end{equation}
with the $A_\infty$-structure of $\scrA$ being determined by that of $\scrB$ (and the requirement that the $e_{L_i}$ should be strict units). This can be useful for computing $\scrA$, since passing to the fibre reduces dimensions by $1$. One can also use the relationship in reverse, and consider $\scrA$ to be a first step towards determining $\scrB$. 

A similar relationship holds after fibrewise deformation. Consider the analogue of \eqref{eq:full} inside the relative Fukaya category of $(\bar{M},\delta M)$, twisted by the restriction of $\bar{b}$:
\begin{equation} \label{eq:bq-subcategory}
\scrB_{q,\bar{b}|\bar{M}} \subset \scrF_{q,\bar{b}|\bar{M}}(\bar{M}).
\end{equation}

\begin{lemma} \label{th:directed}
$\scrB_{q,\bar{b}|\bar{M}}$ has zero curvature (the $\mu^0$ term vanishes), and $\scrA_{q,\bar{b}}$ is quasi-isomorphic to its directed $A_\infty$-subcategory.
\end{lemma}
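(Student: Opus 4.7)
\emph{Strategy.} At $q=0$, both assertions are built into the construction of the Fukaya category of a Lefschetz fibration from \cite{seidel04}: each $V_i$ is exact in the Liouville manifold $M$ and bounds no holomorphic discs, so $\mu^0_{V_i}=0$; and $\scrA$ is arranged, via a standard perturbation convention in the base $\bC$, to be the directed subcategory of $\scrB$. The content of the lemma is that the new holomorphic discs created by the compactification do not spoil either statement.

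\emph{Vanishing of curvature.} For the first assertion, my plan is to use the Lefschetz thimble $L_i$ as a geometric witness to the unobstructedness of $V_i$. Inside $\scrA_{q,\bar b}$, the directedness built into the definition forces $\mathit{hom}(L_i,L_i)=\bC\cdot e_{L_i}$ to be concentrated in degree zero, so there is no room for a nonzero $\mu^0_{L_i}$. To transfer this to $V_i$, I would pick an almost complex structure on $\bar E$ that is fibered over a neighbourhood of the defining arc $\gamma_i\subset\bC$ and compatible with $\delta E$. Holomorphic discs in $\bar E$ with boundary on $L_i$ then project to holomorphic discs in $\bC$ with boundary in $\gamma_i$, and by the open mapping theorem these projections must be constant; every such disc is therefore contained in a single fibre $\bar M_z$, with boundary on $L_i\cap \bar M_z$. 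The moduli spaces of these vertical discs, as $z$ varies over $\gamma_i$, fibre together into a one-parameter family whose slice at an interior regular value $z_0$ computes $\mu^0_{V_i}$ (after parallel transport $L_i\cap\bar M_{z_0}\cong V_i$), and whose slice at the critical endpoint is empty because $L_i$ collapses to a point. Together with the $q^{\delta E\cdot u}(\bar b\cdot u)$-weighting, which is constant along the family, this cobordism yields $\mu^0_{V_i}=0$.

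\emph{Directed structure and main obstacle.} For the second assertion, once the fibre subcategory is uncurved, the directed structure of $\scrA_{q,\bar b}$ extends mechanically: the base-directed perturbation scheme that makes the hom-complexes of $\scrA$ upper-triangular is independent of the fibrewise divisor data, and the same fibered-$J$ argument applied to moduli of perturbed strips and polygons shows that the operations $\mu^k$ on $\scrA_{q,\bar b}$ reproduce those of the directed subcategory of $\scrB_{q,\bar b|\bar M}$ via parallel transport to $\bar M$. The expected main obstacle is the analysis of the cobordism for $\mu^0_{V_i}$: transversality for discs with boundary on the non-compact Lagrangian $L_i$ within the restricted class of fibered almost complex structures, exclusion of escape to infinity along $\gamma_i$ by energy estimates, and careful accounting of the boundary strata coming from breaking along $\delta E$, which must assemble into an $A_\infty$-consequence of $\mu^0_{L_i}=0$ rather than contribute separately.
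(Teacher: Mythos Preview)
The paper's justification for $\mu^0=0$ is quite different from yours and avoids any cobordism. It is a grading argument: each $V_i$ is an $(n-1)$-sphere, and since $\bar M$ is Calabi-Yau the curvature $\mu^0_{V_i}$ must land in $\mathit{CF}^2(V_i,V_i)$. For $n\neq 2$ the paper asserts (invoking the formalism of \cite{fooo}) that one can choose a representative of $\scrB_{q,\bar b|\bar M}$ in which there are no degree-$2$ self-morphisms on the cochain level; for $n=2$ it points to the direct disc analysis of \cite{seidel03b}. The paper also stresses that $\scrB_{q,\bar b|\bar M}$ is only well-defined up to isomorphism of deformations of $\scrB$, so the statement is really about the \emph{existence} of an uncurved representative --- a framing your proposal does not engage with.

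Your cobordism over $\gamma_i$ has a genuine gap at the critical endpoint. The claim ``the slice at the critical endpoint is empty because $L_i$ collapses to a point'' is not justified: as $z$ tends to the critical value the fibre acquires a node and the vanishing cycle shrinks, but a family of holomorphic discs with boundary on the shrinking $V_i$ need not disappear --- it can Gromov-converge to a (possibly reducible) holomorphic sphere in the nodal fibre passing through the singular point. That sphere contribution is not obviously zero, so your one-parameter family of disc moduli is not a closed cobordism, and the conclusion $\mu^0_{V_i}=0$ does not follow. Relatedly, your opening observation that ``$\mu^0_{L_i}=0$ by directedness'' is purely definitional --- the directed category \emph{declares} $\hom_{\scrA_{q,\bar b}}(L_i,L_i)=\bC\cdot e_{L_i}$ rather than computing it --- so it carries no geometric information about discs bounded by the thimble and cannot be ``transferred'' to $V_i$ in the way you propose. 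Even setting this aside, your fibered-$J$ scheme would face transversality issues for the vertical discs (as maps into $\bar E$), and perturbing to achieve transversality typically destroys the verticality on which the whole argument rests.
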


Strictly speaking, $\scrB_{q,\bar{b}|\bar{M}}$ is well-defined only up to isomorphism (of deformations of $\scrB$). The statement says that there are representatives with vanishing $\mu^0$. For $n \neq 2$, the existence of such a representative is clear from homological algebra (the objects are spheres, and one can arrange that they have no endomorphisms of degree $2$ on the cochain level; see \cite{fooo} for the general formalism). For $n = 2$, it follows from basic considerations involving holomorphic discs, as in \cite{seidel03b}. Only for such representatives can the directed $A_\infty$-subcategory be defined as in \eqref{eq:directed}.

\begin{remark} \label{th:split-generation}
In the case where the Lefschetz fibration comes from an anticanonical Lefschetz pencil, it is known \cite[Proposition 19.7]{seidel04} that $\scrB$ split-generates $\scrF(M)$. The analogous question for \eqref{eq:bq-subcategory} comes in several different versions. To take one of them, let 
\begin{equation} \label{eq:algebraic-closure}
\overline{\bC((q))} \supset \bC((q))
\end{equation}
be the algebraic closure of $\bC((q))$, obtained by adjoining roots $q^{1/d}$ of arbitrary order. There is a version of the Fukaya category of $\bar{M}$ (no longer relative to $\delta M$) defined over $\overline{\bC((q))}$ \cite{fooo}, and a $\bar{b}|\bar{M}$-twisted generalization. Then, that category is split-generated by its full $A_\infty$-subcategory $\scrB_{q,\bar{b}} \otimes_{\bC[[q]]} \overline{\bC((q))}$. The proof uses the long exact sequence from \cite{oh11}; see also \cite[Corollary 9.6]{seidel03b} for the four-dimensional case (both results are stated for trivial $\bar{b}$, but should generalize easily).
\end{remark}

\subsection{Gromov-Witten invariants\label{subsec:gromov-witten}}
For $A \in H_2(\cornerbar{E};\bZ)$ and $d \geq 0$, consider the Gromov-Witten invariant counting genus $0$ curves with $d$ marked points in class $A$, which is a map
\begin{equation} \label{eq:z-map}
\langle \cdots \rangle_A: H^*(\cornerbar{E})^{\otimes d} \longrightarrow \bC.
\end{equation}
The number $\langle x_1,\dots,x_d \rangle_A$ is nonzero only if the degrees satisfy
\begin{equation} \label{eq:z-map-degree}
|x_1| + \cdots + |x_d| = 2n + 2(d-3) + 2\bar{M} \cdot A.
\end{equation}
The contribution of constant maps ($A=0$) is
\begin{equation}
\langle x_1,\dots, x_d \rangle_0 = \begin{cases} \int_{\cornersubbar{E}} x_1x_2x_3 & d = 3, \\
0 & d \neq 3. \end{cases}
\end{equation}
The ``fundamental class axiom'' and ``divisor axiom'' (these go back to \cite{kontsevich-manin94} in the algebro-geometric context) say that for $A \neq 0$,
\begin{align} 
\label{eq:fundamental-class-axiom}
& \langle 1,x_2,\dots,x_d \rangle_A = 0, \\
\label{eq:divisor-axiom}
& \langle x_1,\dots,x_d \rangle_A =(x_1 \cdot A) \langle x_2,\dots, x_d \rangle_A \quad \text{if $|x_1| = 2$.}
\end{align}
Since \eqref{eq:z-map} counts stable maps representing $A$, it is zero unless either $A = 0$ or $\int_A \omega_{\cornersubbar{E}} > 0$ (this is sometimes called the ``effectiveness axiom''). Note that by deforming the symplectic form, one can make the constant $\lambda$ in \eqref{eq:inflation} arbitrarily large. Hence,
\begin{equation} \label{eq:effectiveness}
\langle \cdots \rangle_A = 0 \;\; \left\{
\begin{aligned}
& \text{if }\bar{M} \cdot A < 0, \\
& \text{or if } \bar{M} \cdot A = 0, \; \delta E| \cdot A \leq 0, \text{ and } A \neq 0.
\end{aligned}
\right.
\end{equation}
The final statement we need is more specific to our geometric situation:
\begin{equation} \label{eq:exceptional-sphere}
\text{If $\delta E| \cdot A< 0$, then $\langle \cdots \rangle_A = 0$ unless $A$ lies in the image of $H_2(\delta E|) \rightarrow H_2(\cornerbar{E})$.}
\end{equation}
Similar (actually more general) properties of Gromov-Witten invariants of blowups were derived in the algebro-geometric context in \cite{gathmann01, hu00, bayer05}.

When summing over classes $A$, we use a formal parameter $q$ as before, and allow a bulk term
\begin{equation} \label{eq:cornerbulk}
\cornerbar{b} \in H^2(\cornerbar{E};1+q\bC[[q]]) \subset H^2(\cornerbar{E};\bC[[q]]^\times).
\end{equation}
Concretely, this means that we set
\begin{equation} \label{eq:a-b-sum}
\langle x_1,\dots,x_d \rangle_{q,\cornersubbar{b}} = \sum_A  q^{\delta E| \cdot A} (\cornerbar{b} \cdot A) \, \langle x_1,\dots,x_d \rangle_A  \in \bC((q)).
\end{equation}
In analogy with \eqref{eq:log-area}--\eqref{eq:derivative-area}, one can formally write the weights in \eqref{eq:a-b-sum} as
\begin{equation} \label{eq:counting-curves}
q^{\delta E| \cdot A} (\cornerbar{b} \cdot A) = \exp\big(\textstyle\!-\!\int_A \omega_{\cornersubbar{E},q,\cornersubbar{b}}\big),
\end{equation}
where
\begin{equation} \label{eq:subbar}
[\omega_{\cornersubbar{E},q,\cornersubbar{b}}] = -\log(q)[\delta E|] - \log(\cornerbar{b})
\end{equation}
and hence
\begin{equation} \label{eq:derivative-corner}
-\!\partial_q [\omega_{\cornersubbar{E},q,\cornersubbar{b}}] = q^{-1}[\delta E|] + (\partial_q \cornerbar{b}) / \cornerbar{b} \in H^*(\cornerbar{E}) \otimes q^{-1}\bC[[q]].
\end{equation}
One difference with respect to \eqref{eq:q-area}, is that $[\delta E|]$ is not actually the cohomology class of the symplectic form, which is why negative powers of $q$ can occur in \eqref{eq:a-b-sum}. However, the difference $[\delta E|]-[\omega_{\cornersubbar{E}}]$ is a multiple of the first Chern class $[\bar{M}]$, by \eqref{eq:inflation}, and that is enough to ensure the $q$-adic convergence of \eqref{eq:a-b-sum}. As a consequence of the divisor axiom, we have 
\begin{equation} \label{eq:insertion}
\langle \!\partial_q [\omega_{\cornersubbar{E},q,\cornersubbar{b}}], x_1, \dots, x_d \rangle_{q,\cornersubbar{b}} + \partial_q  \langle x_1,\dots,x_d \rangle_{q,\cornersubbar{b}} = 0 \quad \text{for $d \neq 2$.}
\end{equation}
The class $[\bar{M}]$ also plays a distinguished role, because of \eqref{eq:z-map-degree}: if $I$ is the grading operator
\begin{equation} \label{eq:grading-operator}
I(x) = \textstyle{\frac{|x|}{2}} x,
\end{equation}
then
\begin{equation} \label{eq:scaling}
\begin{aligned}
& \langle [\bar{M}], x_1,\dots,x_d \rangle_{q,\cornersubbar{b}} + (n+d-3) \langle x_1,\dots, x_d\rangle_{q,\cornersubbar{b}} \\ & \qquad = 
\textstyle \langle I x_1,\dots,x_d \rangle_{q,\cornersubbar{b}} + \cdots +
\langle x_1,\dots, I x_d \rangle_{q,\cornersubbar{b}} \quad \text{for $d \neq 2$.}
\end{aligned}
\end{equation}
In both \eqref{eq:insertion} and \eqref{eq:scaling}, $d = 2$ is excluded because of the contribution of constant maps ($A = 0$) to the left hand side. Usually, we will rewrite the Gromov-Witten invariants (Poincar{\'e} dually) as multilinear maps
\begin{equation} \label{eq:z-multilinear}
z_{q,\cornersubbar{b}}: H^*(\cornerbar{E})((q))^{\otimes d-1} \longrightarrow H^*(\cornerbar{E})((q)).
\end{equation}
We further break them up into graded pieces $z^{(k)}_{q,\cornersubbar{b}}$, by restricting the summation to classes $A$ with $\bar{M} \cdot A = k$. 

In the simplest case $d = 1$, we get a class
\begin{equation} \label{eq:z-class}
z_{q,\cornersubbar{b}} \in H^*(\cornerbar{E})((q)).
\end{equation}
Because of \eqref{eq:effectiveness}, this has only three nontrivial graded components, which have the following more precise form:
\begin{align}
\label{eq:z0} & z^{(0)}_{q,\cornersubbar{b}} \in H^4(\cornerbar{E}) \otimes q\bC[[q]], \\
\label{eq:z1} & z^{(1)}_{q,\cornersubbar{b}} \in q^{-1} [\delta E|] + H^2(\cornerbar{E})[[q]], \\
\label{eq:z2} & z^{(2)}_{q,\cornersubbar{b}} \in H^0(\cornerbar{E};\bC[[q]]).
\end{align}
Geometrically, \eqref{eq:z0} counts pseudo-holomorphic curves in the fibres. Again due to \eqref{eq:effectiveness}, only positive powers of $q$ appear in it. Next, \eqref{eq:z1} counts ``sections''. Due to \eqref{eq:exceptional-sphere}, the only classes $A$ that can contribute with negative powers of $q$ come from $H_2(\delta E|) \iso H_2(\bC P^1) \oplus H_2(\delta M)$. Using \eqref{eq:fibrewise-minus-one}, one sees that the only such contribution comes from the generator of $H_2(\bC P^1)$, which geometrically means from ``trivial sections''
\begin{equation} \label{eq:trivial-section}
\bC P^1 \times \{\mathit{point\}} \subset \bC P^1 \times \delta M = \delta E| \subset \cornerbar{E}.
\end{equation}
This explains the form of the leading order term in \eqref{eq:z1}. Finally, \eqref{eq:z2} counts ``bisections'', and a similar argument shows that no negative powers of $q$ appear in it.

The special case $d = 3$ of \eqref{eq:z-multilinear} defines the (small) quantum product, usually written as
\begin{equation} \label{eq:quantum-product}
x_1 \ast x_2 = z_{q,\cornersubbar{b}}(x_1,x_2).
\end{equation}
As a consequence of the general WDVV relation in Gromov-Witten theory, this is associative. More specifically for our situation, the divisor axiom implies that
\begin{equation}
\label{eq:m-squared}
[\bar{M}] \ast [\bar{M}] = z^{(1)}_{q,\cornersubbar{b}} + 4 z^{(2)}_{q,\cornersubbar{b}}.
\end{equation}

There is a relation between the quantum product structure and counting holomorphic discs (with Lagrangian boundary conditions). Let $L \subset E$ be a closed Lagrangian submanifold, which is exact, graded, and {\em Spin}. This is an object of the Fukaya category of $E$, which is a full subcategory of $\scrF(\pi)$. Suppose that $L$ admits a deformation to an unobstructed object of the Fukaya category of $\bar{E}$ relative to $\delta E$, twisted by $\bar{b}$, which is a full subcategory of $\scrF_{q,\bar{b}}(\bar{\pi})$. Then, by counting holomorphic discs with boundary on $L$ which go (exactly once) through the fibre at infinity, we get an invariant
\begin{equation} \label{eq:disc-counting}
W_{L,q,\cornersubbar{b}} \in H^0(\mathit{hom}_{\scrF_{q,\bar{b}}(\bar\pi)}(L,L)) = \mathit{HF}^0_{\bar{E},\bar{b}}(L,L) = \bC[[q]].
\end{equation}
The constant ($q^0$) term counts Maslov index $2$ discs in $(\roundbar{E},L)$, as in \cite{auroux07}. Conversely, one can view \eqref{eq:disc-counting} as an extension of the idea from \cite{auroux07}, which includes higher Maslov index holomorphic discs in $\roundbar{E}$ with tangency conditions to the base locus of the pencil. The (Poincar{\'e} dual) class $[L] \in H^n(\cornerbar{E})$ satisfies
\begin{align} \label{eq:eigenvalue-1}
& [\bar{M}] \ast [L] = W_{L,q,\cornersubbar{b}} \, [L], \\
& (\partial_q[\omega_{\cornersubbar{E},q,\cornersubbar{b}}]) \ast [L] + (\partial_q W_{L,q,\cornersubbar{b}}) \, [L] = 0. \label{eq:eigenvalue-2}
\end{align}
These statements are consequences of the formalism of open-closed string maps (see \cite[Section 3.8]{fooo}, and in particular \cite[Equation (3.8.13.2)]{fooo}). The second one could be thought of as an analogue of \eqref{eq:insertion} for disc-counting.

\begin{remark} \label{th:bar-e-lagrangian}
Although that is technically more difficult, it may be possible to extend \eqref{eq:disc-counting} to more general closed Lagrangian submanifolds in $\bar{E}$. As in Remark \ref{th:split-generation}, let's consider the version of the Fukaya category of $\bar{E}$ whose coefficient field is \eqref{eq:algebraic-closure}. Nontrivial objects of that category should have disc-counting invariants $W_{L,q,\cornersubbar{b}} \in \overline{\bC((q))}$, and \eqref{eq:eigenvalue-1}, \eqref{eq:eigenvalue-2} ought to generalize accordingly (as a consequence, if $[L]$ is nonzero, $W_{L,q,\cornersubbar{b}}$ would have to lie in $\bC((q))$ after all, because it can be expressed in terms of Gromov-Witten invariants).
%
\end{remark}

\subsection{Homological algebra\label{subsec:nc-divisors}}
Take an $A_\infty$-algebra $\scrA$ over $\bC$. A noncommutative divisor on $\scrA$ \cite[Section 2c]{seidel14} is given by an $\scrA$-bimodule $\scrP$ which is invertible (with respect to tensor product, up to quasi-isomorphism), together with an $A_\infty$-algebra structure on
\begin{equation} \label{eq:b-from-a}
\scrB = \scrA \oplus \scrP[1], 
\end{equation}
such that: $\scrA \subset \scrB$ is an $A_\infty$-subalgebra; and if we consider $\scrB$ as an $\scrA$-bimodule, then the induced bimodule structure on $\scrP[1] = \scrB/\scrA$ is the previously given one. The first new piece of information, beyond $\scrA$ and $\scrP$, contained in the noncommutative divisor is a bimodule map (which we call the leading order term of the divisor)
\begin{equation} \label{eq:theta-class}
\theta \in H^0(\mathit{hom}_{(\scrA,\scrA)}(\scrP, \scrA)).
\end{equation}
Here, $(\scrA,\scrA)$ stands for the $A_\infty$ (in fact dg) category of $\scrA$-bimodules. One can think of \eqref{eq:theta-class} as the connecting map in the short exact sequence of $\scrA$-bimodules
\begin{equation}
0 \rightarrow \scrA \longrightarrow \scrB \longrightarrow \scrP[1] \rightarrow 0.
\end{equation}
More explicitly, the components of a cochain representative of $\theta$ are obtained as follows:
\begin{equation}
\scrA^{\otimes j} \otimes \scrP[1] \otimes \scrA^{\otimes i} \hookrightarrow
\scrB^{\otimes i+j+1} \xrightarrow{\mu^{i+j+1}_{\scrB}} \scrB[2-d] \xrightarrow{\text{projection}}
\scrA[2-d].
\end{equation}
It is useful to introduce a weight grading on $\scrB$, with respect to which $\scrA$ has weight $0$ and $\scrP$ has weight $-1$. The entire $A_\infty$-structure $\mu_{\scrB}^*$ is nondecreasing with respect to weights. The part that strictly respect weights consists precisely of the $A_\infty$-algebra structure of $\scrA$, and the bimodule structure of $\scrP$. The leading order term is part of the next (weight $1$) piece. There is also, for fixed $(\scrA,\scrP)$, a notion of isomorphism of noncommutative divisors. Two isomorphic divisors determine isomorphic $A_\infty$-algebra structures on $\scrB$.

\begin{remark} \label{th:rescale}
One can rescale a given noncommutative divisor by multiplying the weight $i$ part with $\lambda^i$, for some nonzero constant $\lambda$. This does not (generally) preserve its isomorphism class as a noncommutative divisor, but it preserves the isomorphism class of $\scrB$ as an $A_\infty$-algebra.
\end{remark}

\begin{lemma}[\protect{\cite[Lemma 2.8]{seidel14b}}] \label{th:theta-is-enough}
Suppose that the following holds:
\begin{equation} \label{eq:no-negative}
\text{For all $i \geq 1$, $H^*(\hom_{(\scrA,\scrA)}(\scrP^{\otimes_{\scrA} i}, \scrA))$ vanishes if $* < 0$.}
\end{equation}
Then the isomorphism class of a noncommutative divisor is entirely determined by \eqref{eq:theta-class}. \label{th:classification-theory-of-divisors}
\end{lemma}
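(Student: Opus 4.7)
The plan is to argue by obstruction theory along the weight filtration. Given two noncommutative divisors $\scrB_1, \scrB_2$ on the same pair $(\scrA, \scrP)$ with the same leading order term $\theta$, I would construct an $A_\infty$-isomorphism $f: \scrB_1 \to \scrB_2$ whose weight-$0$ component is the identity, proceeding weight by weight.

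First I would set up a dictionary between weight-$k$ pieces of an $A_\infty$-structure (or $A_\infty$-homomorphism) on $\scrB = \scrA \oplus \scrP[1]$ and bimodule cochains. A weight-$k$ component of $\mu_\scrB$ splits into an $\scrA$-output part, which is a cochain in $\hom_{(\scrA,\scrA)}(\scrP^{\otimes_\scrA k}, \scrA)$, and a $\scrP$-output part in $\hom_{(\scrA,\scrA)}(\scrP^{\otimes_\scrA k+1}, \scrP)$; using the invertibility of $\scrP$, the latter is identified (after tensoring with $\scrP^{-1}$) with a complex of the same form as in \eqref{eq:no-negative}. Keeping careful track of the two shifts---the standard Hochschild-cochain convention, and the extra shift from re-interpreting $\scrP[1]$-inputs as $\scrP$-inputs---I expect to find that the relevant cohomological degree is strictly negative whenever $(k, \text{output}) \neq (1, \scrA)$; in the exceptional case the degree is $0$, and the class recovered is precisely $\theta$.

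The inductive step is then routine. Assume $f$ has been arranged so that the $A_\infty$-homomorphism equation holds modulo weight $\geq k$; the weight-$k$ failure is automatically a cocycle $\xi_k$, which splits as above. For $k = 1$, the class of the $\scrA$-output part of $\xi_1$ is $\theta(\scrB_1) - \theta(\scrB_2) = 0$ by hypothesis; every other component of $\xi_k$ (including the $\scrP$-output part at $k = 1$) lies in some $H^{<0}(\hom_{(\scrA,\scrA)}(\scrP^{\otimes_\scrA i}, \scrA))$ with $i \geq 1$, and vanishes by \eqref{eq:no-negative}. Hence $\xi_k$ is a coboundary, and a primitive supplies the weight-$k$ correction of $f$. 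Iterating produces the desired isomorphism.

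The main obstacle will be notational rather than conceptual: verifying the degree formula, and the claim that all relevant obstructions (save for the $\scrA$-output class at weight $1$) lie in strictly negative cohomological degrees, requires meticulous bookkeeping of the shifts inherent in $A_\infty$-bimodule conventions and bar resolutions. Once those conventions are fixed, the argument reduces to a routine Maurer-Cartan-style calculation.
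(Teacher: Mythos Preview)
Your approach is essentially the paper's (see Application~\ref{th:divisor-cl}): obstruction theory along the weight filtration on Hochschild cochains of $\scrB$, with the weight-$k$ graded piece controlled by the bimodule hom spaces in \eqref{eq:no-negative}. The paper packages this as Maurer-Cartan theory in the pronilpotent dg Lie algebra $\frakg \subset \mathit{CC}^*(\scrB,\scrB)[1]$ of weight-increasing cochains, and then invokes Lemma~\ref{th:linear-def-2}; your hands-on construction of the isomorphism is a direct unpacking of that.

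One point to sharpen: the decomposition of a weight-$k$ cochain by output type ($\scrA$ versus $\scrP[1]$) is only a two-step filtration, not a direct sum of subcomplexes, because the Hochschild differential (built from the weight-$0$ part of $\mu_\scrB$, i.e.\ the $\scrA$-structure and the $\scrP$-bimodule structure) can carry an $\scrA$-output cochain to a $\scrP$-output cochain. So the obstruction class $[\xi_k]$ does not split into two independent classes as you wrote; rather, the two pieces sit in the long exact sequence \eqref{eq:divisor-les}. The vanishing you need, $H^1(\frakg^{(k)}) = 0$ for $k \geq 2$, then follows because the two adjacent terms $H^{1-2k}$ and $H^{2-2k}$ both lie in negative degree and vanish by \eqref{eq:no-negative}. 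This is a cosmetic fix: once you phrase the degree check as a long exact sequence argument rather than a splitting, your proof goes through.
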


There is also a corresponding theory of formal deformations of noncommutative divisors (where $\scrA$ and $\scrP$ remain fixed). This is given by a deformation $\scrB_q$ of the $A_\infty$-algebra structure on $\scrB$, which satisfies the same properties as before (and in particular has vanishing $\mu^0$ term). Such a deformation has a leading order term
\begin{equation} \label{eq:theta-q-class}
\theta_q \in H^0(\mathit{hom}_{(\scrA,\scrA)}(\scrP,\scrA))[[q]].
\end{equation}
The counterpart of Lemma \ref{th:theta-is-enough} is:

\begin{lemma} \label{th:q-determines}
If \eqref{eq:no-negative} holds, the isomorphism class of a deformation of noncommutative divisors is entirely determined by \eqref{eq:theta-q-class}. 
\end{lemma}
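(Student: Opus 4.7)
The strategy is to reduce Lemma \ref{th:q-determines} directly to Lemma \ref{th:theta-is-enough} by a base change from $\bC$ to $\bC[[q]]$. Concretely, a deformation of noncommutative divisors over $\bC[[q]]$ can be repackaged as a noncommutative divisor, in the $\bC[[q]]$-linear sense, on
\[
\scrB_q = \scrA_q \oplus \scrP_q[1], \qquad \scrA_q := \scrA \otimes_\bC \bC[[q]], \quad \scrP_q := \scrP \otimes_\bC \bC[[q]],
\]
where $\scrA_q$ has its (undeformed) $\bC[[q]]$-linear $A_\infty$-structure and $\scrP_q$ its (undeformed) bimodule structure, while the deformed data live in the positive-weight parts of $\mu^*_{\scrB_q}$. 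The leading order term of this $\bC[[q]]$-linear noncommutative divisor is precisely \eqref{eq:theta-q-class}. Thus it suffices to establish the $\bC[[q]]$-linear analogue of Lemma \ref{th:theta-is-enough} and to check that the hypothesis \eqref{eq:no-negative} propagates to the base change.

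The base-change step is formal: since $\bC[[q]]$ is flat over $\bC$, the bar-type complex computing $\hom_{(\scrA_q,\scrA_q)}(\scrP_q^{\otimes_{\scrA_q} i}, \scrA_q)$ is obtained from the corresponding $\bC$-linear complex by $- \, \widehat{\otimes}_\bC \, \bC[[q]]$, and its cohomology vanishes in negative degrees because the original cohomology does (completion preserves that vanishing). The $\bC[[q]]$-linear version of Lemma \ref{th:theta-is-enough} is then proved exactly as in \cite[Lemma 2.8]{seidel14b}: one filters $\scrB_q$ by the weight grading of \eqref{eq:b-from-a} and constructs, inductively in weight, an $A_\infty$-isomorphism intertwining two divisor structures with cohomologous leading terms. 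At the $(i{+}1)$-st inductive step, the remaining discrepancy at weight $i+1$ is closed as a bimodule map $\scrP_q^{\otimes_{\scrA_q}(i+1)} \to \scrA_q$; the obstruction to killing it by an isomorphism with $(i{+}1)$-st weight component a bimodule map $\scrP_q^{\otimes_{\scrA_q}(i+1)} \to \scrA_q[-1]$ lies in $H^{-1}(\hom_{(\scrA_q,\scrA_q)}(\scrP_q^{\otimes_{\scrA_q}(i+1)}, \scrA_q))$, which vanishes by the previous paragraph.

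The main technical point — and the only step that is not purely formal — is verifying that the $A_\infty$-relations for $\scrB_q$ genuinely decouple under the weight grading in the manner required, so that the induction actually reduces at each stage to a cocycle/coboundary question for $\hom_{(\scrA_q,\scrA_q)}(\scrP_q^{\otimes_{\scrA_q} i}, \scrA_q)$. This is precisely what is checked in the static ($\bC$-linear) setting of \cite{seidel14b}, and the argument is insensitive to the ground ring beyond the vanishing hypothesis, so it carries over without modification. Once this bookkeeping is in place, Lemma \ref{th:q-determines} follows by applying the inductive construction over $\bC[[q]]$ to any two deformations sharing the same $\theta_q$.
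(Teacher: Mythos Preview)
Your approach is correct and is essentially the one the paper takes: in the Appendix (Application~\ref{th:divisor-cl} together with Lemma~\ref{th:q-def-2}) the paper identifies deformations of noncommutative divisors with Maurer-Cartan elements in $\frakg[[q]]$, where $\frakg\subset\mathit{CC}^*(\scrB,\scrB)[1]$ is the subspace of weight-increasing Hochschild cochains, and then runs exactly the filtered induction you describe, with the obstructions vanishing by \eqref{eq:no-negative} through the long exact sequence \eqref{eq:divisor-les}. Your phrasing ``redo Lemma~\ref{th:theta-is-enough} over $\bC[[q]]$'' is that same argument without the Maurer-Cartan packaging.

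One bookkeeping correction: the discrepancy at weight $i{+}1$ is not a single degree-$0$ bimodule map $\scrP^{\otimes(i+1)}\to\scrA$, and the obstruction does not lie in $H^{-1}$. A weight-$(i{+}1)$ Hochschild cochain has components with target in $\scrA$ (and $i{+}1$ copies of $\scrP$ in the source) and with target in $\scrP$ (and $i{+}2$ copies), and by \eqref{eq:divisor-les} the relevant obstruction group $H^1(\frakg^{(i+1)})$ is bracketed by $\hom_{(\scrA,\scrA)}(\scrP^{\otimes_{\scrA}(i+1)},\scrA)$ in degrees $-1-2i$ and $-2i$. These are still negative for $i\geq 1$, so \eqref{eq:no-negative} does what you want, but the degrees you wrote are those of the weight-$1$ step rather than the general one.
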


In fact, one can retain some control over the $q$-series that appear. Given a $\bC$-linear subspace $V \subset \bC[[q]]$, let 
\begin{equation} \label{eq:v-subalgebra}
S^i[V] \subset \bC[[q]]
\end{equation}
be its $i$-th symmetric product as a subspace, which means those formal power series in $q$ that can be written as homogeneous degree $i$ polynomials in the elements of $V$. Similarly, let $S[V] \subset \bC[[q]]$ be the direct sum of all the \eqref{eq:v-subalgebra}. We say that a deformation of noncommutative divisors is ``defined over $S[V]$'' if, for each $i$, the weight $i$ part of the $A_\infty$-structure of $\scrB_q$ lies in 
\begin{equation}
\Big(\prod_d \mathit{Hom}(\scrB^{\otimes d},\scrB)\Big) \otimes S^i[V].
\end{equation}

\begin{lemma} \label{th:v-restriction}
Suppose that \eqref{eq:no-negative} holds, and that we have a deformation of noncommutative divisors whose leading order term \eqref{eq:theta-q-class} lies in $H^0(\mathit{hom}_{(\scrA,\scrA)}(\scrP,\scrA)) \otimes V$, for some $V$. Then, within the same isomorphism class, there is a deformation which is defined over $S[V]$.
\end{lemma}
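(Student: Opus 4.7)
My plan is to iteratively transform the given deformation $\scrB_q$ by isomorphisms of deformations of noncommutative divisors, so that after the transformations, every weight-$i$ component of the $A_\infty$-structure lies in $\hom\otimes S^i[V]$. Proceeding by induction on $i\geq 1$, suppose that $\mu^{(j)}_{\scrB_q}\in\hom\otimes S^j[V]$ has been arranged for all $j<i$. The weight-$i$ piece of the $A_\infty$-relation then reads
\begin{equation*}
\partial\mu^{(i)}_{\scrB_q} \;=\; -\!\!\sum_{\substack{j_1+j_2=i \\ j_1,j_2\geq 1}} \mu^{(j_1)}_{\scrB_q}\diamond\mu^{(j_2)}_{\scrB_q},
\end{equation*}
and the inductive hypothesis, together with the submultiplicativity $S^{j_1}[V]\cdot S^{j_2}[V]\subset S^{j_1+j_2}[V]$, places the right-hand side in $\hom^1\otimes S^i[V]$. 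Fix any $\bC$-linear complement $\bC[[q]]=S^i[V]\oplus W_i$ and split $\mu^{(i)}_{\scrB_q}=\mu^{(i)}_S+\mu^{(i)}_W$. Because the right-hand side has no $W_i$-component, $\mu^{(i)}_W$ is a Hochschild cocycle in $\hom^0\otimes W_i$, and the induction will be completed once a weight-$i$ gauge transformation eliminates it, without disturbing lower weights.

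The required gauge exists as soon as $[\mu^{(i)}_W]=0$ in $H^0(\hom)\otimes W_i$. I would prove this by establishing the stronger assertion that the full cohomology class $[\mu^{(i)}_{\scrB_q}]\in H^0(\hom_{(\scrA,\scrA)}(\scrP^{\otimes_{\scrA}i},\scrA))\otimes\bC[[q]]$ lies in the subspace $H^0\otimes S^i[V]$. The key input is Lemma~\ref{th:q-determines}, applicable by \eqref{eq:no-negative}: the isomorphism class of $\scrB_q$ is determined by $\theta_q$ alone, and a weight-grading argument (in the spirit of Remark~\ref{th:rescale}, where multiplying the weight-$i$ piece by $\lambda^i$ preserves the algebra isomorphism class) shows that the class at weight $i$ depends on $\theta_q$ in an $i$-homogeneous fashion. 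Since $\theta_q\in H^0\otimes V$, this forces $[\mu^{(i)}_{\scrB_q}]\in H^0\otimes V^{\cdot i}\subset H^0\otimes S^i[V]$, so its $W_i$-component is zero.

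The main obstacle is making the $i$-homogeneous dependence of $[\mu^{(i)}_{\scrB_q}]$ on $\theta_q$ precise. The cleanest route is to rerun the existence/uniqueness argument underlying Lemma~\ref{th:q-determines} while tracking the $S[V]$-filtration on coefficients: at each step, the weight-$i$ part is produced (unambiguously modulo coboundary, thanks to the vanishing of $H^{<0}$) as a Hochschild cocycle assembled from the weight-$<i$ data, and the assembly preserves the bidegree in (weight, $S[V]$-filtration). Alternatively, one may bypass the tracking by constructing a parallel deformation $\scrB_q'$ directly over $S[V]$ with leading term $\theta_q$, inductively solving the weight-$i$ equation within $\hom\otimes S^i[V]$ (obstruction-free because the tensor decomposition $\bC[[q]]=S^i[V]\oplus W_i$ splits Hochschild cohomology, and the obstruction vanishes over $\bC[[q]]$ by the existence of $\scrB_q$), and then appealing to Lemma~\ref{th:q-determines} to conclude $\scrB_q\cong\scrB_q'$. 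Once either of these technical routes is carried out, the remainder of the proof is formal.
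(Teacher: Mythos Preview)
Your alternative route at the end --- build a $\scrB_q'$ directly over $S[V]$ with the given leading term $\theta_q$, then invoke uniqueness (Lemma~\ref{th:q-determines}) --- is exactly the paper's argument. In the language of the Appendix: condition~\eqref{eq:no-negative}, fed through the long exact sequence~\eqref{eq:divisor-les}, gives both $H^1(\frakg^{(i)})=0$ and $H^2(\frakg^{(i)})=0$ for all $i\geq 2$; the second vanishing provides unobstructed existence of a Maurer--Cartan element inside $\frakg[V]$ (Lemma~\ref{th:existence-in-v}), and the first provides uniqueness up to gauge.

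Your first route is salvageable but over-complicated, and contains a genuine slip. For $i\geq 2$ the element $\mu^{(i)}_{\scrB_q}$ is \emph{not} closed (its differential is the bracket sum of lower-weight pieces), so ``the full cohomology class $[\mu^{(i)}_{\scrB_q}]\in H^0(\hom_{(\scrA,\scrA)}(\scrP^{\otimes_{\scrA} i},\scrA))$'' is not a well-defined object; only the $W_i$-component $\mu^{(i)}_W$ is a cocycle. More to the point, you do not need any homogeneity argument to kill it. The cocycle $\mu^{(i)}_W$ lives in $\frakg^{(i),1}\otimes W_i$, and under~\eqref{eq:no-negative} one has $H^1(\frakg^{(i)})=0$ for $i\geq 2$ (this is the rigidity condition~\eqref{eq:degree-restriction-2} in the Appendix). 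So $\mu^{(i)}_W$ is automatically a coboundary, the gauge exists for free, and the induction closes without the ``stronger assertion''. The discussion of $i$-homogeneous dependence via Remark~\ref{th:rescale} is a red herring: rescaling only tells you how the structure transforms under $\theta_q\mapsto\lambda\theta_q$, not how it depends on $\theta_q$ in general, and in any case it is unnecessary here.
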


This result and the previous two are simple applications of abstract deformation theory; we refer to the Appendix for an explanation from that point of view.

\begin{corollary} \label{th:divide-out}
Suppose that \eqref{eq:no-negative} holds, and that we have a deformation of noncommutative divisors whose leading order term is of the form
\begin{equation} \label{eq:theta-rho-sigma}
\theta_q = t_1(q) \rho + t_2(q) \sigma,
\end{equation}
where $\rho,\sigma \in H^0(\hom_{(\scrA,\scrA)}(\scrP,\scrA))$, and $t_1,t_2 \in \bC[[q]]$, with $t_1(0) \neq 0$. Then, there is an $A_\infty$-algebra over a polynomial ring $\bC[t]$, which becomes quasi-isomorphic to $\scrB_q$ after the change of variables 
\begin{equation} \label{eq:t-variable}
t(q) = t_2(q)/t_1(q).
\end{equation}
\end{corollary}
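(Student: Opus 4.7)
The plan is to combine Lemma \ref{th:v-restriction} with the rescaling operation of Remark \ref{th:rescale}, applied to the two-dimensional subspace
\begin{equation}
V = \bC \cdot t_1(q) + \bC \cdot t_2(q) \subset \bC[[q]].
\end{equation}
By hypothesis $\theta_q \in H^0(\mathit{hom}_{(\scrA,\scrA)}(\scrP,\scrA)) \otimes V$, so assumption \eqref{eq:no-negative} allows me to invoke Lemma \ref{th:v-restriction} to obtain, within the given deformation class, a representative of $\scrB_q$ whose weight $i$ structure coefficients lie in $S^i[V]$. Concretely, $S^i[V]$ is spanned as a $\bC$-vector space by the monomials $t_1(q)^{i-j} t_2(q)^j$ with $0 \leq j \leq i$.

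Next, since $t_1(0) \neq 0$, the series $t_1(q)$ is a unit in $\bC[[q]]$. I would then rescale this representative using Remark \ref{th:rescale} with scalar $\lambda = t_1(q)^{-1}$. Although the remark is stated for constants $\lambda \in \bC^\times$, the same argument applies verbatim when $\lambda$ is any unit of the ambient coefficient ring, because the rescaling is realized by the $A_\infty$-automorphism of $\scrB_q$ which acts by $\lambda$ on the summand $\scrP[1]$ and by the identity on $\scrA$; this preserves the isomorphism class of the underlying $A_\infty$-algebra. After the rescaling, the weight $i$ component lies in
\begin{equation}
t_1(q)^{-i} S^i[V] \;=\; \mathrm{span}_{\bC}\bigl\{\, t_1(q)^{-i+j-j}\cdot (t_2(q)/t_1(q))^j : 0 \leq j \leq i \,\bigr\} \;=\; \mathrm{span}_{\bC}\bigl\{1, t, t^2, \ldots, t^i\bigr\},
\end{equation}
where $t = t(q) = t_2(q)/t_1(q)$. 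Hence every structure coefficient of this new representative is a polynomial in $t(q)$, of degree at most the weight of that coefficient.

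Step three is then formal: I define an $A_\infty$-algebra over $\bC[t]$ by taking exactly the same structure maps, but interpreting each occurrence of $t(q)$ as the abstract variable $t$. Substituting $t \mapsto t_2(q)/t_1(q)$ recovers the rescaled representative of $\scrB_q$, which is quasi-isomorphic to the original deformation. This produces the $A_\infty$-algebra claimed by the corollary.

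The main conceptual point, and the reason the result works, is the matching between the weight filtration and the homogeneity built into $S^i[V]$: precisely $i$ factors drawn from $V$ appear at weight $i$, so dividing by $t_1^i$ collapses them to polynomials of degree $\leq i$ in the ratio $t = t_2/t_1$. The only genuine technical point to verify is that Remark \ref{th:rescale} extends from constants to units of $\bC[[q]]$, which is immediate from writing out the $A_\infty$-equations for the rescaling automorphism; everything else is an immediate consequence of Lemma \ref{th:v-restriction}.
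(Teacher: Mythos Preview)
Your proposal is correct and follows essentially the same approach as the paper: combine the rescaling of Remark~\ref{th:rescale} (extended to units of $\bC[[q]]$) with Lemma~\ref{th:v-restriction}. The only difference is the order of operations: the paper first rescales by $t_1(q)^{-1}$, obtaining a noncommutative divisor with leading term $\rho + t(q)\sigma$, and then applies Lemma~\ref{th:v-restriction} with $V = \bC \cdot 1 + \bC \cdot t(q)$, which directly yields coefficients in $S^i[V] = \mathrm{span}\{1,t,\dots,t^i\}$; you instead apply Lemma~\ref{th:v-restriction} first with $V = \bC\cdot t_1 + \bC\cdot t_2$ and rescale afterward. Both orderings produce the same result, and the underlying content is identical. (Minor note: the displayed exponent $-i+j-j$ appears to be a slip; the intended computation $t_1^{-i}\cdot t_1^{i-j}t_2^j = (t_2/t_1)^j$ is of course correct.)
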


\begin{proof}
There is an analogue of Remark \ref{th:rescale} for deformations of noncommutative divisors: without changing the $A_\infty$-algebra structure of $\scrB_q$, one can replace \eqref{eq:theta-rho-sigma} with $\rho + t(q) \sigma$. Take $V \subset \bC[[q]]$ to be the $\bC$-linear subspace spanned by $1$ and $t(q)$. Lemma \ref{th:v-restriction} yields a deformation of noncommutative divisors isomorphic to $\scrB_q$, and such that only polynomials in $t$ occur as coefficients in its $A_\infty$-structure.
\end{proof}

As a final remark, note that even though we have formulated all of the notions above for $A_\infty$-algebras, they carry over to $A_\infty$-categories in an obvious way.

\section{Counting sections\label{sec:gw}}

\subsection{The fundamental assumption\label{subsec:formalism}}
Within Gromov-Witten theory, the class \eqref{eq:derivative-corner} plays a special role. We want to consider a situation in which that class (plus a multiple of the first Chern class) can itself be written as a Gromov-Witten invariant. The resulting discussion is fairly straightforward, but in it we will encounter some objects and formulae that will be key to the categorical arguments later on. 

\begin{assumption} \label{th:fundamental-assumption}
There are functions $\psi(q) \in 1 + q\bC[[q]]$ and $\eta(q) \in \bC[[q]]$, such that
\begin{equation} \label{eq:fundamental-equation}
-\partial_q [\omega_{\cornersubbar{E},q,\cornersubbar{b}}] = q^{-1}[\delta E|] + (\partial_q \cornerbar{b})/\cornerbar{b} = \psi(q) z^{(1)}_{q,\cornersubbar{b}} - \eta(q) [\bar{M}].
\end{equation}
\end{assumption}

One way to familiarize oneself with this condition is to analyze the formal structure of \eqref{eq:fundamental-equation}, thought of as an equation for the triple $(\cornerbar{b},\psi,\eta)$. It admits a large group of symmetries, generated by transformations
\begin{equation}
\label{eq:symmetry-1}
\big(\cornerbar{b}_\alpha,\psi_\alpha,\eta_\alpha\big) =
\big(\cornerbar{b} \cdot {\alpha(q)}^{[\bar{M}]}, \alpha(q)^{-1}\psi(q), \eta(q) - \alpha(q)^{-1} \alpha'(q)\big), 
\quad \alpha \in 1 + q\bC[[q]],
\end{equation}
where $\alpha'$ is the derivative in $q$-direction, and
\begin{equation}
\label{eq:symmetry-2}
\big(\cornerbar{b}_\beta,\psi_\beta,\eta_\beta\big) =
\big(\cornerbar{b}(\beta(q))\, (\beta(q)/q)^{[\delta E|]},
\psi(\beta(q)) \beta'(q),\, \eta(\beta(q)) \beta'(q) \big), \quad
\beta \in q + q^2\bC[[q]].
\end{equation}
Note that by using \eqref{eq:symmetry-1}, one can always reduce to the case where $\psi = 1$. After that reduction, the remaining symmetry \eqref{eq:symmetry-2} has the form
\begin{equation} \label{eq:symmetry-3}
\big(\cornerbar{b}_\beta,\eta_\beta\big) = \big( \cornerbar{b}(\beta(q))  (\beta(q)/q)^{[\delta E|]}
\beta'(q)^{[\bar{M}]}, 
\, \eta(\beta(q))\beta'(q) - \beta'(q)^{-1} \beta''(q) \big).
\end{equation}
An order-by-order analysis of \eqref{eq:symmetry-3} shows that there is always a unique choice of such a transformation which reduces the situation to $\eta = 0$.

\begin{lemma} \label{th:solutions}
The equation \eqref{eq:fundamental-equation} always has a solution, which moreover is unique up to the symmetries \eqref{eq:symmetry-1} and \eqref{eq:symmetry-2}.
\end{lemma}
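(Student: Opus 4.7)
The plan is to use the two symmetry families \eqref{eq:symmetry-1}, \eqref{eq:symmetry-2} to bring any solution to the canonical normalization $\psi = 1$, $\eta = 0$, and then to show that in this normalization the equation has a unique formal power series solution $\cornerbar{b}$.

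For the reduction to canonical form, the author already indicates that taking $\alpha = \psi$ in \eqref{eq:symmetry-1} uniquely achieves $\psi_\alpha = 1$. Within the residual subgroup \eqref{eq:symmetry-3}, setting $\eta_\beta = 0$ amounts to solving $\beta''(q)/\beta'(q) = \eta(\beta(q))\beta'(q)$ for $\beta \in q + q^2\bC[[q]]$; integrating once yields $\log \beta'(q) = \int_0^{\beta(q)} \eta(s)\,ds$, i.e.\ an autonomous (separable) ODE for $\beta$ with $\beta(0) = 0$, $\beta'(0) = 1$, which has a unique formal power series solution by Picard iteration. A direct order-by-order check shows that the stabilizer of $(1,0)$ is trivial, so the canonical form is genuinely unique. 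Once established, this implies that any two solutions lie in the same symmetry orbit, so existence and uniqueness of a canonical-form solution will finish the lemma.

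In canonical form, \eqref{eq:fundamental-equation} becomes $\partial_q \log \cornerbar{b} = z^{(1)}_{q,\cornersubbar{b}} - q^{-1}[\delta E|]$, whose right-hand side lies in $H^2(\cornerbar{E})[[q]]$ by \eqref{eq:z1}. Writing $\cornerbar{b} = \exp\bigl(\sum_{k \geq 1} c_k q^k\bigr)$ with $c_k \in H^2(\cornerbar{E})$ and extracting the coefficient of $q^k$, I would obtain at each order $k \geq 0$ a relation of the form
\begin{equation*}
(k+1)\, c_{k+1} \;=\; F_k(c_1, \ldots, c_k) \,+\, (c_{k+1} \cdot A_0)\, [\delta E|],
\end{equation*}
where $A_0$ is the class of a trivial section \eqref{eq:trivial-section} and $F_k$ is a universal polynomial in the lower-order unknowns. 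The structural point is that the only section class contributing a negative power of $q$ to $z^{(1)}_{q,\cornersubbar{b}}$ is $A_0$ itself, with coefficient class exactly $[\delta E|]$ (any $d$-fold cover $dA_0$ has $\bar{M} \cdot (dA_0) = d$ and hence contributes to $z^{(d)}$, not $z^{(1)}$); so the dependence of the right-hand side on the top unknown $c_{k+1}$ comes only from expanding $(\cornerbar{b} \cdot A_0)\,q^{-1}$ and is linear through the scalar $c_{k+1} \cdot A_0$.

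It then remains to invert, at each order, the operator $L_k : c \mapsto (k+1)c - (c \cdot A_0)[\delta E|]$ on $H^2(\cornerbar{E})$. This is a rank-one perturbation of $(k+1)\mathrm{Id}$, and since $[\delta E|] \cdot A_0 = -1$ by \eqref{eq:fibrewise-minus-one}, its eigenvalues are $k+1$ on $\{c : c \cdot A_0 = 0\}$ and $k+2$ on $\bC \cdot [\delta E|]$, both nonzero for $k \geq 0$. Hence $c_{k+1}$ is uniquely determined by $F_k$, which completes existence and uniqueness in canonical form. The only genuinely substantive point in the whole argument is this invertibility, which rests cleanly on the geometric input \eqref{eq:fibrewise-minus-one} that a trivial section has normal degree $-1$ along $\delta E|$.
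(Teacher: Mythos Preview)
Your approach is essentially the same as the paper's: reduce via the symmetries to the canonical form $\psi=1$, $\eta=0$, pass to the logarithm $B=\log\cornerbar{b}$, and solve order by order, with the only nontrivial point being the invertibility of the top-order operator, which follows from the normal bundle relation \eqref{eq:fibrewise-minus-one}.

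There is, however, a gap in generality. You write the top-order dependence as $(c_{k+1}\cdot A_0)[\delta E|]$ for a single trivial section class $A_0$. This is correct only when $\delta M$ is connected. In general $\delta M$ may have several components $\delta M_1,\dots,\delta M_r$ (this actually occurs in the paper's examples, e.g.\ Section~\ref{subsec:f1}), giving $r$ distinct trivial-section classes $A_0^{(i)}$ with $z_{A_0^{(i)}}=[\delta E|_i]$; the top-order term is then $\sum_i (c_{k+1}\cdot A_0^{(i)})[\delta E|_i]$, which is \emph{not} of the form $(c_{k+1}\cdot A_0)[\delta E|]$. The paper handles this by packaging the sum as an endomorphism $C$ of $H^2(\cornerbar{E})$ (restrict to $\delta E|$, Kunneth-project to $H^{*-2}(\delta M)$, Kunneth-include, push forward), and observes from \eqref{eq:fibrewise-minus-one} that $C^2=-C$, so its eigenvalues are $0,-1$ and $j-C$ is invertible for $j\geq 1$. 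Your rank-one computation is the connected-$\delta M$ specialization of this; replacing your $c\mapsto (c\cdot A_0)[\delta E|]$ by $C$ (and noting $[\delta E|_i]\cdot A_0^{(j)}=-\delta_{ij}$, again from \eqref{eq:fibrewise-minus-one}) repairs the argument with no further change.
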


\begin{proof}
In view of the previous discussion, we consider only solutions with $\psi = 1$ and $\eta = 0$. We also find it easier to replace the multiplicative class $\cornerbar{b}$ with its additive counterpart $B = \log(\cornerbar{b}) \in H^2(\cornerbar{E};q\bC[[q]])$. This turns \eqref{eq:fundamental-equation} into
\begin{equation} \label{eq:b-equation}
\partial_q B = \sum_A q^{\delta E| \cdot A} e^{B \cdot A} z_A - q^{-1}[\delta E|].
\end{equation}
Here and below, the sum is only over classes $A$ with $\bar{M} \cdot A = 1$. Let's think of \eqref{eq:b-equation} as a sequence of equations for the coefficients $B = \sum_{j \geq 1} B_j q^j$. These equations have the form
\begin{equation} \label{eq:left-difference}
j B_j = \sum_A (B_j \cdot A) z_A + \text{\it (terms depending only on $B_1,\dots,B_{j-1}$)},
\end{equation}
where this time, the sum is only over classes $A$ with $\bar{M} \cdot A = 1$ and $\delta E| \cdot A = -1$. Those classes are precisely the ``trivial sections'' of \eqref{eq:trivial-section}, and hence
\begin{equation}
\sum_A (B_j \cdot A) \, z_A = C(B_j),
\end{equation}
where
\begin{equation} \label{eq:c-term}
\begin{aligned}
& C: H^*(\cornerbar{E}) \xrightarrow{\text{restriction}} H^*(\delta E|) = H^*(\bC P^1 \times \delta M) \\
& \qquad \xrightarrow{\text{Kunneth projection}} H^{*-2}(\delta M) \xrightarrow{\text{Kunneth inclusion}} H^{*-2}(\bC P^1 \times \delta M) \\ & \qquad = H^{*-2}(\delta E|) \xrightarrow{\text{pushforward}} H^*(\cornerbar{E}).
\end{aligned}
\end{equation}
Because of \eqref{eq:fibrewise-minus-one}, this endomorphism satisfies $C^2 = -C$, and therefore $j-C$ is invertible for any $j \geq 1$. After rewriting \eqref{eq:left-difference} as 
\begin{equation} \label{eq:ode3}
(j-C) B_j = \text{\it (terms depending only on $B_1,\dots,B_{j-1}$)},
\end{equation}
one can solve it recursively in $j$, and the solution is unique.
\end{proof}

We finish these preliminary considerations by looking at an important special case, namely \eqref{eq:fundamental-equation} with trivial bulk term.

\begin{lemma} \label{th:invariant-subspace}
Consider diffeomorphisms of $\cornerbar{E}$ which preserve the deformation class of the symplectic form, and which map $[\delta E|]$ to itself; they also automatically preserve $[\bar{M}] = c_1(\cornerbar{E})$. Let
\begin{equation} \label{eq:invariant-h}
H^*(\cornerbar{E})^{\mathit{inv}} \subset H^*(\cornerbar{E})
\end{equation}
be the subspace on which all such diffeomorphisms act trivially. If that subspace is spanned by $[\delta E|]$ and $[\bar{M}]$, there is a solution of \eqref{eq:fundamental-equation} with $\cornerbar{b} = 1$.
\end{lemma}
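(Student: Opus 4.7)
The plan is to substitute the ansatz $\cornerbar{b} = 1$ into \eqref{eq:fundamental-equation}, which collapses it to the linear equation
\[
q^{-1}[\delta E|] = \psi(q)\, z^{(1)}_{q,1} - \eta(q)\,[\bar{M}]
\]
in the unknowns $\psi \in 1 + q\bC[[q]]$ and $\eta \in \bC[[q]]$. It is then enough to show that $z^{(1)}_{q,1}$ lies in the $\bC((q))$-span of $[\delta E|]$ and $[\bar{M}]$, after which the problem becomes elementary linear algebra.

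The core step is to argue that $z^{(1)}_{q,1} \in H^2(\cornerbar{E})^{\mathit{inv}} \otimes q^{-1}\bC[[q]]$, from which the hypothesis immediately yields the desired decomposition. Unwinding \eqref{eq:z-multilinear} for $d = 1$, the class $z^{(1)}_{q,1}$ is the sum over $A$ with $\bar{M}\cdot A = 1$ of $q^{\delta E|\cdot A}$ times the cohomology class Poincar\'e dual to the one-point invariant $\langle\,\cdot\,\rangle_A$. For any diffeomorphism $\phi$ in the group under consideration, every ingredient is equivariant: the summation index set is preserved because $\phi^*[\bar{M}] = [\bar{M}]$, the exponents $\delta E|\cdot A$ because $\phi^*[\delta E|] = [\delta E|]$, and the Poincar\'e dual of $\langle\,\cdot\,\rangle_A$ transforms into that of $\langle\,\cdot\,\rangle_{\phi_*A}$ by symplectic-deformation invariance of Gromov--Witten theory (using also that $\phi$ is orientation-preserving, which follows from preservation of the symplectic deformation class). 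Reindexing under $A \mapsto \phi_*^{-1}A$ yields $\phi^* z^{(1)}_{q,1} = z^{(1)}_{q,1}$.

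Combined with the hypothesis $H^2(\cornerbar{E})^{\mathit{inv}} \subseteq \bC[\delta E|] + \bC[\bar{M}]$, this gives a decomposition
\[
z^{(1)}_{q,1} = f(q)\,[\delta E|] + g(q)\,[\bar{M}],
\]
and \eqref{eq:z1} forces $f \in q^{-1} + \bC[[q]]$ and $g \in \bC[[q]]$. Setting $\psi(q) := (q f(q))^{-1}$, which lies in $1 + q\bC[[q]]$ since $qf(q) \in 1 + q\bC[[q]]$ is a unit, and $\eta(q) := \psi(q)\,g(q) \in \bC[[q]]$, produces a solution. If $[\delta E|]$ and $[\bar{M}]$ happen to be linearly dependent the decomposition is non-unique, but any choice still yields an admissible pair by the same formulas. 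The main obstacle is really just the invariance argument in the middle paragraph; that is the only place where actual symplectic content (as opposed to bookkeeping) enters.
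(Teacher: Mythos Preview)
Your proof is correct and follows essentially the same approach as the paper's: both reduce the question to showing $z^{(1)}_{q,1}$ lies in the span of $[\delta E|]$ and $[\bar{M}]$, and then invoke symplectic-deformation invariance of Gromov--Witten invariants to place $z^{(1)}_{q,1}$ in the invariant subspace. You simply spell out the invariance argument and the linear algebra for $(\psi,\eta)$ in more detail than the paper does.
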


\begin{proof}
The solvability of \eqref{eq:fundamental-equation} for $\cornerbar{b} = 1$ is equivalent to the statement that
\begin{equation} \label{eq:two-d-subspace}
z^{(1)}_q \in q^{-1}\bC[[q]] \cdot [\delta E|] \oplus \bC[[q]] \cdot [\bar{M}] \subset H^2(\cornerbar{E}) \otimes q^{-1}\bC[[q]]
\end{equation}
(since the bulk term is trivial, we've omitted it from the notation). Because Gromov-Witten invariants are symplectic deformation invariants, $z^{(1)}_q$ lies in \eqref{eq:invariant-h}.
\end{proof}

\begin{lemma} \label{th:4d}
Suppose that $2n = \mathrm{dim}(E) = 4$, and that there is a solution of \eqref{eq:fundamental-equation} with $\cornerbar{b} = 1$. Then the functions $\eta$, $\psi$ appearing in that solution satisfy
\begin{equation}
\eta = -(\partial_q \psi)/\psi.
\end{equation}
\end{lemma}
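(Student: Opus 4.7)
The strategy is to exploit the fact that in real dimension four, $\cornerbar{E}$ is a closed oriented 4-manifold, so Poincar{\'e} duality reduces every statement about classes in $H^2(\cornerbar{E})$ to its pairings with a chosen basis of $H^2$. The classes $[\bar{M}]$ and $[\delta E|]$ provide just such a basis for the span relevant to \eqref{eq:fundamental-equation}, so I would pair the equation with both and then use the divisor axiom to link the two scalar identities that result.

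\textbf{Step 1 (intersection numbers).} I would first record the three pairings on $\cornerbar{E}$. Because $\cornerbar{\pi}\colon \cornerbar{E}\to\bC P^1$ is a fibration, distinct fibres are disjoint and $[\bar{M}]\cdot[\bar{M}]=0$. Letting $m=|\delta M|\geq 1$ denote the degree of the pencil, transversal intersection along $\delta M$ gives $[\bar{M}]\cdot[\delta E|]=m$. Finally, the normal bundle formula \eqref{eq:fibrewise-minus-one} says that each component of $\delta E|$ is a $\bC P^1$ with normal bundle $\scrO(-1)$, so $[\delta E|]\cdot[\delta E|]=-m$.

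\textbf{Step 2 (pair the fundamental equation).} With $\cornerbar{b}=1$, \eqref{eq:fundamental-equation} reads $\psi z^{(1)} - \eta [\bar{M}] = q^{-1}[\delta E|]$. Cupping with $[\bar{M}]$ and integrating over $\cornerbar{E}$, the $\eta[\bar{M}]^2$ term vanishes, leaving
\[
\int_{\cornersubbar{E}} z^{(1)}\cdot[\bar{M}] = \frac{m}{q\psi}.
\]
Cupping with $[\delta E|]$ instead gives
\[
\int_{\cornersubbar{E}} z^{(1)}\cdot[\delta E|] = \frac{m(\eta - q^{-1})}{\psi}.
\]

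\textbf{Step 3 (divisor axiom).} The crucial input is that these two integrals are related by a first-order differential operator. For each class $A$ with $\bar{M}\cdot A=1$ (hence $A\neq 0$), the divisor axiom \eqref{eq:divisor-axiom} gives $\langle[\bar{M}]\rangle_A = \langle\rangle_A =: N_A$ and $\langle[\delta E|]\rangle_A = (\delta E|\cdot A)\,N_A$. Summing against $q^{\delta E|\cdot A}$ yields
\[
\int z^{(1)}\cdot[\bar{M}] = \sum_{\bar{M}\cdot A=1} q^{\delta E|\cdot A} N_A, \qquad
\int z^{(1)}\cdot[\delta E|] = \sum_{\bar{M}\cdot A=1} (\delta E|\cdot A)\, q^{\delta E|\cdot A} N_A,
\]
so the second is $q\partial_q$ of the first.

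\textbf{Step 4 (conclude).} Applying $q\partial_q$ to $m/(q\psi)$ gives $-m(q^{-1}+\psi'/\psi)/\psi$, which must agree with $m(\eta-q^{-1})/\psi$. Since $m\neq 0$, canceling $m/\psi$ and simplifying yields $\eta=-\psi'/\psi$.

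The only genuinely delicate point is Step 1, where the sign $[\delta E|]^2=-m$ is the essential geometric input: it is what makes the system of two pairings nondegenerate and lets us isolate $\eta$. Everything else is a direct consequence of the divisor axiom, which in the $\cornerbar{b}=1$ case is exactly the insertion formula \eqref{eq:insertion}.
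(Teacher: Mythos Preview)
Your proof is correct and follows essentially the same route as the paper: pair \eqref{eq:fundamental-equation} with $[\bar{M}]$ and with $[\delta E|]$, observe via the divisor axiom that the second pairing is $q\partial_q$ of the first, and solve for $\eta$. The only difference is cosmetic: you keep explicit track of the factor $m=|\delta M|$ and the intersection numbers $[\bar{M}]^2=0$, $[\bar{M}]\cdot[\delta E|]=m$, $[\delta E|]^2=-m$, whereas the paper silently cancels the common factor $m$ from both scalar equations and never records these numbers.
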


\begin{proof}
Start with \eqref{eq:fundamental-equation} and take the intersection product with $[\bar{M}]$. This yields
\begin{equation}
q^{-1}\psi(q)^{-1} = \sum_A q^{\delta E| \cdot A} \langle \rangle_A,
\end{equation}
where $\langle \rangle_A$ is the numerical Gromov-Witten invariant in class $A$ (as before, $\bar{M} \cdot A = 1$). Similarly, taking the intersection product with $[\delta E|]$ yields
\begin{equation}
(\eta(q) - q^{-1})\psi(q)^{-1} = \sum_A (\delta E| \cdot A) q^{\delta E| \cdot A} \langle \rangle_A.
\end{equation}
Comparing those two gives $(\eta(q) - q^{-1})\psi(q)^{-1} = q\partial_q (q^{-1} \psi(q)^{-1})$, and hence the desired relation.
\end{proof}

\subsection{Quantum eigenvalues\label{subsec:eigenvalues}}
Within the general formalism, the $q$-derivative of Gromov-Witten invariants is described by \eqref{eq:insertion}. When combined with Assumption \ref{th:fundamental-assumption}, this leads to a nonlinear ODE for certain Gromov-Witten invariants. Specifically, we want to consider the operator of quantum multiplication with the first Chern class:
\begin{equation} \label{eq:c-operator}
\begin{aligned}
& Q_{q,\cornersubbar{b}}: H^*(\cornerbar{E})((q)) \longrightarrow H^*(\cornerbar{E})((q)), \\
& Q_{q,\cornersubbar{b}}(x) = [\bar{M}] \ast x.
\end{aligned}
\end{equation}

\begin{lemma} \label{th:xy-relation}
Under Assumption \ref{th:fundamental-assumption}, \eqref{eq:c-operator} satisfies the equation, with $I$ as in \eqref{eq:grading-operator},
\begin{equation}
 \partial_q Q_{q,\cornersubbar{b}} = (\psi(q) Q_{q,\cornersubbar{b}}^2 - \eta(q) Q_{q,\cornersubbar{b}}) (I + 1) -
I ( \psi(q) Q_{q,\cornersubbar{b}}^2 - \eta(q) Q_{q,\cornersubbar{b}}) 
- \psi(q) 4z^{(2)}_{q,\cornersubbar{b}}.
\end{equation}
\end{lemma}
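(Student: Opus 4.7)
The strategy is to pair both sides of the claimed operator equation with an arbitrary test class $y$ via Poincar\'e duality, and verify the resulting scalar identity among Gromov--Witten invariants of $\cornerbar{E}$. By \eqref{eq:quantum-product}, for classes $x, y$ of pure degree we have $\int_{\cornersubbar{E}} Q_{q,\cornersubbar{b}}(x)\cdot y = \langle [\bar{M}], x, y\rangle_{q,\cornersubbar{b}}$. Differentiating in $q$ and invoking the insertion axiom \eqref{eq:insertion} (valid since $d = 3 \neq 2$) together with Assumption \ref{th:fundamental-assumption} gives
\begin{equation*}
\int_{\cornersubbar{E}} (\partial_q Q_{q,\cornersubbar{b}})(x) \cdot y
= \psi(q)\,\langle z^{(1)}_{q,\cornersubbar{b}}, [\bar{M}], x, y\rangle_{q,\cornersubbar{b}}
- \eta(q)\,\langle [\bar{M}], [\bar{M}], x, y\rangle_{q,\cornersubbar{b}}.
\end{equation*}

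The heart of the argument is to re-express each four-point invariant of the form $\langle [\bar{M}], c, x, y\rangle_{q,\cornersubbar{b}}$ with $|c| = 2$ as an operator in the quantum multiplication $\mu_c = c \ast (-)$ and the grading operator $I$. Applying the grading axiom \eqref{eq:scaling} with $d = 3$, and using $I[\bar{M}] = [\bar{M}]$, one obtains
\begin{equation*}
\langle [\bar{M}], c, x, y\rangle_{q,\cornersubbar{b}}
= (1-n)\langle c, x, y\rangle_{q,\cornersubbar{b}} + \langle c, Ix, y\rangle_{q,\cornersubbar{b}} + \langle c, x, Iy\rangle_{q,\cornersubbar{b}}.
\end{equation*}
(For $A = 0$, the left-hand side vanishes since a four-point invariant has no constant contribution, and the right-hand side vanishes independently by the degree constraint $|c|+|x|+|y|=2n$.) The Poincar\'e-duality identity $\int w \cdot (Iy) + \int (Iw) \cdot y = n \int w \cdot y$ (immediate from homogeneity of pairable classes) then shifts $I$ off the test class $y$ onto $\mu_c(x)$, and the three terms reassemble to
\begin{equation*}
\langle [\bar{M}], c, x, y\rangle_{q,\cornersubbar{b}}
= \int_{\cornersubbar{E}} \bigl(\mu_c (I+1) - I\mu_c\bigr)(x) \cdot y.
\end{equation*}

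Now substitute: for $c = [\bar{M}]$ one has $\mu_c = Q_{q,\cornersubbar{b}}$ by definition; for $c = z^{(1)}_{q,\cornersubbar{b}}$, combining \eqref{eq:m-squared} with associativity of $\ast$ yields $\mu_{z^{(1)}_{q,\cornersubbar{b}}} = Q_{q,\cornersubbar{b}}^2 - 4 z^{(2)}_{q,\cornersubbar{b}}$, where the second summand acts as the scalar $4 z^{(2)}_{q,\cornersubbar{b}} \in \bC[[q]]$ (since $z^{(2)}_{q,\cornersubbar{b}} \in H^0$). Because scalars commute with $I$, a short computation gives $[4 z^{(2)}](I+1) - I[4 z^{(2)}] = 4 z^{(2)}$, so
\begin{equation*}
\mu_{z^{(1)}_{q,\cornersubbar{b}}}(I+1) - I \mu_{z^{(1)}_{q,\cornersubbar{b}}}
= \bigl(Q_{q,\cornersubbar{b}}^2 (I+1) - I Q_{q,\cornersubbar{b}}^2\bigr) - 4 z^{(2)}_{q,\cornersubbar{b}}.
\end{equation*}
Combining this with the earlier expression for $\partial_q Q_{q,\cornersubbar{b}}$ and collecting the terms involving $(I+1)$ on the right versus $I$ on the left produces exactly the formula in the lemma.

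The one point requiring careful bookkeeping is the scalar correction $-4\psi(q) z^{(2)}_{q,\cornersubbar{b}}$: it is the unique term that does not fit into the pattern $(\cdot)(I+1) - I(\cdot)$, and it survives precisely because $I$ acts as zero on $H^0(\cornerbar{E})$. A naive identification $\mu_{z^{(1)}} \sim Q^2$ without remembering the scalar piece from \eqref{eq:m-squared} would make this inhomogeneous term drop out, so the main potential pitfall is keeping this contribution throughout. Everything else is routine manipulation of three- and four-point invariants using the divisor and grading axioms.
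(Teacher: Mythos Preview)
Your proof is correct and follows essentially the same route as the paper's. Both arguments differentiate $Q$ via the insertion formula \eqref{eq:insertion}, reduce the resulting four-point invariant with an $[\bar{M}]$ insertion back to three-point data using the grading/divisor relation \eqref{eq:scaling}, and then substitute $\mu_{z^{(1)}} = Q^2 - 4z^{(2)}$ from \eqref{eq:m-squared}; the only cosmetic difference is that the paper works directly with the operator-valued maps $z_{q,\cornersubbar{b}}$ applied to $x$, whereas you pair against a test class $y$ and invoke nondegeneracy of Poincar\'e duality.
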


\begin{proof}
To simplify the notation, all subscripts will be omitted. Using \eqref{eq:insertion} and \eqref{eq:scaling}, write
\begin{equation} \label{eq:diff-x}
\begin{aligned}
& (\partial_q Q) (x)
 = z([\bar{M}], -\partial_q [\omega],x) 
= \textstyle \sum_k k z^{(k)}(-\partial_q[\omega],x) \\
& \quad = (-\partial_q [\omega]) \ast (I + 1)x - I (-\partial_q[\omega] \ast x) \\
& \quad = (\psi z^{(1)} - \eta [\bar{M}]) \ast (I + 1)x - I( (\psi z^{(1)} - \eta [\bar{M}])\ast x) \\
& \quad = \psi z^{(1)} \ast (I + 1)x  - \eta Q(I+1)x - \psi I (z^{(1)} \ast x) +
\eta I Q x.
\end{aligned}
\end{equation}
Because of the associativity of the quantum product and \eqref{eq:m-squared}, we have
\begin{equation}
z^{(1)} \ast x = Q^2(x) - 4z^{(2)} x.
\end{equation}
Plugging that into \eqref{eq:diff-x} yields the desired result.
\end{proof}

Let $\Theta$ be the fundamental solution of the linear differential equation
\begin{equation} \label{eq:fundamental-solution}
(\partial_q + \Gamma)\,\Theta = 0, \quad
\Gamma =
\begin{pmatrix} 0 & \psi(q) \\ \psi(q) 4z^{(2)}_{q,\cornersubbar{b}} & \eta(q) \end{pmatrix}.
\end{equation}
By definition, $\Theta \in \mathit{GL}_2(\bC[[q]])$ is a matrix with constant term equal to the identity. At next order, 
\begin{equation} \label{th:first-order-theta}
\Theta = \mathit{Id} - q\Gamma(0) + O(q^2) = \begin{pmatrix} 1 & -q \\ 0 & 1-q\eta(0) \end{pmatrix} + O(q^2).
\end{equation}
We want to think of $\Theta$ (or rather its image in $\mathit{PGL}_2$) as a formal family of conformal transformations of the projective line. The next statement shows that these transformations track the $q$-dependence of the eigenvalues of \eqref{eq:c-operator}.
%
%

\begin{lemma} \label{th:eigenvalues}
Suppose that Assumption \ref{th:fundamental-assumption} holds. Then the eigenvalues of \eqref{eq:c-operator} are of the form, for $[r:s] \in \bC P^1$,
\begin{equation} \label{eq:lambda-transformation}
\lambda = \frac{\Theta_{21}r + \Theta_{22}s}{\Theta_{11}r + \Theta_{12}s} =
\begin{cases} 
\frac{s}{r} + O(q) & r \neq 0, \\
-\frac{1}{q} + \eta(0) + O(q) & r = 0.
\end{cases}
\end{equation}
\end{lemma}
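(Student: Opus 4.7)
The plan is to reduce the statement to the claim that every eigenvalue $\lambda = \lambda(q)$ of $Q_{q,\cornersubbar{b}}$ satisfies the scalar Riccati equation
\[
\partial_q \lambda \;=\; \psi(q)\, \lambda^2 \;-\; \eta(q)\, \lambda \;-\; 4\psi(q)\, z^{(2)}_{q,\cornersubbar{b}}.
\]
Once this is in hand, the explicit formula \eqref{eq:lambda-transformation} falls out from the classical linearisation of Riccati: substituting $\lambda = u_2/u_1$ and differentiating, a direct computation shows that the Riccati equation is equivalent to the vector ODE $(\partial_q + \Gamma)(u_1,u_2)^T = 0$. Since $\Theta$ was chosen as the fundamental solution normalised by $\Theta(0) = \mathrm{Id}$, every such $(u_1,u_2)^T$ equals $\Theta \cdot (r,s)^T$ for some $[r:s] \in \bC P^1$, producing the M\"obius expression in the statement; the asymptotics near $q = 0$ are then read off from \eqref{th:first-order-theta}.

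The substantive step is therefore the Riccati equation itself. I would fix a (formally $q$-dependent) right eigenvector $v$ with $Q_{q,\cornersubbar{b}} v = \lambda v$ together with a left eigenvector $w$ satisfying $w^T Q_{q,\cornersubbar{b}} = \lambda w^T$, normalised so that $w^T v$ is a unit, and then compute
\[
\partial_q \lambda \;=\; \frac{w^T\, (\partial_q Q_{q,\cornersubbar{b}})\, v}{w^T v}.
\]
The key is that the output of Lemma \ref{th:xy-relation} rearranges as
\[
\partial_q Q_{q,\cornersubbar{b}} \;=\; \bigl( \psi Q_{q,\cornersubbar{b}}^2 - \eta Q_{q,\cornersubbar{b}} - 4\psi z^{(2)}_{q,\cornersubbar{b}} \bigr) \;+\; \bigl[ \psi Q_{q,\cornersubbar{b}}^2 - \eta Q_{q,\cornersubbar{b}},\, I \bigr].
\]
The first summand, sandwiched between $w^T$ and $v$, immediately contributes $(\psi\lambda^2 - \eta\lambda - 4\psi z^{(2)}_{q,\cornersubbar{b}})\, w^T v$. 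The crucial observation is that the commutator summand contributes nothing: setting $X = \psi Q_{q,\cornersubbar{b}}^2 - \eta Q_{q,\cornersubbar{b}}$, one has $w^T X = (\psi\lambda^2 - \eta\lambda)\, w^T$ and $X v = (\psi\lambda^2 - \eta\lambda)\, v$, so both $w^T X\, I v$ and $w^T I\, X v$ collapse to the same scalar $(\psi\lambda^2 - \eta\lambda)\, w^T I v$. Their difference vanishes, and dividing through by $w^T v$ yields the Riccati equation.

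The main obstacle I anticipate is the formal-algebraic framework for eigenvalues. As the $r = 0$ case ($\lambda = -q^{-1} + \eta(0) + O(q)$) already indicates, eigenvalues of $Q_{q,\cornersubbar{b}}$ generally live in $\overline{\bC((q))}$ rather than in $\bC[[q]]$, so the eigenvector/left-eigenvector pair must be sought over an appropriate extension, and one must check that $w^T v$ can be arranged to be a unit. For simple eigenvalues over the algebraic closure this is routine. In degenerate or confluent cases I would either deform $\cornerbar{b}$ slightly to split the spectrum and pass to the limit using $q$-adic continuity, or instead work at the level of the characteristic polynomial and exploit cyclicity of the trace to confirm that $\mathrm{ad}_I$-commutator terms drop out of $\mathrm{tr}(Q^k_{q,\cornersubbar{b}})$ for every $k$, which suffices to determine the evolution of each symmetric function of the eigenvalues.
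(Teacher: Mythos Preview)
Your argument is correct for simple eigenvalues and takes a genuinely different route from the paper. The paper never singles out an eigenvector; instead it forms the matrix $R = (\Theta_{11}r+\Theta_{12}s)Q - (\Theta_{21}r+\Theta_{22}s)$ and, using Lemma~\ref{th:xy-relation} together with $\partial_q\Theta = -\Gamma\Theta$, computes $\mathrm{tr}(R^{-1}\partial_q R)$. The commutator-with-$I$ terms drop out by cyclicity of the trace (exactly the mechanism you exploit), leaving $\partial_q\log\det(R) = \psi\,\mathrm{tr}(Q) - \eta\,\mathrm{rank}\,H^*(\cornerbar{E})$, an identity of polynomials in $(r,s)$. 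Thus the zero scheme $\{\det(R)=0\}\subset\bC P^1_{\overline{\bC((q))}}$ is $\partial_q$-invariant and therefore descends to $\bC P^1_{\bC}$, which is precisely the statement. Your approach buys the scalar Riccati equation \eqref{eq:scalar-quadratic} as an immediate byproduct (the paper derives it separately), and makes the link to the linearisation $(\partial_q+\Gamma)$ transparent; the paper's approach buys a uniform treatment of all eigenvalues, simple or not, without ever leaving $\bC((q))[r,s]$.

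The one soft spot is your handling of the non-simple case. Your first proposed fix, deforming $\cornerbar{b}$, is not obviously available: Assumption~\ref{th:fundamental-assumption} pins down $\cornerbar{b}$ up to the symmetries \eqref{eq:symmetry-1}--\eqref{eq:symmetry-2}, and those symmetries act on the spectrum of $Q_{q,\cornersubbar{b}}$ by a global rescaling or a change of variable in $q$, neither of which splits a repeated eigenvalue. Your second fix, passing to $\mathrm{tr}(Q^k)$, does work (the commutator indeed vanishes under the trace for every $k$, since $X$ commutes with $Q$), but what you obtain is the hierarchy $\partial_q p_k = k(\psi p_{k+1} - \eta p_k - 4\psi z^{(2)}p_{k-1})$; turning that into the M\"obius description still requires an organising device, and the cleanest such device is exactly the paper's $\det(R)$.
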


\begin{proof}
As before, we work in abbreviated notation. For any $(r,s) \in \bC^2 \setminus \{(0,0)\}$, consider the matrix
\begin{equation} \label{eq:r-matrix}
R = (\Theta_{11}r + \Theta_{12}s)Q - (\Theta_{21}r + \Theta_{22}s).
\end{equation}
By definition, 
\begin{equation}
\det(R) = (\Theta_{11}r+\Theta_{12}s)^{\mathrm{rank} H^*(\cornersubbar{E})} \det(Q - \lambda),
\end{equation}
with $\lambda$ as in \eqref{eq:lambda-transformation}. We know that $\Theta_{11}r + \Theta_{12}s = r - sq + O(q^2)$ is always nonzero. For generic choice of $(r,s)$, $\lambda$ will not be one of the finitely many eigenvalues of $Q$. Let's temporarily assume that $(r,s)$ has been chosen in this way, so that $\det(R) \neq 0$.

From Lemma \ref{th:xy-relation}, it follows that
\begin{equation}
\partial_q R = R (\psi Q - \eta) (I + 1) - I (\psi Q - \eta) R + \lambda\psi(I\, R - R \, I) 
\end{equation}
Therefore,
\begin{equation} \label{eq:diff-det}
\begin{aligned}
& \det(R)^{-1} \, \partial_q \det(R) = \mathrm{tr}(R^{-1} \partial_q R) = 
\mathrm{tr}((\psi Q - \eta)(I+1) - I(\psi Q - \eta)) \\ & \quad
 = \mathrm{tr}(\psi Q - \eta) = \psi\, \mathrm{tr}(Q) - \eta\, \mathrm{rank} H^*(\cornerbar{E}).
\end{aligned}
\end{equation}
Let's rewrite this as an equality of homogeneous polynomials in the variables $r,s$:
\begin{equation} \label{eq:diff-det-2}
\partial_q \det(R) = \det(R) (\psi\, \mathrm{tr}(Q) - \eta\, \mathrm{rank} H^*(\cornerbar{E})) \in \bC((q))[r,s].
\end{equation}
The equation $\det(R) = 0$ defines a $0$-dimensional subscheme of the projective line over the algebraically closed field $\Lambda$ from \eqref{eq:algebraic-closure}. Property \eqref{eq:diff-det-2} shows that this subscheme is invariant under differentiation in $q$-direction, hence is necessarily obtained from a subscheme of $\bC P^1$ by extending constants to $\Lambda$. But by
definition, the subscheme is obtained by pulling back the eigenvalues of $Q$ by the projective transformation \eqref{eq:lambda-transformation}.
\end{proof}

Through \eqref{eq:eigenvalue-1}, Lemma \ref{th:eigenvalues} has implications for the disc-counting invariant \eqref{eq:disc-counting}, at least in the case where $[L] \in H^n(\cornerbar{E})$ is nontrivial. There is in fact a parallel argument which applies directly to \eqref{eq:disc-counting}, without any assumptions on the homology class of $L$. We will not attempt to fully work it out here, but we want to give an outline. The first step is an analogue of the divisor axiom, which says that $\partial_q W_{L,q,\cornersubbar{b}}$ counts holomorphic discs with one point going through a cycle representing $-\partial_q[\omega_{\cornersubbar{E},q,\cornersubbar{b}}]$. Using Assumption \ref{th:fundamental-assumption}, one turns this into the (nonlinear scalar first order) ODE
\begin{equation} \label{eq:scalar-quadratic}
\partial_ q W_{L,q,\cornersubbar{b}} = \psi(q) W_{L,q,\cornersubbar{b}}^2 - \eta(q) W_{L,q,\cornersubbar{b}} - \psi(q) 4 z^{(2)}_{q,\cornersubbar{b}}.
\end{equation}
By a computation similar to that in Lemma \ref{th:eigenvalues}, the solutions of this equation in $\bC[[q]]$ are
\begin{equation} \label{eq:w-values}
W_{L,q,\cornersubbar{b}} = \frac{\Theta_{21} + \Theta_{22}s}{\Theta_{11} + \Theta_{12}s} = s + O(q), \quad s \in \bC.
\end{equation}

\section{Deformations of the Fukaya category\label{sec:main}}

\subsection{The main result\label{subsec:trivial}}
Still in the same geometric situation, let's return to the Fukaya category $\scrA$ and its deformation $\scrA_{q,\bar{b}}$. The main theorem announced in this paper is:

\begin{theorem} \label{th:main}
Suppose that $\bar{b}$ is the restriction of some $\cornerbar{b}$ satisfying Assumption \ref{th:fundamental-assumption}. Then $\scrA_{q,\bar{b}}$ is a trivial deformation.
\end{theorem}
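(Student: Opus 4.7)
The plan is to reduce triviality of the deformation $\scrA_{q,\bar{b}}$ to a cohomological statement about the infinitesimal deformation class, and then to bootstrap from first order using the noncommutative divisor formalism from Section 2.e together with the fundamental equation \eqref{eq:fundamental-equation}.

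First I would set up the infinitesimal picture. The $q$-linear term of the $A_\infty$-structure on $\scrA_{q,\bar{b}}$ defines a Hochschild cocycle whose class $[\kappa_1] \in \mathit{HH}^2(\scrA,\scrA)$ encodes the first-order deformation. Following the standard relative Fukaya philosophy and its Lefschetz-fibration version, I would identify an open-closed map from $H^*(\cornerbar{E})$ (more precisely, from the ambient quantum cohomology together with its correction by $\cornerbar{b}$) into $\mathit{HH}^*(\scrA,\scrA)$, and show that it sends the class $-\partial_q[\omega_{\cornersubbar{E},q,\cornersubbar{b}}]$ to $[\kappa_1]$. Under Assumption \ref{th:fundamental-assumption}, this class is rewritten as $\psi(q) z^{(1)}_{q,\cornersubbar{b}} - \eta(q)[\bar{M}]$. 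I would then argue that each of these two summands maps to a Hochschild coboundary on $\scrA$: the term $[\bar{M}]$ acts via the grading operator \eqref{eq:scaling}, whose effect on $\scrA$ is killed by the natural grading-shift gauge; and the sections class $z^{(1)}_{q,\cornersubbar{b}}$, which counts curves of fibre degree one, maps to a coboundary essentially because, on the directed Lefschetz subcategory $\scrA \subset \scrB$, section contributions are realized by operations which strictly increase the thimble ordering and can be absorbed by an $A_\infty$-homotopy.

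Next, I would repackage the deformation through the noncommutative divisor $(\scrA,\scrP)$ associated to the canonical Serre-functor-to-identity natural transformation (Sections 2.e and 4 of the paper). The deformation $\scrA_{q,\bar{b}}$, together with its induced deformation of this transformation, is a deformation of noncommutative divisors with leading-order term $\theta_q \in H^0(\hom_{(\scrA,\scrA)}(\scrP,\scrA))[[q]]$. Given the vanishing of Hochschild obstructions in negative degrees expected from the anticanonical Lefschetz pencil hypothesis, which gives \eqref{eq:no-negative}, Lemma \ref{th:q-determines} reduces the isomorphism class of the deformation to $\theta_q$. I would then compute $\theta_q$ as a Gromov-Witten expression which, thanks to the fundamental equation, splits into precisely the two pieces analyzed above; this lifts the infinitesimal coboundary statement to triviality of $\scrA_{q,\bar{b}}$ to all orders (after matching the trivialization with the symmetries \eqref{eq:symmetry-1}, \eqref{eq:symmetry-2} of Assumption \ref{th:fundamental-assumption}, which correspond to reparametrization changes \eqref{eq:reparametrize-q}--\eqref{eq:change-of-bulk}).

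The main obstacle is the identification claim in the first step: showing chain-level that the open-closed map from $H^*(\cornerbar{E})$ really sends the bulk-twisted symplectic derivative to the first-order deformation class, and in particular that the sections-counting class $z^{(1)}_{q,\cornersubbar{b}}$ lands in the coboundaries. The conjectural picture from \cite{seidel00b}, only partially available in the literature, is that $\mathit{HH}^*(\scrF(\pi),\scrF(\pi))$ is governed by fixed-point Floer cohomology of the monodromy at infinity, and it is precisely in the anticanonical pencil case that the sections contribution becomes cohomologically trivial. Turning this geometric expectation into a precise identity, with the correct normalizing factors $\psi$ and $\eta$, and then running it inductively through every order of $q$ so that the infinitesimal trivialization assembles into a genuine gauge equivalence of formal deformations, is the technical heart of the argument. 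A secondary difficulty is checking \eqref{eq:no-negative} in the required form; this should follow because the bimodules $\scrP^{\otimes_{\scrA} i}$ admit a geometric description (tensor powers of the anticanonical bimodule for the pencil) whose degree grows with $i$, pushing cohomology into nonnegative degrees.
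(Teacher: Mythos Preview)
Your proposal has a structural gap. The noncommutative divisor formalism (Lemmas \ref{th:q-determines} and \ref{th:v-restriction}) classifies deformations of $\scrB = \scrA \oplus \scrP[1]$ in which $\scrA$ and $\scrP$ are \emph{held fixed}; it does not classify deformations of $\scrA$ itself. So you cannot use it to upgrade a first-order trivialization of $\scrA_{q,\bar{b}}$ to an all-orders one. In the paper that formalism appears only later, in Section \ref{subsec:fibre}, to control $\scrB_{q,\bar{b}|\bar{M}}$ once $\scrA_{q,\bar{b}}$ is already known to be trivial. Your ``bootstrap'' step is therefore not available, and without it you are left with only the first-order statement $[\kappa_1]=0$, which is strictly weaker than what is required.

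The paper instead works to all orders from the start. The Kaledin class \eqref{eq:dq-class} lives in $\mathit{HH}^2(\scrA_{q,\bar{b}},\scrA_{q,\bar{b}})$ over $\bC[[q]]$, and Lemma \ref{th:deformation-is-trivial} gives an if-and-only-if criterion for triviality of the entire deformation. Lemma \ref{th:symplectic-class} identifies this class with the image of $-\partial_q[\omega_{\bar{E},q,\bar{b}}]$ under a $q$-deformed closed-open map \eqref{eq:basic-co} into $\mathit{HH}^*(\scrA_{q,\bar{b}},\scrA_{q,\bar{b}})$, not into $\mathit{HH}^*(\scrA,\scrA)$. The key geometric step is the diagram \eqref{eq:master-diagram} and the long exact sequence \eqref{eq:key-les}: one shows that the restriction of $z^{(1)}_{q,\cornersubbar{b}}$ to $H^2(\bar{E})_q$ is the image of $1 \in H^0(\bar{M})[[q]]$ (Lemma \ref{th:s-class}), hence dies in $\mathit{HF}^*_{q,\bar{b}}(\bar\phi)$. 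Since $[\bar{M}]$ restricts to zero in $H^*(\bar{E})$ (the fibre over $\bC$ is null-homologous), Assumption \ref{th:fundamental-assumption} then forces $-\partial_q[\omega_{\bar{E},q,\bar{b}}]$ itself to map to zero, and the Kaledin class vanishes. Your mechanisms for $z^{(1)}$ (``operations which strictly increase the thimble ordering'') and for $[\bar{M}]$ (``grading-shift gauge'') are not what is used; the former is replaced by the Floer-theoretic exact sequence, and the latter is unnecessary because the class already restricts to zero.
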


\begin{corollary} \label{th:main-cor}
There is a distinguished class of bulk terms $\bar{b}$, all related to each other by reparametrizations \eqref{eq:change-of-bulk}, such that $\scrA_{q,\bar{b}}$ is a trivial deformation.
\end{corollary}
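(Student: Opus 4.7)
The plan is to assemble the corollary directly from Theorem \ref{th:main} together with the existence and uniqueness statement of Lemma \ref{th:solutions}. The only real work is bookkeeping: we must track what happens to the symmetry group of \eqref{eq:fundamental-equation} under restriction from the compactified total space $\cornerbar{E}$ to $\bar{E}$, and check that it reproduces exactly the reparametrization group \eqref{eq:change-of-bulk}.

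First I would invoke Lemma \ref{th:solutions} to produce a triple $(\cornerbar{b},\psi,\eta)$ solving \eqref{eq:fundamental-equation}. Restricting $\cornerbar{b} \in H^2(\cornerbar{E};1+q\bC[[q]])$ to $\bar E$ gives a class $\bar b$ of the type allowed by \eqref{eq:bulk}. By hypothesis this $\bar b$ is the restriction of a $\cornerbar{b}$ satisfying Assumption \ref{th:fundamental-assumption}, so Theorem \ref{th:main} immediately yields that $\scrA_{q,\bar b}$ is a trivial deformation. This takes care of existence and defines the candidate distinguished class: it consists of restrictions $\bar b$ of all solutions $(\cornerbar{b},\psi,\eta)$ of \eqref{eq:fundamental-equation}.

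Next I would verify the ``related by reparametrizations'' claim. By Lemma \ref{th:solutions}, any two solutions of \eqref{eq:fundamental-equation} on $\cornerbar{E}$ differ by a composition of symmetries \eqref{eq:symmetry-1} and \eqref{eq:symmetry-2}. Now I restrict each symmetry to $\bar E = \cornerbar E \setminus \bar M$. Because $\bar M$ is precisely the hypersurface removed, its Poincar\'e dual class $[\bar M]$ pulls back to zero in $H^2(\bar E)$; hence the action of \eqref{eq:symmetry-1} on the bulk term, which multiplies $\cornerbar{b}$ by $\alpha(q)^{[\bar M]}$, becomes trivial after restriction. Meanwhile $[\delta E|]$ pulls back to $[\delta E]$, so the bulk-term part of \eqref{eq:symmetry-2} restricts to
\begin{equation*}
\bar b \longmapsto \bar b(\beta(q)) \cdot (\beta(q)/q)^{[\delta E]},
\end{equation*}
which is exactly \eqref{eq:change-of-bulk}. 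So any two $\bar b$'s in our class are related by a reparametrization of the form \eqref{eq:reparametrize-q}--\eqref{eq:change-of-bulk}, as required.

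Conversely, one checks that the reparametrization \eqref{eq:change-of-bulk} applied to a $\bar b$ obtained this way is again a restriction of some solution $\cornerbar{b}'$ of \eqref{eq:fundamental-equation} (just apply the full symmetry \eqref{eq:symmetry-2} on $\cornerbar E$ before restricting), so the class is stable under reparametrization. The main potential obstacle, and the only thing I would want to double-check carefully, is the compatibility issue that different $\cornerbar{b}$ on $\cornerbar E$ might restrict to the same $\bar b$ on $\bar E$ (because the kernel of $H^2(\cornerbar E)\to H^2(\bar E)$ is spanned by $[\bar M]$); but this kernel is precisely the image of \eqref{eq:symmetry-1}, so the identification of symmetry orbits on $\cornerbar E$ with reparametrization orbits on $\bar E$ is clean. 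No further argument is needed.
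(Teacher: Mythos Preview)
Your proposal is correct and follows essentially the same approach as the paper's proof, which is a two-sentence sketch: invoke Theorem \ref{th:main} and Lemma \ref{th:solutions}, then observe that the symmetry \eqref{eq:symmetry-1} leaves $\bar b$ unchanged while \eqref{eq:symmetry-2} gives exactly \eqref{eq:change-of-bulk}. You have simply filled in the details (the vanishing of $[\bar M]$ under restriction, the identification of the kernel with the \eqref{eq:symmetry-1}-orbit) that the paper leaves implicit, and added the closure-under-reparametrization check, which the paper does not spell out.
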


The Corollary follows from Theorem \ref{th:main} and Lemma \ref{th:solutions}: changes of $\cornerbar{b}$ as in \eqref{eq:symmetry-1} does not affect $\bar{b}$, and those from \eqref{eq:symmetry-2} give precisely \eqref{eq:change-of-bulk}. Note that we are not claiming a converse (we are not claiming that this is the only choice of $\bar{b}$ for which $\scrA_{q,\bar{b}}$ is trivial).

Let's give an outline of the proof of Theorem \ref{th:main}. At first, we will work more generally with Lefschetz fibrations which have fibrewise compactifications \eqref{eq:fibrewise-compactification}. As a general feature of deformation theory, $\scrA_{q,\bar{b}}$ comes with a canonical cohomology class, the Kaledin class (see \cite{kaledin07,lunts10} or Appendix \ref{sec:algebra})
\begin{equation} \label{eq:dq-class}
[\partial_q \mu^*_{\scrA_{q,\bar{b}}}] \in \mathit{HH}^2(\scrA_{q,\bar{b}},\scrA_{q,\bar{b}}).
\end{equation}
What's relevant for us is the following statement, which gives a linear criterion for an a priori nonlinear problem (triviality of $A_\infty$-deformations):

\begin{lemma}[\protect{Kaledin, Lunts \cite[Proposition 4.5 and Proposition 7.3(b)]{lunts10}}] \label{th:deformation-is-trivial}
$\scrA_{q,\bar{b}}$ is a trivial $A_\infty$-deformation if and only if \eqref{eq:dq-class} vanishes. 
\end{lemma}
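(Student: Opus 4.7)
My plan is to prove both implications using the Gerstenhaber-bracket formalism on Hochschild cochains, relying on the fact that the Hochschild differential on $\mathit{CC}^*(\scrA_{q,\bar b},\scrA_{q,\bar b})$ is given by $[\mu^*_{\scrA_{q,\bar b}},-]$, and that the Lie algebra $\mathit{CC}^1$ of $A_\infty$-derivations integrates to the pro-unipotent group of formal $A_\infty$-automorphisms that are the identity modulo $q$.

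For the direction \emph{trivial implies vanishing Kaledin class}: if $\scrA_{q,\bar b}$ is gauge-trivial, I would fix an $A_\infty$-isomorphism $\Phi_q:(\scrA[[q]],\mu^*_\scrA)\to \scrA_{q,\bar b}$ with $\Phi_0=\mathrm{id}$ and $\Phi_q^*\mu^*_{\scrA_{q,\bar b}}=\mu^*_\scrA$. Differentiating this identity in $q$ and using the standard formula for the variation of the pullback of an $A_\infty$-structure under an infinitesimal automorphism yields
\begin{equation*}
\partial_q \mu^*_{\scrA_{q,\bar b}}=[\mu^*_{\scrA_{q,\bar b}},\phi_q], \qquad \phi_q:=(\partial_q\Phi_q)\circ\Phi_q^{-1}\in \mathit{CC}^1(\scrA_{q,\bar b},\scrA_{q,\bar b}),
\end{equation*}
so \eqref{eq:dq-class} is a Hochschild coboundary.

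For the converse, vanishing of \eqref{eq:dq-class} supplies a single cochain $\phi_q\in\mathit{CC}^1(\scrA_{q,\bar b},\scrA_{q,\bar b})$ with $\partial_q\mu^*_{\scrA_{q,\bar b}}=[\mu^*_{\scrA_{q,\bar b}},\phi_q]$. I would then integrate $\phi_q$: let $\Phi_q$ be the solution of the formal ODE $\partial_q\Phi_q=\phi_q\circ\Phi_q$ in the pro-unipotent group of formal $A_\infty$-automorphisms, with $\Phi_0=\mathrm{id}$. This equation is solved uniquely order-by-order in $q$, since fixing $\Phi_q$ modulo $q^j$ makes the $q^j$-coefficient of $\Phi_q$ a linear expression in the lower-order data. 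The same pullback-derivative formula then gives $\partial_q(\Phi_q^*\mu^*_{\scrA_{q,\bar b}})=0$, so $\Phi_q^*\mu^*_{\scrA_{q,\bar b}}=\mu^*_\scrA$, which is precisely the statement of triviality.

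The main obstacle, used in both directions, is proving the pullback-derivative identity
\begin{equation*}
\partial_q(\Phi_q^*\mu^*_{\scrA_{q,\bar b}}) = \Phi_q^*\bigl(\partial_q\mu^*_{\scrA_{q,\bar b}} - [\mu^*_{\scrA_{q,\bar b}},(\partial_q\Phi_q)\circ\Phi_q^{-1}]\bigr)
\end{equation*}
rigorously for $A_\infty$-isomorphisms (as opposed to honest dg-algebra morphisms). This is a bar-construction identity whose verification amounts to a routine but delicate sign-and-combinatorics calculation. A secondary subtlety is that the result produces a gauge equivalence only as a formal power series in $q$; since our whole framework is $q$-adic, this suffices. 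Both points are addressed carefully in the work of Kaledin and Lunts cited in the statement.
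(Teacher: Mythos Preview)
Your argument is correct; it is the standard proof underlying the cited result of Kaledin and Lunts. The paper itself does not give a proof of this lemma in the main text: it cites \cite{lunts10} directly, and in Appendix~\ref{sec:algebra} it recasts the statement as an instance of the general Maurer--Cartan result (Lemma~\ref{th:general-kaledin}, again cited without proof), by taking $\frakg = \mathit{CC}^*(\scrA,\scrA)[1]$ with the Gerstenhaber bracket. So the paper's ``approach'' is simply to place the lemma in the abstract dg Lie algebra framework and invoke the reference, whereas you spell out the actual mechanism: differentiating a trivialization to exhibit the Kaledin class as a coboundary, and conversely integrating a bounding cochain via the formal ODE $\partial_q\Phi_q = \phi_q\circ\Phi_q$ to produce a trivialization. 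Your acknowledgement that the pullback--derivative identity for $A_\infty$-isomorphisms is the main technical point, and your deferral to \cite{lunts10} for its verification, are both appropriate.
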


The next step is to replace the algebraic class \eqref{eq:dq-class} with a geometric one. Write
\begin{equation} \label{eq:q-homology}
H^*(\bar{E})_q = q^{-1} H^*(\bar{E}, E) \oplus H^*(\bar{E})[[q]].
\end{equation}
This is a graded $\bC[[q]]$-module, where multiplication by $q$ maps the first factor to the second one (by the standard map from relative to absolute cohomology). In particular, we have a distinguished class
\begin{equation} \label{eq:q-inverse-class}
q^{-1}[\delta E] \in q^{-1} H^2(\bar{E},E) \subset H^2(\bar{E})_q.
\end{equation}
We need \eqref{eq:q-homology} for a suitable version of the closed-open string map, which has the form
\begin{equation} \label{eq:basic-co}
\mathit{CO}_{q,\bar{b}}: H^*(\bar{E})_q \longrightarrow \mathit{HH}^*(\scrA_{q,\bar{b}},\scrA_{q,\bar{b}}).
\end{equation}
In analogy with other kinds of relative Fukaya categories \cite{sheridan11b}, one has:

\begin{lemma} \label{th:symplectic-class}
The Kaledin class can be expressed as follows:
\begin{equation}
[\partial_q \mu^*_{\scrA_{q,\bar{b}}}] = \mathit{CO}_{q,\bar{b}}(-[\partial_q \omega_{\bar{E},q,\bar{b}}]).
\end{equation}
\end{lemma}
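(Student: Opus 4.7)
The plan is to produce compatible cochain-level representatives of the two sides and then identify them term-by-term, using the geometric meaning of $q$-differentiation as marking intersections with $\delta E$.

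At cochain level, the operations $\mu^d_{\scrA_{q,\bar{b}}}$ are defined by a weighted count of pseudo-holomorphic discs $u: (D,\partial D) \to (\bar E, L_{i_0}\cup\cdots\cup L_{i_d})$ in homology classes $A$, with weights
\begin{equation*}
q^{\delta E \cdot A}\,(\bar{b}\cdot A)
\;=\;\exp\bigl(-\textstyle\int_A \omega_{\bar{E},q,\bar{b}}\bigr),
\end{equation*}
as in \eqref{eq:q1}--\eqref{eq:q-area}. Differentiating the weights in $q$ and using \eqref{eq:derivative-area} gives
\begin{equation*}
\partial_q\bigl[q^{\delta E \cdot A}(\bar{b}\cdot A)\bigr]
\;=\;\bigl(q^{-1}(\delta E \cdot A)+((\partial_q\bar{b})/\bar{b})\cdot A\bigr)\,q^{\delta E \cdot A}(\bar{b}\cdot A).
\end{equation*}
Thus the Hochschild cocycle $\partial_q\mu^*_{\scrA_{q,\bar{b}}}$ is represented by the same family of moduli spaces as $\mu^*_{\scrA_{q,\bar{b}}}$, decorated by the scalar factor in parentheses.

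The first step is then to reinterpret that factor geometrically. The quantity $(\delta E\cdot A)$ is the (signed) count of intersections of a generic representative $u$ with $\delta E$; marking any one of them produces the moduli space of discs with one interior marked point constrained to $\delta E$. By the definition of the closed-open string map, this operation (weighted by $q^{-1}$) is exactly $\mathit{CO}_{q,\bar{b}}(q^{-1}[\delta E])$, where $q^{-1}[\delta E]\in q^{-1}H^2(\bar{E},E)$ is the distinguished element \eqref{eq:q-inverse-class}. The second summand $((\partial_q\bar{b})/\bar{b})\cdot A$ is just the pairing of $A$ with a class in $H^2(\bar{E})[[q]]$, and the standard divisor-type axiom in bulk-deformed Lagrangian Floer theory identifies this, under the same recipe (insert one interior marked point through a representing cycle), with $\mathit{CO}_{q,\bar{b}}\bigl((\partial_q\bar{b})/\bar{b}\bigr)$. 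Adding the two contributions and comparing with \eqref{eq:derivative-area} gives
\begin{equation*}
\partial_q\mu^*_{\scrA_{q,\bar{b}}}
\;=\;\mathit{CO}_{q,\bar{b}}\bigl(q^{-1}[\delta E]+(\partial_q\bar{b})/\bar{b}\bigr)
\;=\;\mathit{CO}_{q,\bar{b}}\bigl(-[\partial_q\omega_{\bar{E},q,\bar{b}}]\bigr)
\end{equation*}
on the nose, which certainly implies the claimed cohomological identity.

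The main obstacle is the first summand: making rigorous the slogan ``$\partial_q$ of a disc-count with $k$ tangencies to $\delta E$ equals a disc-count with $k-1$ tangencies plus one interior marked point on $\delta E$, weighted by $q^{-1}$.'' This requires choosing perturbation/transversality data so that (i) all contributing holomorphic discs meet $\delta E$ only at isolated interior points of the domain, (ii) the moduli spaces counting intersections with $\delta E$ assemble into a well-defined cocycle for $\mathit{CO}_{q,\bar{b}}$, and (iii) the Fukaya $A_\infty$-structure, the closed-open map, and the action of $q$-differentiation are compatible at the chain level. This is precisely the technical content of the analogous statement for ordinary relative Fukaya categories of closed symplectic manifolds, worked out in \cite{sheridan11b}; adapting that argument to Lefschetz fibrations requires no new idea, only the incorporation of the boundary/infinity behaviour of $\bar\pi$, which is already built into the definition of $\scrF_{q,\bar{b}}(\bar\pi)$. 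The module $H^*(\bar{E})_q$ in \eqref{eq:q-homology} is designed exactly so that $q^{-1}[\delta E]$ is a legitimate input for $\mathit{CO}_{q,\bar{b}}$, and so that the matching described above is manifest.
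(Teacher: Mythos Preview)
The paper does not actually prove this lemma: it is stated with the preamble ``In analogy with other kinds of relative Fukaya categories \cite{sheridan11b}, one has:'' and left at that, as part of the outline of the (announced but not proved) main theorem. Your proposal is precisely the standard argument behind that analogy---differentiate the weight $q^{\delta E\cdot A}(\bar b\cdot A)$, reinterpret the factor $\delta E\cdot A$ as the count of interior marked points on $\delta E$ (hence as the closed-open map on $q^{-1}[\delta E]$), and handle the $(\partial_q\bar b)/\bar b$ term by the divisor axiom---with the transversality and chain-level compatibility issues deferred to \cite{sheridan11b}, exactly as the paper does. So your approach and the paper's (implicit) one coincide.
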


Before continuing, it is useful to review a bit more background material. The map \eqref{eq:basic-co} is an analogue of the standard open-closed string map
\begin{equation} \label{eq:0-open-closed}
\mathit{CO}: H^*(E) \longrightarrow \mathit{HH}^*(\scrA,\scrA).
\end{equation}
As already pointed out in \cite[Section 6]{seidel00b}, one does not expect this to be an isomorphism. More recently, this issue has received considerable attention \cite{seidel14b,abouzaid-ganatra14}. For us, the crucial aspect is that there is a commutative diagram
\begin{equation} \label{eq:0-diagram}
\xymatrix{
H^*(E) \ar[rr] \ar[d]^-{\mathit{CO}} && \mathit{HF}^*(\phi) \ar[d]
\\
\mathit{HH}^*(\scrA,\scrA) \ar[rr]^-{\iso} && H^*(\hom_{(\scrA,\scrA)}(\scrA,\scrP)).
}
\end{equation}
Here, $\phi$ is a specific automorphism of $E$ (rotation at infinity), and $\mathit{HF}^*(\phi)$ its fixed point Floer cohomology. On the open string side, the automorphism induces an autoequivalence of $\scrF(\pi)$, and $\scrP$ is the $\scrA$-bimodule associated to that autoequivalence. The top horizontal map in \eqref{eq:0-diagram} is a kind of continuation map, and the same geometric mechanism gives rise to a bimodule homomorphism $\scrA \rightarrow \scrP$, which turns out to be a quasi-isomorphism. The bottom horizontal map in \eqref{eq:0-diagram} is induced by that homomorphism, hence is an isomorphism. As a consequence, any class in $H^*(E)$ which gets mapped to zero in $\mathit{HF}^*(\phi)$ must lie in the kernel of $\mathit{CO}$. Let's suppose that our Lefschetz fibration comes from anticanonical Lefschetz pencils. Then, the top horizontal map in \eqref{eq:0-diagram} fits into a long exact sequence
\begin{equation} \label{eq:monodromy-les}
\cdots \rightarrow H^{*-2}_{\mathit{cpt}}(M) \longrightarrow H^*(E) \longrightarrow \mathit{HF}^*(\phi) \rightarrow \cdots
\end{equation}
The conclusion is that any class in $H^*(E)$ which comes from $H^{*-2}_{\mathit{cpt}}(M)$ lies in the kernel of the open-closed string map. Of course, in degree $2$ this is a rather pointless observation, since $H^0_{\mathit{cpt}}(M) = 0$.

We now consider what happens under fibrewise compactification. There is an extension of $\phi$ to an automorphism $\bar\phi$ of $\bar{E}$. One can associate to it a relative version of fixed point Floer cohomology, denoted by $\mathit{HF}^*_{q,\bar{b}}(\bar\phi)$, as well as a bimodule $\scrP_{q,\bar{b}}$ over $\scrA_{q,\bar{b}}$. The appropriate version of \eqref{eq:0-diagram} looks as follows:
\begin{equation} \label{eq:master-diagram}
\xymatrix{
H^*(\bar{E})_q \ar[rr] \ar[d]^-{\mathit{CO}_{q,\bar{b}}} && \mathit{HF}^*_{q,\bar{b}}(\bar\phi) \ar[d]
\\
\mathit{HH}^*(\scrA_{q,\bar{b}},\scrA_{q,\bar{b}}) \ar[rr]^-{\iso} && H^*(\mathit{hom}_{(\scrA_{q,\bar{b}},\scrA_{q,\bar{b}})}(\scrA_{q,\bar{b}},\scrP_{q,\bar{b}})).
}
\end{equation}
If the image of $-[\partial_q \omega_{\bar{E},q,\bar{b}}]$ in $\mathit{HF}^*_{q,\bar{b}}(\bar\phi)$ is zero, then the deformation $\scrA_{q,\bar{b}}$ must be trivial, because of Lemmas \ref{th:deformation-is-trivial} and \ref{th:symplectic-class}. At this point, we re-introduce the assumption that the Lefschetz fibration comes from an anticanonical Lefschetz pencil, and also require that $\bar{b}$ is the restriction of some \eqref{eq:cornerbulk}. The counterpart of \eqref{eq:monodromy-les} says that the top horizontal arrow from \eqref{eq:master-diagram} fits into a long exact sequence of graded $\bC[[q]]$-modules
\begin{equation} \label{eq:key-les}
\cdots \rightarrow H^{*-2}(\bar{M})[[q]] \longrightarrow
H^*(\bar{E})_q \longrightarrow \mathit{HF}^*_{q,\bar{b}}(\bar\phi) \rightarrow \cdots
\end{equation}
Let's introduce an analogue of \eqref{eq:q-homology},
\begin{equation}
H^*(\cornerbar{E})_q = q^{-1}H^*(\cornerbar{E}, E|) \oplus H^*(\cornerbar{E})[[q]].
\end{equation}
By \eqref{eq:z1}, $z^{(1)}_{q,\cornersubbar{b}}$ admits a natural lift to $H^2(\cornerbar{E})_q$, for which we use the same notation.

\begin{lemma} \label{th:s-class}
The restriction of $z^{(1)}_{q,\cornersubbar{b}}$ to $H^2(\bar{E})_q$ equals the image of $1 \in H^0(\bar{M})[[q]]$ under the first map in \eqref{eq:key-les}.
\end{lemma}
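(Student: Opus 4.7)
The plan is to identify both sides with the same enumerative count of pseudo-holomorphic sections of $\cornerbar{\pi}$ in classes $A$ with $\bar M \cdot A = 1$. The long exact sequence \eqref{eq:key-les} is the relative (bulk-deformed) analog of \eqref{eq:monodromy-les}, and its construction is dictated by the same geometric mechanism. In the undeformed case, the first map $H^{*-2}_{\mathit{cpt}}(M) \to H^*(E)$ is the Gysin pushforward along the fibre inclusion, represented chain-level by a ``section-counting'' moduli space: pseudo-holomorphic sections of $\bar\pi$ with one marked point in $E$ and an asymptotic cycle in the fibre. After fibrewise compactification and turning on the bulk term $\bar b$, this map deforms by additionally counting sections of $\cornerbar\pi$ that intersect $\delta E|$, weighted by $q^{\delta E| \cdot A}(\cornerbar b \cdot A)$. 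That weighted count is precisely what $z^{(1)}_{q,\cornersubbar b}$ encodes by its definition in Section \ref{subsec:gromov-witten}.

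Next, I would specialize to $1 \in H^0(\bar M)[[q]]$. Because this input imposes no fibre constraint, the image collects the one-pointed Gromov-Witten contributions for all $A$ with $\bar M \cdot A = 1$, evaluated at a moving marked point in $\bar E$. Decomposing by $\delta E| \cdot A$: the $\delta E| \cdot A = -1$ piece comes from the trivial sections \eqref{eq:trivial-section} and contributes $q^{-1}[\delta E|]$, whose restriction to $\bar E$ is the canonical generator $q^{-1}[\delta E]$ of $q^{-1}H^2(\bar E, E)$ via the Thom isomorphism $H^*(\bar E, E) \cong H^{*-2}(\delta E)$; the terms with $\delta E| \cdot A \geq 0$ contribute to the $H^2(\bar E)[[q]]$ summand and restrict naturally from $\cornerbar E$ to $\bar E$. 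This reproduces the decomposition \eqref{eq:z1} of $z^{(1)}_{q,\cornersubbar b}$ under the restriction map $H^*(\cornerbar E)_q \to H^*(\bar E)_q$, which is what the lemma asserts.

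The real content, and the main obstacle, is establishing the chain-level identification asserted in the first paragraph: that the initial arrow of \eqref{eq:key-les} is genuinely given by this section-counting Gromov-Witten map with the prescribed marked points and bulk weights. This requires unpacking the construction of $\mathit{HF}^*_{q,\bar b}(\bar\phi)$ and of the LES itself. I would expect to proceed in parallel with the undeformed treatment sketched in \cite{seidel00b} (or its variants in \cite{abouzaid-ganatra14}), augmented by the relative Fukaya category's bookkeeping: an energy/intersection filtration by $\delta E|$-degree, together with the $\cornerbar b$-weights built into the Floer data. The step that requires care is identifying, on the level of parametrized moduli spaces, the connecting homomorphism of \eqref{eq:key-les} with the evaluation map at the $\bar E$-valued marked point on a moduli space of sections of $\cornerbar\pi$ having a cylindrical end modeled on the rotation $\phi$ at infinity. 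Once that TQFT-style identification is in place, the matching with $z^{(1)}_{q,\cornersubbar b}$ is just a matter of reading off coefficients.
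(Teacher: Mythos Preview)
The paper does not actually prove this lemma. Lemma \ref{th:s-class} appears in Section \ref{subsec:trivial} as one ingredient in the outline of the proof of Theorem \ref{th:main}, and the paper is explicit (in the Introduction) that ``the main result will only be stated without proof.'' There is therefore no paper proof to compare your proposal against; the lemma is announced, not established.

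That said, your strategy is the geometrically expected one and is consistent with the surrounding discussion in the paper: the first arrow of \eqref{eq:key-les} should be realized by a moduli problem counting pseudo-holomorphic sections of $\cornerbar{\pi}$ (classes $A$ with $\bar{M} \cdot A = 1$), with an interior marked point evaluating in $\bar{E}$ and a fibre-type input; applying this to $1 \in H^0(\bar{M})$ removes the fibre constraint and leaves exactly the one-pointed section count, whose Poincar\'e dual is $z^{(1)}_{q,\cornersubbar{b}}$. Your decomposition of the contributions by $\delta E| \cdot A$, with the $q^{-1}[\delta E]$ term coming from the trivial sections \eqref{eq:trivial-section}, matches the paper's analysis in \eqref{eq:z1}. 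You are also honest that the substantive work lies in constructing \eqref{eq:key-les} and identifying its first map with this section-counting operation at the chain level; that is indeed where the content is, and it is precisely what the paper defers.

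One minor point of phrasing: in the undeformed exact sequence \eqref{eq:monodromy-les}, the map $H^{*-2}_{\mathit{cpt}}(M) \to H^*(E)$ is not meaningfully a ``section-counting'' map, since there are no nonconstant holomorphic sections landing in $E$; it is purely topological (and vanishes in the relevant degree since $H^0_{\mathit{cpt}}(M) = 0$). The section-counting interpretation only becomes nontrivial after fibrewise compactification, when the moduli space acquires genuine sections of $\cornerbar{\pi}$. Your later paragraphs reflect this correctly, but the opening description blurs the distinction.
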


Hence, the image of $\psi(q) \in H^0(\bar{M})[[q]]$ is $\psi(q) z^{(1)}_{q,\cornersubbar{b}}$. If one can achieve that this equals $q^{-1}[\delta E] + \partial_q\bar{b}/\bar{b}$, then it follows that the latter class maps to zero in $\mathit{HF}^*_{q,\bar{b}}(\bar\phi)$. This explains Theorem \ref{th:main}.

\begin{remark}
Suppose that 
\begin{equation} \label{eq:120}
H^2(E) = 0, \quad H^1_{\mathit{cpt}}(M) = 0.
\end{equation}
A more concrete form of the conjecture from \cite{seidel00b} says that the map
\begin{equation} \label{eq:open-closed-iso-0}
\mathit{HF}^*(\phi) \longrightarrow \mathit{HH}^*(\scrA,\scrA)
\end{equation}
obtained from \eqref{eq:0-diagram} is an isomorphism. Suppose that this is true. Then \eqref{eq:120} would imply that $\mathit{HH}^2(\scrA,\scrA) = 0$, by \eqref{eq:monodromy-les}. In that case, $\scrA$ would not have any nontrivial deformations at all. This fits in with our considerations as follows. The topological conditions \eqref{eq:120} imply that $H^2(\cornerbar{E})$ is two-dimensional, and it is then necessarily spanned by $[\bar{M}]$ and $[\delta E|]$. Assumption \ref{th:fundamental-assumption} is then trivially satisfied, for any $\cornerbar{b}$.
\end{remark}

\subsection{The main conjecture\label{subsec:conjecture}}
Our next step will be to discuss (conjecturally) how additional structures present in Fukaya categories of Lefschetz fibrations behave under fibrewise compactification. In \cite[Section 6]{seidel12b}, it was shown how to construct a canonical bimodule map
\begin{equation} \label{eq:serre-1}
\rho \in H^0(\mathit{hom}_{(\scrA,\scrA)}(\scrA^\vee[-n],\scrA)).
\end{equation}
Recall that the dual diagonal bimodule $\scrA^\vee$ gives rise (via tensor product) to the Serre functor on the category of right (perfect) $\scrA$-modules. Hence, \eqref{eq:serre-1} induces a natural transformation from the Serre functor (shifted up by $n$) to the identity functor. Given a fibrewise compactification, together with \eqref{eq:bulk}, one gets a corresponding deformation of \eqref{eq:serre-1} to an element
\begin{equation} \label{eq:deform-rho}
\rho_{q,\bar{b}} \in H^0(\mathit{hom}_{(\scrA_{q,\bar{b}},\scrA_{q,\bar{b}})}(\scrA^\vee_{q,\bar{b}}[-n],\scrA_{q,\bar{b}})).
\end{equation}

The next ingredient is more delicate. For Lefschetz fibrations coming from anticanonical Lefschetz pencils, \cite{seidel14b} provides a different construction of a map of the same kind, which we will here denote by
\begin{equation} \label{eq:serre-2}
\sigma \in H^0(\mathit{hom}_{(\scrA,\scrA)}(\scrA^\vee[-n],\scrA)).
\end{equation}
The construction in \cite{seidel14b} is rather ad hoc, but since our use of \eqref{eq:serre-2} is in any case speculative, we will allow ourselves to overlook its shortcomings. We conjecture that there is a canonical deformation of \eqref{eq:serre-2} which is formally parallel to \eqref{eq:deform-rho},
\begin{equation} \label{eq:deform-sigma}
\sigma_{q,\cornersubbar{b}} \in H^0(\mathit{hom}_{(\scrA_{q,\bar{b}},\scrA_{q,\bar{b}})}(\scrA^\vee_{q,\bar{b}}[-n],\scrA_{q,\bar{b}})).
\end{equation}
Suppose now that the bulk term is chosen as in Theorem \ref{th:main}, and hence that $\scrA_{q,\bar{b}}$ is the trivial deformation. After fixing a trivialization of that deformation, we get an isomorphism
\begin{equation} \label{eq:q-hom-serre}
H^*(\mathit{hom}_{(\scrA_{q,\bar{b}},\scrA_{q,\bar{b}})}(\scrA^\vee_{q,\bar{b}},\scrA_{q,\bar{b}})) \iso H^*(\mathit{hom}_{(\scrA,\scrA)}(\scrA^\vee,\scrA))[[q]].
\end{equation}
Hence, both $\rho_{q,\bar{b}}$ and $\sigma_{q,\cornersubbar{b}}$ can be considered as (formal) functions of $q$, taking values in $H^0(\mathit{hom}_{(\scrA,\scrA)}(\scrA^\vee[-n],\scrA))$. In particular, it now makes sense to differentiate them in $q$-direction.

\begin{conjecture} \label{th:connection}
Suppose that $\bar{b}$ is the restriction of some $\cornerbar{b}$ satisfying Assumption \ref{th:main}. Then, there is a choice of trivialization of the deformation $\scrA_{q,\bar{b}}$, such that, with respect to the associated isomorphism \eqref{eq:q-hom-serre}, we have
\begin{equation} \label{eq:connection-equation}
(\partial_q + \Gamma) 
\left(\begin{matrix} 
\rho_{q,\bar{b}} \\ 
\sigma_{q,\cornersubbar{b}} 
\end{matrix}
\right) = 0, \quad \text{
for the same $\Gamma$ as in \eqref{eq:fundamental-solution}.}
\end{equation}
\end{conjecture}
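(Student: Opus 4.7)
\textbf{Proof sketch of Conjecture \ref{th:connection}.} The plan is to categorify the derivation of the linear ODE \eqref{eq:fundamental-solution} from the closed-string Gromov--Witten side by running an open-string analogue of the argument used for Theorem \ref{th:main}. There, triviality of the $A_\infty$-deformation was controlled by the Kaledin class, which was matched with the negative of $\partial_q [\omega]$ via a closed--open map $\mathit{CO}_{q,\bar{b}}$. Here, the objects to be controlled are the deformed natural transformations $\rho_{q,\bar{b}}$ and $\sigma_{q,\cornersubbar{b}}$, and their $q$-derivatives play the role of Kaledin classes. I would express each derivative via an analogue of $\mathit{CO}_{q,\bar{b}}$ landing in $H^0(\mathit{hom}_{(\scrA_{q,\bar{b}},\scrA_{q,\bar{b}})}(\scrA_{q,\bar{b}}^\vee[-n],\scrA_{q,\bar{b}}))$, then use Assumption \ref{th:fundamental-assumption} to convert this into terms in $z^{(1)}_{q,\cornersubbar{b}}$ and $[\bar{M}]$.

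First I would construct a closed--open-type map $\Psi_\rho$ for $\rho$, modelled on \eqref{eq:basic-co} but using the parametrised moduli that defines $\rho$ in \cite{seidel12b}, with one extra interior marked point constrained by an input cohomology class, and establish the identity
\begin{equation*}
\partial_q \rho_{q,\bar{b}} \;=\; \Psi_\rho\bigl(-\partial_q [\omega_{\cornersubbar{E}, q, \cornersubbar{b}}]\bigr).
\end{equation*}
Since the discs in this moduli live in $\bar{E}$, the map $\Psi_\rho$ factors through restriction to $\bar{E}$; in particular $\Psi_\rho([\bar{M}])=0$ because $[\bar{M}]|_{\bar{E}}=0$ (as $c_1(\bar{E})=0$), which is the same vanishing used near the end of the proof sketch of Theorem \ref{th:main}. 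The remaining identification is $\Psi_\rho(z^{(1)}_{q,\cornersubbar{b}}) = -\sigma_{q,\cornersubbar{b}}$. Geometrically this is a gluing statement: the configurations contributing to $\Psi_\rho(z^{(1)})$ are pairs consisting of a disc of $\rho$-type together with a sphere section of the pencil attached at the extra marked point, and such configurations are exactly what the ad hoc construction of $\sigma$ in \cite{seidel14b} (deformed as in \eqref{eq:deform-sigma}) counts. Combined with Assumption \ref{th:fundamental-assumption}, this produces the first row of \eqref{eq:connection-equation}.

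For the second row I would run the same procedure with $\sigma$ in place of $\rho$, producing $\Psi_\sigma$ and the identity $\partial_q \sigma_{q,\cornersubbar{b}} = \Psi_\sigma(-\partial_q[\omega_{\cornersubbar{E},q,\cornersubbar{b}}])$. The novelty now is that the moduli defining $\sigma$ already sees the fibre at infinity through its interaction with the pencil structure, so the pairing with $[\bar{M}]$ is nontrivial, and a direct moduli count is designed to give $\Psi_\sigma([\bar{M}]) = \sigma_{q,\cornersubbar{b}}$, contributing the $\eta\sigma$ term. For the $z^{(1)}$-contribution, I would categorify the associativity identity \eqref{eq:m-squared}, $[\bar{M}]\ast[\bar{M}]=z^{(1)}_{q,\cornersubbar{b}}+4z^{(2)}_{q,\cornersubbar{b}}$, as a moduli identity for the two-marked-point configurations entering $\Psi_\sigma(z^{(1)})$: the $[\bar{M}]\ast[\bar{M}]$-piece pairs with the $\sigma$-moduli to give a $\Psi_\sigma([\bar{M}])$-term that cancels part of the $\eta$-contribution, leaving $\Psi_\sigma(z^{(1)}_{q,\cornersubbar{b}}) = -4z^{(2)}_{q,\cornersubbar{b}}\,\rho_{q,\bar{b}}$. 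This yields the second row of \eqref{eq:connection-equation}.

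The main obstacle, and the reason for the conjectural status, is the construction of $\sigma_{q,\cornersubbar{b}}$ together with the chain-level Kaledin identity for it: \cite{seidel14b} produces $\sigma$ by a rather fragile ad hoc recipe, and deforming it in the fibrewise-compactified setting so that $\Psi_\sigma$ exists and behaves naturally under degeneration and gluing requires a substantial expansion of the open--closed TQFT package of \cite{fooo} in the Lefschetz-pencil setting. Secondary difficulties are (i) upgrading the purely cohomological identity \eqref{eq:m-squared} to the chain-level moduli identity needed to match $\Psi_\sigma([\bar{M}]) \circ \rho$ against $\Psi_\sigma(z^{(1)})$, and (ii) pinning down the trivialization of $\scrA_{q,\bar{b}}$: among the gauge class of trivializations supplied by Theorem \ref{th:main} (which has an ambiguity of order $\mathit{HH}^1(\scrA,\scrA)\otimes q\bC[[q]]$), one must select a representative for which $(\rho_{q,\bar{b}},\sigma_{q,\cornersubbar{b}})$ satisfies the ODE \eqref{eq:connection-equation} precisely, not merely up to a gauge transformation of the deformed structure.
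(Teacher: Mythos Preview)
The paper does not prove this statement: it is explicitly a Conjecture, and no proof is given. Instead, the paper offers (i) a formal consistency check showing that \eqref{eq:connection-equation} behaves correctly under the symmetries \eqref{eq:symmetry-1} and \eqref{eq:symmetry-2}; (ii) a reformulation as the second-order ODE \eqref{eq:second-order} for $\rho_{q,\bar{b}}$ alone, eliminating the conjectural $\sigma_{q,\cornersubbar{b}}$; and (iii) heuristic motivation via the expected relation \eqref{eq:wq} to the disc-counting invariant $W_{L,q,\cornersubbar{b}}$, linking \eqref{eq:connection-equation} to the scalar ODE \eqref{eq:scalar-quadratic}. There is thus no ``paper's own proof'' to compare against.

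Your sketch goes well beyond what the paper attempts, and as a strategy it is in the right spirit: it parallels the outline for Theorem \ref{th:main}, replacing the Kaledin class by $\partial_q$ of the natural transformations and matching these with closed-open type maps evaluated on $-\partial_q[\omega_{\cornersubbar{E},q,\cornersubbar{b}}]$. You have also correctly located the main obstruction, which the paper states openly: $\sigma_{q,\cornersubbar{b}}$ is itself only a conjectural object (see the sentence preceding \eqref{eq:deform-sigma}), and the existing construction of $\sigma$ in \cite{seidel14b} is ad hoc. One caution: since there is no independent definition of $\sigma_{q,\cornersubbar{b}}$, your gluing identification $\Psi_\rho(z^{(1)}_{q,\cornersubbar{b}}) = -\sigma_{q,\cornersubbar{b}}$ is closer to a proposed \emph{definition} than to a verifiable identity; with that reading the first row of \eqref{eq:connection-equation} becomes nearly tautological, and the substantive content of the conjecture is concentrated in the second row (equivalently, in \eqref{eq:second-order} for $\rho_{q,\bar{b}}$), which is precisely where your argument is most speculative.
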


A usesful formal consistency check of \eqref{eq:connection-equation} is that it is compatible with the symmetries of \eqref{eq:fundamental-equation}. A change of $\cornerbar{b}$ as in \eqref{eq:symmetry-1} multiplies $z^{(2)}_{q,\cornersubbar{b}}$ with $\alpha^2$, and therefore transforms $\Gamma$ to
\begin{equation}
\Gamma_\alpha = \begin{pmatrix} 
0 & \alpha(q)^{-1} \psi(q) \\
\alpha(q) \, 4\psi(q)\, z^{(2)}_{q,\cornersubbar{b}} & \eta(q) - \alpha(q)^{-1} \alpha'(q)
\end{pmatrix}.
\end{equation}
The solutions of the corresponding equation \eqref{eq:connection-equation} are the given $\rho_{q,\bar{b}}$ (which indeed should not change, since $\bar{b}$ remains the same) together with
\begin{equation}
\sigma_{q,\cornersubbar{b}_\alpha}(q) = \alpha(q) \sigma_{q,\cornersubbar{b}}.
\end{equation}
Similarly, if we apply \eqref{eq:symmetry-2}, the corresponding transformation of $\Gamma$ is
\begin{equation}
\Gamma_\beta =
\beta'(q)\, \Gamma_{q,\cornersubbar{b}}(\beta(q))).
\end{equation}
The solutions of \eqref{eq:connection-equation} would then be obtained from the original ones by the same change of variables $q \mapsto \beta(q)$.

\begin{remark}
One of the more speculative aspects of Conjecture \ref{th:connection} is that it involves an object whose existence is itself conjectural, namely $\sigma_{q,\cornersubbar{b}}$. However, note that one can remove $\sigma_{q,\cornersubbar{b}}$ from the formalism, by rewriting \eqref{eq:connection-equation} as a second order linear ODE for $\rho_{q,\bar{b}}$ alone:
\begin{equation} \label{eq:second-order}
\partial_q^2 \rho_{q,\bar{b}} + \Big(\eta - \frac{\partial_q \psi}{\psi}\Big)\, \partial_q \rho_{q,\bar{b}} - \psi^2 4 z^{(2)}_{q,\cornersubbar{b}} \rho_{q,\bar{b}} = 0.
\end{equation}
In particular, if one knows the value of $\rho_{q,\bar{b}}$ at $q = 0$ and its first derivative, the entire $\rho_{q,\bar{b}}$ can be reconstructed from that using Gromov-Witten invariants of $\cornerbar{E}$.
\end{remark}

Part of the motivation for Conjecture \ref{th:eigenvalues} comes from the connection with holomorphic disc counts. Take $L$ as in \eqref{eq:disc-counting}, considered as an object of $\scrF_{q,\bar{b}}(\bar\pi)$. Since the restriction map
\begin{equation}
H^*(\mathit{hom}_{(\scrF_{q,\bar{b}}(\bar\pi),\scrF_{q,\bar{b}}(\bar\pi))}(\scrF_{q,\bar{b}}(\bar\pi)^\vee,\scrF_{q,\bar{b}}(\bar\pi))) \longrightarrow
H^*(\mathit{hom}_{(\scrA_{q,\bar{b}},\scrA_{q,\bar{b}})}(\scrA^\vee_{q,\bar{b}},\scrA_{q,\bar{b}}))
\end{equation}
is an isomorphism, the natural transformations \eqref{eq:deform-rho} and \eqref{eq:deform-sigma} extend uniquely to $\scrF_{q,\bar{b}}(\bar\pi)$. Moreover, the tensor product with the bimodule $\scrF_{q,\bar{b}}(\bar\pi)^\vee[-n]$ is the Serre functor for our category, which preserves closed Lagrangian submanifolds. Hence, we get elements
\begin{equation}
\rho_{L, q, \bar{b}}, \;\sigma_{L,q,\cornersubbar{b}} \in H^0(\mathit{hom}_{\scrF_{q,\bar{b}}(\bar\pi)}(L,L)) \iso \bC[[q]].
\end{equation}
One expects that
\begin{equation} \label{eq:wq}
W_{L,q,\cornersubbar{b}}= \frac{\sigma_{L,q,\cornersubbar{b}}}{\rho_{L,q,\bar{b}}}.
\end{equation}
This provides a link between Conjecture \ref{th:connection} and the previously observed behaviour of \eqref{eq:disc-counting} (see Lemma \ref{th:eigenvalues} and \eqref{eq:w-values}). Concretely, if $\rho$ is a nonzero solution of \eqref{eq:second-order}, and $\sigma = -(\partial_q\rho)/\psi$, then $W = -\sigma/\rho$ satisfies \eqref{eq:scalar-quadratic}.

\subsection{The Fukaya category of the fibre\label{subsec:fibre}}
Consider the category $\scrB$ from \eqref{eq:full}. The main result from \cite{seidel12} says that this has the structure of a noncommutative divisor on $\scrA$, where the relevant bimodule is $\scrP = \scrA^\vee[-n]$, and more importantly, the leading order term \eqref{eq:theta-class} can be identified with \eqref{eq:serre-1}. We would like to extend this idea to the fibrewise compactification, as follows:

\begin{conjecture} \label{th:get-fibre}
Suppose that the deformation $\scrA_{q,\bar{b}}$ is trivial. Then the category $\scrB_{q,\bar{b}|M}$ from \eqref{eq:bq-subcategory} has the structure of a deformation of noncommutative divisors on $\scrA$, whose leading order term \eqref{eq:theta-q-class} is the image of \eqref{eq:deform-rho} under \eqref{eq:q-hom-serre}.
\end{conjecture}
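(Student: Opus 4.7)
The plan is to upgrade the main theorem of \cite{seidel12} to the $q$-parameterized setting, proceeding in three steps.

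First, I would realize the desired splitting $\scrB_{q,\bar{b}|\bar{M}} \iso \scrA[[q]] \oplus \scrP[[q]][1]$ as graded $\bC[[q]]$-modules. By Lemma \ref{th:directed}, one can fix a representative of $\scrB_{q,\bar{b}|\bar{M}}$ with vanishing $\mu^0$ whose directed $A_\infty$-subcategory is quasi-isomorphic to $\scrA_{q,\bar{b}}$. Using the hypothesis that $\scrA_{q,\bar{b}}$ is trivial (with a chosen trivialization), I can arrange that this directed subcategory is literally $\scrA[[q]]$. The complementary summand, built from the morphism spaces $\hom_{\scrB_{q,\bar{b}|\bar{M}}}(V_i,V_j)$ with $i > j$, deforms $\scrA^\vee[-n] = \scrP$; concretely, Poincar{\'e} duality on the closed fibre $\bar{M}$ provides an identification of this summand with $\scrP[[q]]$ as an $\scrA$-bimodule, and one needs to check (using that the bimodule deformation is controlled by Hochschild cohomology of $\scrA$ with coefficients in endomorphisms of $\scrP$) that the deformation of the bimodule structure is trivial, so that the identification can be chosen compatibly with the trivialization of $\scrA_{q,\bar{b}}$.

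Second, I would verify that the $A_\infty$-structure on $\scrB_{q,\bar{b}|\bar{M}}$ is weight non-decreasing with respect to the grading assigning weight $0$ to $\scrA$ and weight $-1$ to $\scrP[1]$, and that its weight $0$ part reproduces the $A_\infty$-structure on $\scrA[[q]]$ together with the bimodule structure on $\scrP[[q]]$. Weight non-decreasingness amounts to the assertion that a structure map with $i$ inputs in $\scrP[1]$ and $j$ outputs in $\scrP[1]$ lowers weight only if $i \geq j$; and this follows from the directedness of the vanishing cycles $\{V_1,\dots,V_m\}$, exactly as in the undeformed argument. The weight $0$ matching follows tautologically from the identifications made in the first step. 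This shows that $\scrB_{q,\bar{b}|\bar{M}}$ carries the structure of a deformation of noncommutative divisors on $\scrA$, proving the first assertion of the conjecture.

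Third, and most substantially, I would identify the leading order term $\theta_q$ with the image of $\rho_{q,\bar{b}}$ under \eqref{eq:q-hom-serre}. The weight $1$ part of $\mu^*_{\scrB_{q,\bar{b}|\bar{M}}}$ is a cochain-level bimodule map $\scrP \to \scrA$ given by counting holomorphic polygons in $\bar{M}$ with boundary on appropriate sequences of vanishing cycles, weighted by $q^{\delta M \cdot A}(\bar{b}|\bar{M} \cdot A)$. On the other hand, $\rho_{q,\bar{b}}$ is defined by counting holomorphic polygons in the total space $\bar{E}$ with boundary on Lefschetz thimbles, weighted by intersections with $\delta E$. The desired equality must come from a degeneration/gluing argument parameterized over $q$: a vanishing-cycle polygon in $\bar{M}$ can be recovered from a thimble polygon in $\bar{E}$ by letting the base projection collapse to a point, and one needs to show that bulk intersections on the two sides match (because $[\delta E]|_{\bar{M}} = [\delta M]$ and analogously for $\bar{b}$). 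This is the parameterized analogue of the core geometric argument of \cite{seidel12}.

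The main obstacle is the third step: setting up the parameterized moduli spaces (with bulk insertions, fibration-compatible almost complex structures, and the extra base-collapsing modulus) and proving the requisite transversality and gluing results rigorously, in a way that produces the cocycle-level equality rather than just a cohomology-level one. A secondary difficulty appears already in the first step, namely confirming that the relevant bimodule deformation of $\scrP$ is trivial; this likely needs either a direct vanishing argument for a piece of Hochschild cohomology, or an appeal to the fact that the trivialization of $\scrA_{q,\bar{b}}$ canonically extends to such an identification.
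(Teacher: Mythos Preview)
The paper does not prove this statement: it is explicitly stated as a \emph{conjecture} and left open. There is therefore no ``paper's own proof'' to compare your proposal against. What the paper does is motivate the conjecture as the natural $q$-deformed extension of the main result of \cite{seidel12}, and then \emph{assume} it (together with Conjecture~\ref{th:connection} and the vanishing hypothesis \eqref{eq:no-negative}) in order to derive the consequences discussed in Section~\ref{subsec:fibre}.

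Your three-step outline is a reasonable strategy for attacking the conjecture, and you correctly identify the serious obstacles. Two comments. First, in your step one, the claim that the complementary $\scrA$-bimodule deformation of $\scrP = \scrA^\vee[-n]$ is trivial does not follow formally from triviality of $\scrA_{q,\bar{b}}$: invertible bimodules can admit nontrivial formal deformations even over a fixed $A_\infty$-base, and you would need a separate argument (e.g.\ a vanishing of the relevant piece of $H^1(\hom_{(\scrA,\scrA)}(\scrP,\scrP))$, or a geometric input specific to the anticanonical setting) to pin this down. Second, your step three is indeed the heart of the matter, and it is precisely the kind of parameterized degeneration analysis that the paper flags as outstanding; note that \cite{seidel12} already required a delicate moduli-space setup in the undeformed case, and carrying bulk insertions and the relative count through that argument is not a routine extension. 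In short: your plan is in the spirit of what one would expect a proof to look like, but the gaps you name are genuine, and the paper makes no claim to have closed them.
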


In general, the leading order term doesn't determine a noncommutative divisor. However, for Lefschetz fibrations arising from anticanonical Lefschetz pencils, it is reasonable to assume that this will be always the case, as explained in \cite[Remark 7.23]{seidel14b}. Assuming that this is the case, and that Conjectures \ref{th:connection} and \ref{th:get-fibre} hold, it follows that the map $\rho_{q,\bar{b}}$ computed through \eqref{eq:second-order} completely determines $\scrB_{q,\bar{b}|\bar{M}}$. In particular, we can then apply Corollary \ref{th:divide-out} with $t_1 = \Theta_{11}$ and $t_2 = \Theta_{12}$. This explains Conjecture \ref{th:3}, where the generator is in fact $t = \Theta_{12}/\Theta_{11}$.
%

\begin{remark} \label{th:mirror-fibre}
It is helpful to interpret this situation in terms of mirror symmetry. Suppose that $\scrA$ is derived equivalent to the category of coherent sheaves on a smooth variety $X$. The mirrors of $(\rho,\sigma)$ are $(r,s)$ sections of $\scrK_X^{-1}$. Suppose that this pencil gives a well-defined map
\begin{equation}
X \longrightarrow \bC P^1.
\end{equation}
Then, the mirror of $\scrB_{q,\bar{b}}$ would be obtained by looking at the vanishing locus of $\Theta_{11}r + \Theta_{12}s$, which means that it is the fibre over the formal disc in $\bC P^1$ parametrized by $-\Theta_{11}/\Theta_{12}$.
\end{remark}

\section{Elementary examples\label{sec:elementary}}

\subsection{A non-example}
Let's begin with an extremely simple ``made up'' example. This does not come from a Lefschetz pencil, hence our main results don't apply to it, but it illustrates the general idea of fibrewise compactification, and the associated deformation of the Fukaya category.

Consider an exact Lefschetz fibration whose fibre $M$ is a two-punctured torus, and which has a basis of vanishing cycles $(V_1,V_2)$ drawn in Figure \ref{fig:2-cycles} (this describes the Lefschetz fibration uniquely, up to deformation). We use the symplectic Calabi-Yau structure given by the standard trivialization of the canonical bundle of the torus. Using \eqref{eq:directed}, one gets
\begin{equation}
\mathit{hom}_{\scrA}(L_i,L_j) = \begin{cases} \bC \cdot e_{L_i} & i = j, \\
\mathit{CF}^*(V_1,V_2) = \bC \cdot x \oplus \bC \cdot y & (i,j) = (1,2), \\
0 & (i,j) = (2,1). \end{cases}
\end{equation}
The degrees are $|x| = 0$, $|y| = 1$. All $A_\infty$-operations $\mu^d_{\scrA}$ vanish. Let's consider the obvious fibrewise compactification, obtained by filling in the punctures, and use Lemma \ref{th:directed} to determine $\scrA_q$ (the same approach will be used in all the computations in this section). In this case, $\scrA_q$ is the trivial deformation: we have two holomorphic bigons, whose contributions to the differential $\mu^1_{\scrA_q}(x)$ are $\pm q y$, hence cancel. 

\begin{remark} \label{th:matching}
The partial compactification $\bar{E}$ contains a Lagrangian sphere, which gives rise to a spherical object in $\scrF_q(\bar\pi)$. Algebraically, this can be written as the cone of a morphism $L_1 \rightarrow L_2$, hence lies in the derived category $D(\scrA_q)$. Because the deformation is trivial, the same object appears already in $D(\scrA) \iso D(\scrF(\pi))$, in spite of the fact that there are no Lagrangian spheres in $E$ (since $H_2(E) = 0$).
\end{remark}

Let's consider the effect of introducing a bulk term \eqref{eq:bulk}. We can write it as
\begin{equation}
\bar{b} = (\beta_0)^{A_0} (\beta_1)^{A_1},
\end{equation}
where $A_0,A_1$ are the (Poincar{\'e} duals of the) two sections of $\bar{\pi}$ corresponding to the punctures in the fibre $M$; and $\beta_0$, $\beta_1$ are elements of $1 + q\bC[[q]]$. The effect on our previous computation is that holomorphic polygons in $\bar{M}$ that pass over the punctures with multiplicities $(m_0,m_1)$ should be counted with an additional weight $\beta_0^{m_0} \beta_1^{m_1}$. This means that
\begin{equation} \label{eq:beta-factors}
\mu^1_{\scrA_{q,\bar{b}}}(x) = q(\beta_0 - \beta_1) y.
\end{equation}
For $\beta_0 = \beta_1$, the deformation is trivial (and indeed, the general property \eqref{eq:change-of-bulk} tells us that $\scrF_{q,\bar{b}}(\bar\pi)$ must be a reparametrization of $\scrF_q(\bar\pi)$). For $\beta_0 \neq \beta_1$, the deformation is nontrivial (at some order in $q$, which depends on $\beta_1/\beta_0$). 

\begin{remark}
If $\beta_0 \neq \beta_1$, the Lagrangian sphere from Remark \ref{th:matching} is no longer an object of $\scrF_{q,\bar{b}}(\bar{\pi})$, since the integral of $\bar{b}$ over it is nonzero.
\end{remark}

\begin{remark} \label{th:add-puncture}
Suppose that we modify the geometric setup, by putting another puncture close to one of the two existing ones. This affects the way in which holomorphic bigons are counted, resulting in
\begin{equation} \label{eq:qq2}
\mu^1_{\scrA_q}(x) = (q-q^2) y,
\end{equation}
which is clearly a nontrivial deformation (even at first order in $q$). In this situation, there is no longer a Lagrangian sphere in $\bar{E}$, since the symplectic form will have nonzero integral over the relevant homology class. Moreover, $\scrA_{q,\bar{b}}$ will remain a nontrivial deformation for any choice of $\bar{b}$, since that changes the weights with which holomorphic bigons are counted only by invertible elements of $\bC[[q]]$. Thus, Question \ref{qu:1} has a negative answer in this case.
\end{remark}

\begin{figure}
\begin{centering}
\begin{picture}(0,0)%
\includegraphics{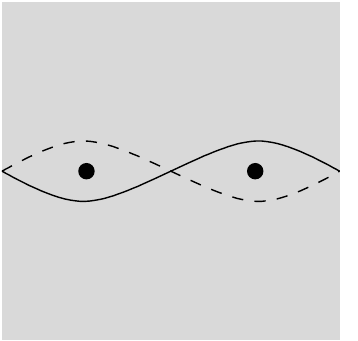}%
\end{picture}%
\setlength{\unitlength}{3552sp}%
\begingroup\makeatletter\ifx\SetFigFont\undefined%
\gdef\SetFigFont#1#2#3#4#5{%
  \reset@font\fontsize{#1}{#2pt}%
  \fontfamily{#3}\fontseries{#4}\fontshape{#5}%
  \selectfont}%
\fi\endgroup%
\begin{picture}(1902,1802)(589,-1412)
\put(2476,-561){\makebox(0,0)[lb]{\smash{{\SetFigFont{10}{12}{\rmdefault}{\mddefault}{\updefault}{\color[rgb]{0,0,0}$y$}%
}}}}
\put(1926,-886){\makebox(0,0)[lb]{\smash{{\SetFigFont{10}{12}{\rmdefault}{\mddefault}{\updefault}{\color[rgb]{0,0,0}$2$}%
}}}}
\put(1926,-286){\makebox(0,0)[lb]{\smash{{\SetFigFont{10}{12}{\rmdefault}{\mddefault}{\updefault}{\color[rgb]{0,0,0}$1$}%
}}}}
\put(1501,-411){\makebox(0,0)[lb]{\smash{{\SetFigFont{10}{12}{\rmdefault}{\mddefault}{\updefault}{\color[rgb]{0,0,0}$x$}%
}}}}
\end{picture}%
\caption{\label{fig:2-cycles}}
\end{centering}
\end{figure}%

\subsection{The mirror of $\bC P^2$} 
Our next example is the toric mirror of the projective plane: 
\begin{equation} \label{eq:toric-mirror-cp2}
\begin{aligned}
& E = \bC^* \times \bC^*, \\
& \pi(x_1,x_2) = x_1 + x_2 + \frac{1}{x_1x_2}.
\end{aligned}
\end{equation}
We'll use the basis of vanishing cycles from \cite[Section 3b]{seidel00b}, shown in Figure \ref{fig:cp2-mirror}. This is the example we had in mind in Section \ref{subsec:the-problem}, for which it was shown in \cite[Section 6.2]{auroux-katzarkov-orlov04} that the deformation $\scrA_q$ is trivial. Within mirror symmetry, this reflects the fact that the fibrewise compactification is again a mirror of $\bC P^2$, but with respect to a non-toric anticanonical divisor \cite{auroux07} (we will return to this viewpoint later).
\begin{figure}
\begin{centering}
\begin{picture}(0,0)%
\includegraphics{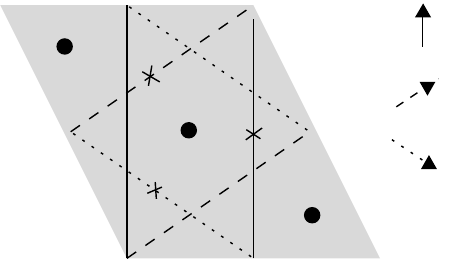}%
\end{picture}%
\setlength{\unitlength}{3552sp}%
\begingroup\makeatletter\ifx\SetFigFont\undefined%
\gdef\SetFigFont#1#2#3#4#5{%
  \reset@font\fontsize{#1}{#2pt}%
  \fontfamily{#3}\fontseries{#4}\fontshape{#5}%
  \selectfont}%
\fi\endgroup%
\begin{picture}(2416,1376)(1200,-1123)
\put(3601, 14){\makebox(0,0)[lb]{\smash{{\SetFigFont{10}{12}{\rmdefault}{\mddefault}{\updefault}{\color[rgb]{0,0,0}$1$}%
}}}}
\put(3601,-586){\makebox(0,0)[lb]{\smash{{\SetFigFont{10}{12}{\rmdefault}{\mddefault}{\updefault}{\color[rgb]{0,0,0}$3$}%
}}}}
\put(3601,-286){\makebox(0,0)[lb]{\smash{{\SetFigFont{10}{12}{\rmdefault}{\mddefault}{\updefault}{\color[rgb]{0,0,0}$2$}%
}}}}
\end{picture}%
\caption{\label{fig:cp2-mirror}}
\end{centering}
\end{figure}%

Let's start with the exact fibration (except for signs, this computation goes back to \cite{seidel00b}). Since there are three vanishing cycles, and all the morphisms between them are concentrated in degree $0$, the only nontrivial part of the $A_\infty$-structure is the product
\begin{equation} \label{eq:cp2-product}
\mu^2_{\scrA}: \mathit{CF}^*(V_2,V_3) \otimes \mathit{CF}^*(V_1,V_2) \longrightarrow \mathit{CF}^*(V_1,V_3).
\end{equation}
The intersections $V_i \cap V_j$ ($i \neq j$) each consist of three points, so \eqref{eq:cp2-product} has $27$ coefficients. All but $6$ of these coefficients vanish, and the remaining ones are $\pm 1$ (corresponding to the six ``small'' triangles in Figure \ref{fig:cp2-mirror}). To get the signs right, it is important to remember that all vanishing cycles must be equipped with local systems having holonomy $-1$, or equivalently, with nontrivial {\em Spin} structures (the reasons are explained in \cite{seidel04}). Let's think of these local systems (or {\em Spin} structures) as being trivial away from one marked point on each $V_k$; the specific choice of points we use is shown in Figure \ref{fig:cp2-mirror}. The effect is that a triangle is counted with $(-1)^s$ if its boundary crosses the marked points $s$ times. With respect to bases $\{x_k\}$, $\{y_k\}$, $\{z_k\}$ of the spaces in \eqref{eq:cp2-product} given by intersection points, one then has
\begin{equation} \label{eq:nc-relations-0}
\begin{aligned}
& \mu^2_{\scrA}(y_2,x_1) = z_3, &
& \mu^2_{\scrA}(y_1,x_2) = -z_3, &
& \mu^2_{\scrA}(y_3,x_2) = z_1, \\
& \mu^2_{\scrA}(y_2,x_3) = -z_1, &
& \mu^2_{\scrA}(y_1,x_3) = z_2, &
& \mu^2_{\scrA}(y_3,x_1) = -z_2, \\
& \mu^2_{\scrA}(y_1,x_1) = 0, &
& \mu^2_{\scrA}(y_2,x_2) = 0, &
& \mu^3_{\scrA}(y_3,x_3) = 0.
\end{aligned}
\end{equation}
As explained in \cite{auroux-katzarkov-orlov04}, the corresponding products $\mu^2_{\scrA_q}$ are obtained from those in \eqref{eq:nc-relations-0} by multiplying with
\begin{equation} \label{eq:rho-function}
\gamma(q) = \sum_{j=0}^{\infty} (-1)^j q^{3j(3j+1)/2} - (-1)^j q^{(3j+2)(3j+3)/2} = 1-q^3-q^6 + \cdots
\end{equation}
Since the relations between these products remain the same, the deformation is trivial (in fact, it can be trivialized by multiplying all $z_k$ with $\gamma$). Remarkably, the fact that $\mu^2_{\scrA_q}(y_k,x_k)$ vanishes does not come from the absence of holomorphic triangles, but rather from an infinite number of cancellations between their contributions, at all orders in $q$.

\begin{remark} \label{th:rigid}
There is a way of explaining the triviality of $\scrA_q$ a priori, without doing any specific computations, by looking at the deformation theory of the derived ($A_\infty$ or dg) categories
\begin{equation} \label{eq:cp2-mirror}
D^b\scrF(\pi) \iso D^b\mathit{Coh}(\bC P^2).
\end{equation}

Recall that the total space of our Lefschetz fibration is $E = (\bC^*)^2$. This contains an exact Lagrangian torus $T \subset E$, which becomes an object of $\scrF(\pi)$. Let $\scrT \subset \scrF(\pi)$ be the full $A_\infty$-subcategory consisting of only that object ($\scrT$ is formal, hence an exterior algebra in $2$ variables). Any holomorphic disc in $\bar{E}$ with boundary on $T$ must have Maslov index $0$, since the relative first Chern class vanishes. For dimension reasons, there are (generically) no such discs, hence $\scrF_q(\bar\pi)$ restricts to the trivial deformation of $\scrT$. 

The mirror of $T$ is the structure sheaf $P$ of a point in the open torus orbit of $\bC P^2$. Let $\scrP \subset D^b\mathit{Coh}(\bC P^2)$ be the full subcategory consisting only of that structure sheaf. By the Hochschild-Kostant-Rosenberg theorem, one can identify the restriction map on Hochschild cohomology,
\begin{equation}
\mathit{HH}^2(\bC P^2) \longrightarrow \mathit{HH}^2(\scrP,\scrP),
\end{equation}
with the map that takes a bi-vector field (a section of the second exterior power of the tangent bundle) to its Taylor expansion near $P$. This map is clearly injective. From the derived equivalence \eqref{eq:cp2-mirror} one sees that the corresponding restriction map
\begin{equation} \label{eq:restriction-to-formal}
\mathit{HH}^2(\scrF(\pi),\scrF(\pi)) \longrightarrow \mathit{HH}^2(\scrT,\scrT)
\end{equation}
is also injective. A formal deformation of $\scrF(\pi)$ which is nontrivial at some order $q^d$ determines a nonzero class in $\mathit{HH}^2(\scrF(\pi),\scrF(\pi))$, hence (by the injectivity of \eqref{eq:restriction-to-formal}, and some general considerations concerning deformations of $\scrT$) restricts to a nontrivial deformation of $\scrT$. Because of our previous observation concerning $\scrF_q(\bar\pi)$, that deformation must then be trivial.

The same argument applies to the mirrors of the other smooth toric Fano surfaces \cite{auroux-katzarkov-orlov04, auroux-katzarkov-orlov05, ueda06}. In higher dimensions, it is still true that smooth toric Fano varieties are rigid \cite{bien-brion96}, hence all their deformations are noncommutative ones. In principle, there are methods for proving analogues of our statement concerning Maslov index $0$ holomorphic discs \cite{fukaya-oh-ohta-ono10}. However, the geometry of the natural (toric) fibrewise compactification is much more complicated.
\end{remark}

It is also instructive to consider deformations with bulk term. Let's assume that $\bar{b}$ comes from a class in 
\begin{equation} \label{eq:h2}
H^2(\bar{E},\bar{M}; 1 + q\bC[[q]]) \iso (1+q\bC^3[[q]])^3
\end{equation}
(here, $\bar{M}$ is placed ``at infinity''). An explicit choice of isomorphism \eqref{eq:h2} is given by integrating a cohomology class over the Lefschetz thimbles, which yields a triple $(\beta_1,\beta_2,\beta_3)$ of elements of $1+q\bC[[q]]$. In the same terms, the boundary operator $H^1(\bar{M}) \rightarrow H^2(\bar{E},\bar{M})$ is given by integrating one-forms on $\bar{M}$ over vanishing cycles, hence its image is spanned by $(1,2,1)$ and $(2,1,-1)$. This means that $\bar{b} \in H^2(\bar{E}; 1+q\bC[[q]])$ is determined by
\begin{equation} \label{eq:beta-difference}
\beta_2 / (\beta_1 \beta_3) \in 1 + q\bC[[q]].
\end{equation}

When carrying out the computation of $\scrA_{q,\bar{b}}$ as the directed subcategory of $\scrB_{q,\bar{b}|\bar{M}}$, the vanishing cycle $V_k$ should be equipped with a local system having holonomy $-\beta_k$, rather than the original $-1$ (as before, we think of the local system as being trivial away from the marked points). This leads to the following deformation of \eqref{eq:nc-relations-0}:
\begin{equation} \label{eq:nc-relations}
\begin{aligned}
& \mu^2_{\scrA_{q,\bar{b}}}(y_2,x_1) = z_3 + O(q^2), &
& \mu^2_{\scrA_{q,\bar{b}}}(y_1,x_2) = - \beta_3^{-1} z_3 + O(q^2), \\
& \mu^2_{\scrA_{q,\bar{b}}}(y_3,x_2) = z_1 + O(q^2), &
& \mu^2_{\scrA_{q,\bar{b}}}(y_2,x_3) = - \beta_1^{-1} z_1 + O(q^2), \\
& \mu^2_{\scrA_{q,\bar{b}}}(y_1,x_3) = z_2 + O(q^2), &
& \mu^2_{\scrA_{q,\bar{b}}}(y_3,x_1) = - \beta_2 z_2 + O(q^2), \\
& \mu^2_{\scrA_{q,\bar{b}}}(y_1,x_1) = O(q^2), &
& \mu^2_{\scrA_{q,\bar{b}}}(y_2,x_2) = O(q^2), \\
& \mu^3_{\scrA_{q,\bar{b}}}(y_3,x_3) = O(q^2).
\end{aligned}
\end{equation}
Consider solutions of the Maurer-Cartan equation
\begin{equation}
\mu^2_{\scrA_{q,\bar{b}}}(\eta_1y_1+\eta_2y_3+\eta_3y_3, \xi_1 x_1 + \xi_2x_2 + \xi_3x_3) = 0,
\end{equation}
with coefficients $\eta_k,\xi_k$ which are functions of $q$, and which have nonzero constant terms. From \eqref{eq:nc-relations}, one gets
\begin{equation} \label{eq:mc-relations}
\begin{aligned}
& \textstyle \frac{\eta_2}{\eta_1} = \beta_3^{-1} \frac{\xi_2}{\xi_1} + O(q^2), &
& \textstyle \frac{\eta_3}{\eta_2} = \beta_1^{-1} \frac{\xi_3}{\xi_2} + O(q^2), &
& \textstyle \frac{\eta_1}{\eta_3} = \beta_2 \frac{\xi_1}{\xi_3} + O(q^2).
\end{aligned}
\end{equation}
If $\bar{b}$ is nontrivial to first order, which means that \eqref{eq:beta-difference} does not lie in $1+O(q^2)$, the equations \eqref{eq:mc-relations} are not solvable. Since solutions of Maurer-Cartan equations transfer across isomorphisms, it then follows that $\scrA_{q,\bar{b}}$ is nontrivial.

\subsection{The anticanonical pencil on the cubic surface\label{subsec:cubic-pencil}}
The previous example is still not a Lefschetz pencil, but one can enlarge it to one, by adding $9$ more vanishing cycles: more precisely, three copies of each of the three disjoint curves shown in Figure \ref{fig:add9} (the ordering within that group of $9$ is irrelevant). The resulting basis of $12$ vanishing cycles is what one gets from the anticanonical Lefschetz pencil on a cubic surface (the projective plane blown up at $6$ general points). 

It turns out that the deformation $\scrA_q$ is still trivial (which of course is strictly stronger than our previous observation). Instead of addressing this by direct computation, we will give a mirror symmetry explanation, which is not quite rigorous (it is a degenerate version of  arguments from \cite{auroux-katzarkov-orlov05}).
\begin{figure}
\begin{centering}
\begin{picture}(0,0)%
\includegraphics{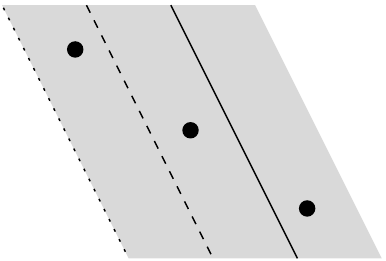}%
\end{picture}%
\setlength{\unitlength}{3552sp}%
\begingroup\makeatletter\ifx\SetFigFont\undefined%
\gdef\SetFigFont#1#2#3#4#5{%
  \reset@font\fontsize{#1}{#2pt}%
  \fontfamily{#3}\fontseries{#4}\fontshape{#5}%
  \selectfont}%
\fi\endgroup%
\begin{picture}(2038,1374)(1189,-1123)
\end{picture}%
\caption{\label{fig:add9}}
\end{centering}
\end{figure}%

For a complex number $q$, $0<|q|<1$, consider the following elliptic curve $Y_q$ together with a degree $3$ divisor $Z_q$:
\begin{align}
& Y_q = \bC/(\bZ \oplus \textstyle\frac{\log(q)}{2\pi i} \bZ), \\
& Z_q = \{0,1/3,2/3\} = (\textstyle\frac{1}{3}\bZ)/\bZ \subset Y_q. \label{eq:z-divisor}
\end{align}
The associated line bundle $\scrO_{Y_q}(Z_q)$ gives rise to an embedding
\begin{equation} \label{eq:projective-embedding}
Y_q \hookrightarrow \bP(H^0(Y_q,\scrO_{Y_q}(Z_q))^\vee) \iso \bC P^2. 
\end{equation}
Blow up the image of $Z_q$ under that embedding, and lift the embedding to the blowup. Then, carry out the same process two more times. The outcome is a rational elliptic surface $X_q$, into which $Y_q$ is embedded with selfintersection $0$. The derived category of $X_q$ carries a full exceptional collection of $12$ objects \cite{bondal-orlov95}, whose restriction to $Y_q$ is mirror to our basis of vanishing cycles on the (compact) torus with symplectic area $-\log(q)$. In particular, if we temporarily pretend that $q$ can be treated as a complex number on the symplectic geometry side, $\scrF_q(\bar\pi)$ would be derived equivalent to the category of coherent sheaves on $X_q$.
\begin{figure}
\begin{centering}
\begin{picture}(0,0)%
\includegraphics{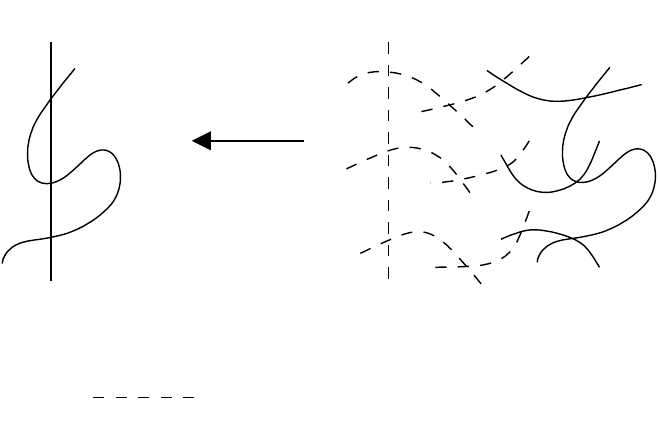}%
\end{picture}%
\setlength{\unitlength}{3552sp}%
\begingroup\makeatletter\ifx\SetFigFont\undefined%
\gdef\SetFigFont#1#2#3#4#5{%
  \reset@font\fontsize{#1}{#2pt}%
  \fontfamily{#3}\fontseries{#4}\fontshape{#5}%
  \selectfont}%
\fi\endgroup%
\begin{picture}(3512,2236)(1529,-1325)
\put(2026,-1036){\makebox(0,0)[lb]{\smash{{\SetFigFont{10}{12}{\rmdefault}{\mddefault}{\updefault}{\color[rgb]{0,0,0}numbers are selfintersections,}%
}}}}
\put(1726,764){\makebox(0,0)[lb]{\smash{{\SetFigFont{10}{12}{\rmdefault}{\mddefault}{\updefault}{\color[rgb]{0,0,0}$1$}%
}}}}
\put(5026,-286){\makebox(0,0)[lb]{\smash{{\SetFigFont{10}{12}{\rmdefault}{\mddefault}{\updefault}{\color[rgb]{0,0,0}$0$}%
}}}}
\put(2176,-286){\makebox(0,0)[lb]{\smash{{\SetFigFont{10}{12}{\rmdefault}{\mddefault}{\updefault}{\color[rgb]{0,0,0}$9$}%
}}}}
\put(4726,-586){\makebox(0,0)[lb]{\smash{{\SetFigFont{10}{12}{\rmdefault}{\mddefault}{\updefault}{\color[rgb]{0,0,0}$-1$}%
}}}}
\put(4726,164){\makebox(0,0)[lb]{\smash{{\SetFigFont{10}{12}{\rmdefault}{\mddefault}{\updefault}{\color[rgb]{0,0,0}$-1$}%
}}}}
\put(4951,464){\makebox(0,0)[lb]{\smash{{\SetFigFont{10}{12}{\rmdefault}{\mddefault}{\updefault}{\color[rgb]{0,0,0}$-1$}%
}}}}
\put(2626,-1261){\makebox(0,0)[lb]{\smash{{\SetFigFont{10}{12}{\rmdefault}{\mddefault}{\updefault}{\color[rgb]{0,0,0}= selfintersection $-2$}%
}}}}
\end{picture}%
\caption{\label{fig:blowups}}
\end{centering}
\end{figure}

The key observation is that $X = X_q$ is actually independent of $q$. By definition of \eqref{eq:projective-embedding}, the image of $Z_q$ consists of three collinear points in $\bC P^2$ (the left side of Figure \ref{fig:blowups} shows the image of $Y_q$, and the line through the image of $Z_q$; the right side shows what happens after repeatedly blowing up). Specifically for our choice \eqref{eq:z-divisor}, we have linear equivalences
\begin{equation}
Z_q \sim 3 \cdot \{0\} \sim 3 \cdot \{1/3\} \sim 3 \cdot \{2/3\}.
\end{equation}
This means that for each point of $Z_q$, there is a section of $\scrO_{Y_q}(Z_q)$ which vanishes to third order at that point. If we use those sections as a basis of our linear system, the image of $Y_q$ under \eqref{eq:projective-embedding} will intersect the coordinate lines at the points of $Z_q$, and each time with intersection multiplicity $3$. As a consequence, the repeated blowups used to form $X_q$ are carried out on the proper transforms of the coordinate lines, hence are the same for all $q$.

\begin{remark}
To make this argument into a rigorous proof of the triviality of $\scrA_q$, one would need to modify it in two ways. First, $q$ needs to be treated as a formal variable, rather than as a complex number. Second, one has to extend it to $q = 0$, where the elliptic curve degenerates to the union of coordinate lines (the corresponding version of homological mirror symmetry is well-known \cite{sibilla11, lekili-perutz11}).
\end{remark}

\begin{remark} \label{th:auroux}
As one sees from Figure \ref{fig:blowups}, $X$ contains an $\tilde{E}_6$ configuration of $(-2)$-spheres. The mirror is a configuration of $8$ Lagrangian spheres in the affine cubic surface, which is a basis of vanishing cycles for the $T_{3,3,3}$ singularity \cite[Section 4.2]{keating14}. $X$ also contains another $\tilde{A}_2$ configuration, disjoint from the previous one, namely the proper transforms of the coordinate lines (if there are mirror spheres to those, they need to lie in $\bar{E}$, which would give rise to a phenomenon similar to that in Remark \ref{th:matching}). The existence of those two singular elliptic fibres characterizes $X$ uniquely. It is an extremal elliptic surface \cite{persson90}, $X = X_{431}$ in the notation from \cite[p.~77]{miranda89}.
\end{remark}

\subsection{The anticanonical pencil on $\bC P^2$\label{subsec:cp2-pencil}}
Figure \ref{fig:12} shows a basis of vanishing cycles for the anticanonical (degree $3$) pencil on $\bC P^2$, where $M$ is the nine-punctured torus. The cycles are labeled as $V_{k,t}$ for $k \in \{1,2,3,4\}$ and $t \in \{\frac{1}{2},\frac{3}{2},\frac{5}{2}\}$, and should be ordered nondecreasingly in $k$ (within each group with fixed $k$, the ordering is irrelevant). The remark about filling in Figure \ref{fig:12}, and the shaded triangles, should be ignored for now.

In this case, the deformation $\scrA_q$ is once more trivial. One could give a mirror symmetry argument along the lines of the previous one (the mirror is the extremal elliptic surface $X_{9111}$). However, we will instead proceed by direct computation, which is easier this time since no two vanishing cycles are isotopic.

\begin{figure}
\begin{centering}
\begin{picture}(0,0)%
\includegraphics{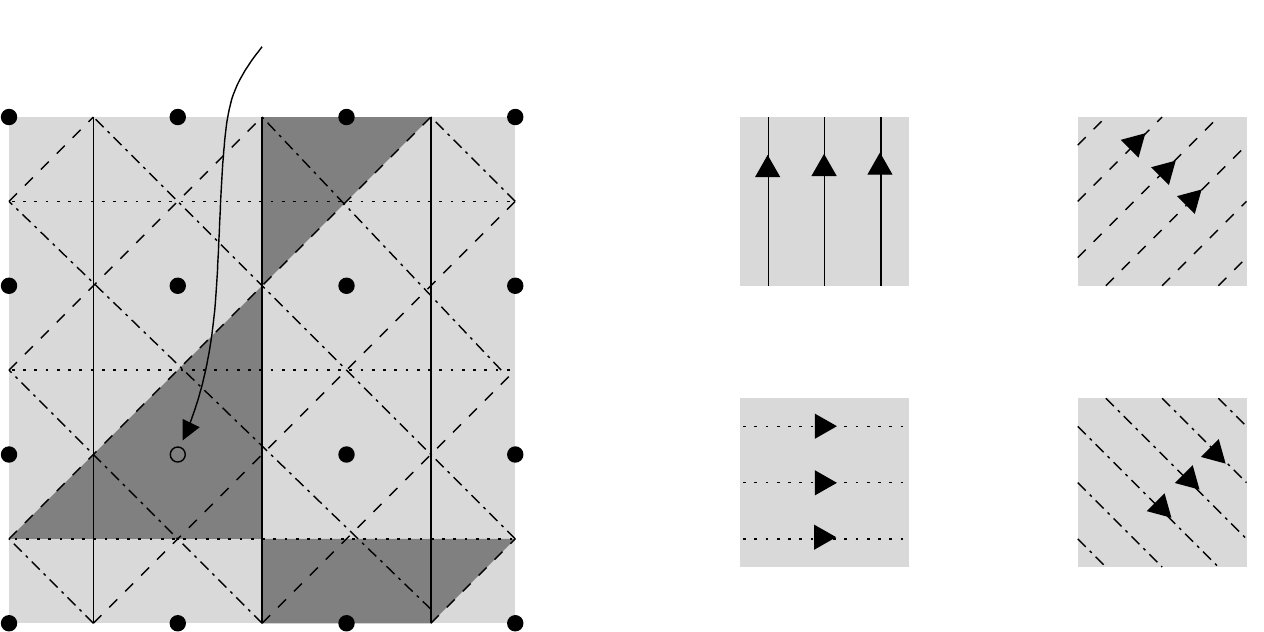}%
\end{picture}%
\setlength{\unitlength}{3552sp}%
\begingroup\makeatletter\ifx\SetFigFont\undefined%
\gdef\SetFigFont#1#2#3#4#5{%
  \reset@font\fontsize{#1}{#2pt}%
  \fontfamily{#3}\fontseries{#4}\fontshape{#5}%
  \selectfont}%
\fi\endgroup%
\begin{picture}(6738,3356)(-1997,-1908)
\put(-749,1289){\makebox(0,0)[lb]{\smash{{\SetFigFont{10}{12}{\rmdefault}{\mddefault}{\updefault}{\color[rgb]{0,0,0}(to be filled in)}%
}}}}
\put(2251,914){\makebox(0,0)[lb]{\smash{{\SetFigFont{9}{10.8}{\rmdefault}{\mddefault}{\updefault}{\color[rgb]{0,0,0}$(1,\frac32)$}%
}}}}
\put(1951,-241){\makebox(0,0)[lb]{\smash{{\SetFigFont{9}{10.8}{\rmdefault}{\mddefault}{\updefault}{\color[rgb]{0,0,0}$(1,\half)$}%
}}}}
\put(2551,-241){\makebox(0,0)[lb]{\smash{{\SetFigFont{9}{10.8}{\rmdefault}{\mddefault}{\updefault}{\color[rgb]{0,0,0}$(1,\frac 52)$}%
}}}}
\put(2926,-860){\makebox(0,0)[lb]{\smash{{\SetFigFont{9}{10.8}{\rmdefault}{\mddefault}{\updefault}{\color[rgb]{0,0,0}$(3,\frac52)$}%
}}}}
\put(2930,-1167){\makebox(0,0)[lb]{\smash{{\SetFigFont{9}{10.8}{\rmdefault}{\mddefault}{\updefault}{\color[rgb]{0,0,0}$(3,\frac32)$}%
}}}}
\put(2926,-1460){\makebox(0,0)[lb]{\smash{{\SetFigFont{9}{10.8}{\rmdefault}{\mddefault}{\updefault}{\color[rgb]{0,0,0}$(3,\frac12)$}%
}}}}
\put(4726,-1186){\makebox(0,0)[lb]{\smash{{\SetFigFont{9}{10.8}{\rmdefault}{\mddefault}{\updefault}{\color[rgb]{0,0,0}$(4,\frac32)$}%
}}}}
\put(4726,-886){\makebox(0,0)[lb]{\smash{{\SetFigFont{9}{10.8}{\rmdefault}{\mddefault}{\updefault}{\color[rgb]{0,0,0}$(4,\frac52)$}%
}}}}
\put(4726,-1486){\makebox(0,0)[lb]{\smash{{\SetFigFont{9}{10.8}{\rmdefault}{\mddefault}{\updefault}{\color[rgb]{0,0,0}$(4,\frac12)$}%
}}}}
\put(4718,640){\makebox(0,0)[lb]{\smash{{\SetFigFont{9}{10.8}{\rmdefault}{\mddefault}{\updefault}{\color[rgb]{0,0,0}$(2,\frac52)$}%
}}}}
\put(4718, 40){\makebox(0,0)[lb]{\smash{{\SetFigFont{9}{10.8}{\rmdefault}{\mddefault}{\updefault}{\color[rgb]{0,0,0}$(2,\frac12)$}%
}}}}
\put(4717,341){\makebox(0,0)[lb]{\smash{{\SetFigFont{9}{10.8}{\rmdefault}{\mddefault}{\updefault}{\color[rgb]{0,0,0}$(2,\frac32)$}%
}}}}
\end{picture}%
\caption{\label{fig:12}}
\end{centering}
\end{figure}%
\begin{figure}
\begin{centering}
\begin{picture}(0,0)%
\includegraphics{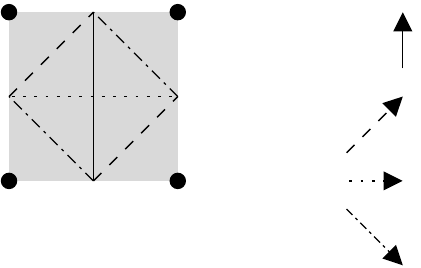}%
\end{picture}%
\setlength{\unitlength}{3552sp}%
\begingroup\makeatletter\ifx\SetFigFont\undefined%
\gdef\SetFigFont#1#2#3#4#5{%
  \reset@font\fontsize{#1}{#2pt}%
  \fontfamily{#3}\fontseries{#4}\fontshape{#5}%
  \selectfont}%
\fi\endgroup%
\begin{picture}(2238,1409)(-1997,-523)
\put(226,-136){\makebox(0,0)[lb]{\smash{{\SetFigFont{10}{12}{\rmdefault}{\mddefault}{\updefault}{\color[rgb]{0,0,0}$3$}%
}}}}
\put(226,164){\makebox(0,0)[lb]{\smash{{\SetFigFont{10}{12}{\rmdefault}{\mddefault}{\updefault}{\color[rgb]{0,0,0}$2$}%
}}}}
\put(226,614){\makebox(0,0)[lb]{\smash{{\SetFigFont{10}{12}{\rmdefault}{\mddefault}{\updefault}{\color[rgb]{0,0,0}$1$}%
}}}}
\put(226,-436){\makebox(0,0)[lb]{\smash{{\SetFigFont{10}{12}{\rmdefault}{\mddefault}{\updefault}{\color[rgb]{0,0,0}$4$}%
}}}}
\end{picture}%
\caption{\label{fig:12-quotient}}
\end{centering}
\end{figure}

\begin{remark} \label{th:z3-action}
There is a formal relationship between this example and that in Section \ref{subsec:cubic-pencil}. Returning to Figure \ref{fig:cp2-mirror}, note that the three curves drawn there span an index $3$ subgroup of $H_1(\bar{M}) = \bZ^2$. Hence, there is a unique $\bZ/3$-cover of $\bar{M}$ which is trivial when restricted to each curve. That cover is nontrivial on the curves from Figure \ref{fig:add9}. On that cover, the $9$ preimages of the curves from Figure \ref{fig:cp2-mirror}, together with the preimage of each of the $3$ curves in Figure \ref{fig:add9} (without repetition), form the collection shown in Figure \ref{fig:12} (the lack of immediate visual resemblance comes from drawing the torus differently). On the level of $A_\infty$-structures $\scrA_q$, this means that the one currently under consideration is obtained from the previous one by taking the semidirect product with respect to an action of $\bZ/3$ (hence, triviality of the deformation in one case can in fact be derived from that in the other one). Dually, the mirror $X_{9111}$ carries a $\bZ/3$-action whose resolved quotient is the previously considered $X_{431}$.
\end{remark}

Consider first the simpler situation from Figure \ref{fig:12-quotient}: a collection of four curves $\{V_k\}$ on a once-punctured torus. Each curve should be equipped with a line bundle whose holonomy is $\zeta_k$, where 
\begin{equation} \label{eq:minus-root}
\zeta_k^3 = -1. 
\end{equation}
This time, we want to think of it as the trivial line bundle with a constant connection given by a choice of logarithm $u_k = \log(\zeta_k)/2\pi i \in \frac16 + \frac13 \bZ$ (and use that logarithm to determine roots of $\zeta_k$, which will appear in the formulae below). Let's denote the resulting objects by $V_{k,\zeta_k}$. When counting holomorphic polygons, the effect is that a side of a triangle which covers a fraction $\phi$ of the curve $V_{k,\zeta_k}$ will contribute a holonomy term $e^{2\pi i u_k \phi}$.

\begin{prerequisites}
Two of the Jacobi theta-functions are
\begin{equation}
\begin{aligned}
& \theta_2(u,q) = \sum_{d \in \bZ + \half} e^{2\pi i ud} q^{d^2} =
(e^{\pi i u} + e^{-\pi i u})q^{1/4} + O(q^{9/4}), \\
& \theta_3(u,q) = \sum_{d \in \bZ} e^{2\pi i u d} q^{d^2} =
1 + O(q).
\end{aligned}
\end{equation}
We will need the obvious periodicity properties
\begin{equation} \label{eq:theta-symmetry}
\begin{aligned}
& \theta_2(u+1,q) = -\theta_2(u,q), \\
& \theta_3(u+1,q) = \theta_3(u,q)
\end{aligned}
\end{equation}
as well as Watson's identity (see e.g.\ \cite[p.~130--131]{mckean-moll99})
\begin{equation} \label{eq:watson}
\theta_3(u+v,q)\theta_2(u-v,q) + \theta_2(u+v,q)\theta_3(u-v,q) = \theta_2(u,q^{1/2})\theta_2(v,q^{1/2}).
\end{equation}
Finally, note the following special value:
\begin{equation} \label{eq:special-value}
\theta_2(u,q^{1/2}) = (e^{\pi i u} + e^{-\pi i u}) q^{1/8}\gamma(q) \quad
\text{if $\textstyle u \in \frac{1}{6} + \frac{1}{3}\bZ$, with $\gamma$ as in \eqref{eq:rho-function}}.
\end{equation}
\end{prerequisites}

In Figure \ref{fig:12-quotient}, let's consider the Floer-theoretic products
\begin{align} 
& \label{eq:123-product}
\mathit{CF}^*(V_{2,\zeta_2}, V_{3,\zeta_3}) \otimes \mathit{CF}^*(V_{1,\zeta_1},V_{2,\zeta_2}) \iso \bC \longrightarrow \mathit{CF}^*(V_{1,\zeta_1}, V_{3,\zeta_3}) \iso \bC, \\
& \label{eq:134-product}
\mathit{CF}^*(V_{3,\zeta_3}, V_{4,\zeta_4}) \otimes \mathit{CF}^*(V_{1,\zeta_1},V_{3,\zeta_3}) \iso \bC \longrightarrow \mathit{CF}^*(V_{1,\zeta_1}, V_{4,\zeta_4}) \iso \bC, \\
& \label{eq:124-product}
\mathit{CF}^*(V_{2,\zeta_2}, V_{4,\zeta_4}) \otimes \mathit{CF}^*(V_{1,\zeta_1},V_{2,\zeta_2}) \iso \bC^2 \longrightarrow \mathit{CF}^*(V_{1,\zeta_1}, V_{4,\zeta_4}) \iso \bC, \\
& \label{eq:234-product}
\mathit{CF}^*(V_{3,\zeta_3}, V_{4,\zeta_4}) \otimes \mathit{CF}^*(V_{2,\zeta_2},V_{3,\zeta_3}) \iso \bC \longrightarrow \mathit{CF}^*(V_{2,\zeta_2}, V_{4,\zeta_4}) \iso \bC^2
\end{align}
and their $q$-deformations. These can be determined combinatorially by triangle-counting. In the natural basis given by the intersection points, the outcome is given by, respectively,
\begin{align} 
&
q^{-1/8} \theta_2(u_1 - u_2 + u_3, q^{1/2}) = (\zeta_1^{1/2}\zeta_2^{-1/2}\zeta_3^{1/2} + \zeta_1^{-1/2}\zeta_2^{1/2}\zeta_3^{-1/2}) \gamma(q) \\
&
q^{-1/8} \theta_2(u_1 - u_3 + u_4, q^{1/2}) = (\zeta_1^{1/2}\zeta_3^{-1/2}\zeta_4^{1/2} + \zeta_1^{-1/2}\zeta_3^{1/2}\zeta_4^{-1/2}) \gamma(q), \\
&
\big(\theta_3(2u_1 - u_2 + u_4,q), q^{-1/4} \theta_2(2u_1 - u_2 + u_4, q)\big), \\
&
\begin{pmatrix} 
q^{-1/4} \theta_2(u_2 - 2u_3 + u_4,q) \\
\theta_3(u_2 - 2u_3 + u_4,q)
\end{pmatrix}
\end{align}
(we have used \eqref{eq:special-value} in the first two equations). Again using \eqref{eq:special-value}, together with \eqref{eq:theta-symmetry} and \eqref{eq:watson}, we note that
\begin{equation} \label{eq:watson-again}
\begin{aligned} 
&
\begin{aligned}
& \theta_3(2u_1 - u_2 + u_4,q)\theta_2(u_2+u_4,q) -
\theta_2(2u_1 - u_2 + u_4,q)\theta_3(u_2+u_4,q) \\ & \qquad =
\theta_2(u_1+u_4 + \textstyle \half ,q^{1/2})\theta_2(u_1-u_2+\textstyle\half,q^{1/2}) \\
& \qquad = -(\zeta_1^{1/2}\zeta_4^{1/2} - \zeta_1^{-1/2}\zeta_4^{-1/2})
(\zeta_1^{1/2}\zeta_2^{-1/2} - \zeta_1^{-1/2}\zeta_2^{1/2})
 q^{1/4} \gamma(q)^2,
\end{aligned} \\ 
&
\begin{aligned}
& \theta_3(2u_1-u_2+u_4,q)\theta_2(u_4-u_2,q) +
\theta_2(2u_1-u_2+u_4,q)\theta_3(u_4-u_2),q) \\
& \qquad = \theta_2(u_1-u_2+u_4,q^{1/2})\theta_2(u_1,q^{1/2})
\\ & \qquad
(\zeta_1^{1/2}\zeta_2^{-1/2}\zeta_4^{1/2}+\zeta_1^{-1/2}\zeta_2^{1/2}\zeta_4^{-1/2})
(\zeta_1^{1/2} + \zeta_1^{-1/2})  q^{1/4}\gamma(q)^2.
\end{aligned}
\end{aligned}
\end{equation} 

Let's work with respect to the following modified generators of the Floer cochain complexes (tensored with $\bC[[q]]$). We multiply the given generator of $\mathit{CF}^*(V_{1,\zeta_1},V_{2,\zeta_2})[[q]]$ by $1/\gamma$, and the generator of $\mathit{CF}^*(V_{1,\zeta_1},V_{4,\zeta_4})[[q]]$ with $\gamma$. So far, the outcome is that when written out in terms of the new generators, the formulae for \eqref{eq:123-product} and \eqref{eq:134-product} gets divided by $\gamma$ (hence become independent of $q$), whereas \eqref{eq:124-product} gets divided by $\gamma^2$. Finally, let's change the given generators of $\mathit{CF}^*(V_{2,\zeta_2},V_{4,\zeta_4})$ by applying the matrix
\begin{equation} \label{eq:basis-change}
\begin{aligned} &
\begin{pmatrix}
q^{-1/4}\theta_2(u_2+u_4,q) & q^{-1/4}\theta_2(u_4-u_2,q) \\
-\theta_3(u_2+u_4,q) & \theta_3(u_4-u_2,q)
\end{pmatrix} \\ & \qquad \qquad =
\begin{pmatrix}
\zeta_2^{1/2}\zeta_4^{1/2} + \zeta_2^{-1/2}\zeta_4^{-1/2} & \zeta_4^{1/2}\zeta_2^{-1/2} + \zeta_4^{-1/2}\zeta_2^{1/2} \\
-1 & 1 
\end{pmatrix} + O(q).
\end{aligned}
\end{equation}
Using \eqref{eq:watson-again}, one sees that after this further change of generators, \eqref{eq:134-product} turns into (the $q$-independent expression)
\begin{equation}
\begin{aligned}
& \big( -(\zeta_1^{1/2}\zeta_4^{1/2} - \zeta_1^{-1/2}\zeta_4^{-1/2})
(\zeta_1^{1/2}\zeta_2^{-1/2} - \zeta_1^{-1/2}\zeta_2^{1/2}), \\ & \qquad \qquad (\zeta_1^{1/2}\zeta_2^{-1/2}\zeta_4^{1/2}+\zeta_1^{-1/2}\zeta_2^{1/2}\zeta_4^{-1/2})
(\zeta_1^{1/2} + \zeta_1^{-1/2})
\big).
\end{aligned}
\end{equation}
There is a pitfall: the determinant of \eqref{eq:basis-change} is 
\begin{equation}
q^{-1/4} \theta_2(u_2,q^{1/2})\theta_2(u_4,q^{1/2}) = 
(\zeta_2^{1/2} + \zeta_2^{-1/2})(\zeta_4^{1/2} + \zeta_4^{-1/2}) \gamma(q)^2,
\end{equation}
hence the matrix is invertible only if $\zeta_2,\zeta_4 \neq -1$. However, one can replace $(u_2,u_4)$ in \eqref{eq:watson-again} by $(u_2+\frac{1}{3},u_4+\frac{1}{3})$ or $(u_2+\frac{2}{3},u_4+\frac{2}{3})$, and cover all cases in this way. To summarize, we have now shown that in suitable generators, the products \eqref{eq:123-product}--\eqref{eq:124-product} become constant in $q$. Because of associativity, it then follows automatically that \eqref{eq:234-product} is constant in $q$ as well (or one can check that by direct computation).

So far, we have been a bit vague about the formal context in which this computation takes place. What we want to do is to order the $12$ possible $V_{k,\zeta_k}$ nondecreasingly in $k$, and consider the associated directed $A_\infty$-category, which we denote by $\tilde\scrA_q$. Using the fact that the Floer cohomology between a given vanishing cycle with different choices of holonomy is zero, we can simplify the definition by setting 
\begin{equation}
\mathit{hom}_{\tilde{\scrA}_q}(V_{k,\zeta_k},V_{k,\zeta_k'}) = 0 \quad\text{if }\zeta_k \neq \zeta_k'. 
\end{equation}
Then, the argument above shows that $\tilde{\scrA}_q$ is trivial as an $A_\infty$-deformation. There is a natural action of $(\bZ/3)^2$ on this category, which tensors all objects with the restriction of line bundles on the torus having holonomies which are third roots of unity. After taking the semidirect product, we recover a category which is Morita equivalent to that associated to the cubic pencil on $\bC P^2$: more precisely,
\begin{equation} \label{eq:morita}
\tilde{\scrA}_q \rtimes (\bZ/3)^2 \iso \scrA_q \otimes \mathit{End}(\bC^3).
\end{equation}
This is the same trick mentioned in Remark \ref{th:z3-action} (we refer to \cite{seidel03b, seidel08b, sheridan11b}, where parallel arguments appear, for further explanation). The triviality of the deformation $\tilde{\scrA}_q$ implies its equivariant triviality (a general property of finite group actions in characteristic $0$) and, via \eqref{eq:morita}, the triviality of $\scrA_q$.

\subsection{The anticanonical pencil on the Hirzebruch surface $F_1$\label{subsec:f1}}
Our final example is the Lefschetz fibration obtained from the anticanonical Lefschetz pencil on $F_1$ ($\bC P^2$ blown up at one point). This can be obtained from its counterpart for $\bC P^2$ by filling in one of the punctures in the fibre $M$, which we choose as shown in Figure \ref{fig:12} ($\bar{M}$ remains the same, but $\delta M$ loses a point). To be precise, this ``filling in'' process changes the notion of exact Lagrangian submanifold in $M$, hence the vanishing cycles will move around slightly, but that will not affect our computation.

It will turn out that in this case, the deformation $\scrA_q$ is nontrivial already to first order in $q$. For that, consider the one-dimensional spaces
\begin{equation} \label{eq:one-dim}
\mathit{CF}^*(V_{k_1,t_1},V_{k_2,t_2})\;\; \text{with }
(k_1,k_2) \in \{(1,2), (1,3), (1,4), (2,3), (3,4)\}.
\end{equation}
The products
\begin{equation} \label{eq:36-products}
\begin{aligned}
& \mathit{CF}^*(V_{k_2,t_2},V_{k_3,t_3}) \otimes
\mathit{CF}^*(V_{k_1,t_1},V_{k_2,t_2}) \longrightarrow
\mathit{CF}^*(V_{k_1,t_1},V_{k_3,t_3}), \\[-.1em]
& \text{where all morphism spaces are as in \eqref{eq:one-dim}, and} 
\; t_3 \equiv t_1+t_2 \pm \textstyle\frac{1}{2} \; \mathrm{mod} \; 3,
\end{aligned}
\end{equation}
are all nonzero, and remain unchanged under deformation to first order in $q$. In contrast, the product
\begin{equation} \label{eq:nontrivial-product}
\mathit{CF}^*(V_{2,5/2},V_{3,3/2}) \otimes \mathit{CF}^*(V_{1,1/2},V_{2,5/2}) \longrightarrow
\mathit{CF}^*(V_{1,1/2},V_{3,3/2})
\end{equation}
deforms nontrivially to first order: it is a nonzero constant times $1-q + O(q^2)$ (the two relevant triangles are shaded in Figure \ref{fig:12}). 

Suppose that $\scrA_q$ was trivial to first order. This means that its first order part represents the trivial class in the Hochschild cohomology $\mathit{HH}^2(\scrA,\scrA)$. Hence, this part must be the boundary of a degree $1$ Hochschild cochain $\epsilon$. Concretely, $\epsilon$ consists of endomorphisms $\epsilon(V_{k_1,t_1}, V_{k_2,t_2})$ of each morphism space in $\scrA$. If one only considers only the spaces with \eqref{eq:one-dim}, $45$ coefficients of $\epsilon$ are involved. The fact that the products \eqref{eq:36-products} have trivial $q$ term imposes $36$ linear conditions. An explicit computation shows that $34$ of those conditions are linearly independent, leaving an $11$-dimensional space of choices for our coefficients. However, all those choices can be realized by boundaries of degree $0$ Hochschild cochains, which means $\epsilon(V_{k_1,t_1},V_{k_2,t_2}) = \delta(V_{k_1,t_1}) - \delta(V_{k_2,t_2})$. Hence, by adding a suitable boundary to $\epsilon$, we may assume that it vanishes on all the spaces \eqref{eq:one-dim}. But that contradicts the condition coming from \eqref{eq:nontrivial-product}, which is that
\begin{equation}
\epsilon(V_{1,1/2},V_{3,3/2}) - \epsilon(V_{1,1/2},V_{2,5/2}) - \epsilon(V_{2,5/2},V_{3,3/2}) \neq 0.
\end{equation}

\section{\label{sec:apply-theorem}Applications of Theorem \ref{th:main}}

\subsection{Monodromy considerations\label{subsec:monodromy}}
We will consider a class of examples that includes the Lefschetz pencils from Section \ref{sec:elementary}, but this time making use of Theorem \ref{th:main}. Start with a Lefschetz pencil on a del Pezzo surface $\roundbar{E}$. Blowing up its base locus yields a rational elliptic surface $\cornerbar{E}$ (topologically, $\bC P^2$ blown up at $9$ points). Let's write
\begin{equation} \label{eq:blowup-basis}
H_2(\cornerbar{E}) = \bZ L \oplus \bZ A_0 \oplus \cdots \oplus \bZ A_8,
\end{equation}
with the intersection form $\mathrm{diag}(1,-1,\dots,-1)$. One can choose the basis so that
\begin{align}
& [\bar{M}] = 3L - A_0 - \cdots - A_8, \\
& [\delta E|] = \begin{cases} A_0 + \cdots + A_{d-1} & \text{if $\roundbar{E}$ is $\bC P^2$ blown up at $9-d$ points,} \\
A_0 + \cdots + A_6 + (L - A_7 - A_8) & \text{if $\roundbar{E} \iso \bC P^1 \times \bC P^1$.}
\end{cases}
\end{align} 
Generally, we write $d$ for the number of connected components of $\delta E|$.

\begin{lemma} \label{th:monodromy}
There are diffeomorphisms of $\cornerbar{E}$, which preserve the deformation class of the symplectic form, and which realize the following automorphisms of homology: (i) any permutation of $\{A_0,\dots,A_8\}$; (ii) the reflection 
\begin{equation} \label{eq:reflection}
X \longmapsto X + (X \cdot S)S, \quad \text{where $S = L - A_6 - A_7 - A_8$.}
\end{equation}
\end{lemma}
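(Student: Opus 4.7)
The plan is to construct the required diffeomorphisms from classical birational geometry of $\bC P^2$, then verify the deformation-class hypothesis afterwards. I will work with a smooth model realizing $\cornerbar{E}$ as the symplectic blow-up of $\bC P^2$ at nine distinct points $p_0, \dots, p_8$ in general position, with small blow-up sizes, so that the exceptional divisor over $p_i$ represents $A_i$ and $L$ is the proper transform of a generic line.

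For (i), I will realize each transposition $(i\,j) \in S_9$ by an isotopy of the configuration of blow-up centers. Given a smooth path $(p_0(t), \dots, p_8(t))$, $t \in [0,1]$, of nine-tuples of pairwise distinct points beginning at the initial configuration and ending at its $(i\,j)$-permutation, the corresponding fibrewise symplectic blow-up of fixed size produces a smooth family of symplectic manifolds $(\cornerbar{E}_t, \omega_t)$, all diffeomorphic to $\cornerbar{E}$. Trivializing the family gives a self-diffeomorphism $\phi$ inducing $A_i \leftrightarrow A_j$ and fixing the remaining generators of $H_2$, and the path $\omega_t$ witnesses that $\phi^*\omega$ is deformation equivalent to $\omega$. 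Composing transpositions produces any $\sigma \in S_9$.

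For (ii), I will use the standard quadratic Cremona transformation. Arrange coordinates on $\bC P^2$ so that $p_6 = [1:0:0]$, $p_7 = [0:1:0]$, $p_8 = [0:0:1]$, and pick $p_0, \dots, p_5$ in general position inside $U := \bC P^2 \setminus (\ell_{67} \cup \ell_{68} \cup \ell_{78})$. The birational involution $\mathrm{cr} \colon [x_0 : x_1 : x_2] \mapsto [x_1 x_2 : x_0 x_2 : x_0 x_1]$ has indeterminacy exactly $\{p_6, p_7, p_8\}$ and lifts to a biholomorphic involution $\widetilde{\mathrm{cr}}$ of the three-point blow-up of $\bC P^2$ at those points. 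A direct computation shows that $\widetilde{\mathrm{cr}}_*$ sends $L \mapsto 2L - A_6 - A_7 - A_8$ and $A_i \mapsto L - A_j - A_k$ for $\{i,j,k\} = \{6,7,8\}$, which is exactly the reflection $r_S$ of \eqref{eq:reflection}. Setting $q_i := \mathrm{cr}(p_i) \in U$ for $i \leq 5$, $\widetilde{\mathrm{cr}}$ extends to a biholomorphism from the nine-point blow-up at $(p_0, \dots, p_8)$ onto the nine-point blow-up at $(q_0, \dots, q_5, p_6, p_7, p_8)$. Composing with a diffeomorphism supplied by (i) from an isotopy of $(q_0, \dots, q_5)$ back to $(p_0, \dots, p_5)$ inside the ordered configuration space of six distinct points of $U$ yields the desired $\phi$.

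The main obstacle is verifying that the diffeomorphism from (ii) preserves the deformation class of $\omega$. Since $\widetilde{\mathrm{cr}}$ is a biholomorphism, and the isotopy step from (i) is deformation-trivial by construction, $\phi^*\omega$ is still a K{\"a}hler form; however, its cohomology class is twisted by $r_S$. To bridge the gap I will appeal to the description of the symplectic cone for a rational surface (Taubes, Li--Liu): it is a connected cone inside the positive cone of $H^2(\cornerbar{E};\bR)$, cut out by sign conditions involving $c_1(\cornerbar{E}) = [\bar{M}]$. The reflection $r_S$ preserves the intersection form and fixes $[\bar{M}]$ --- note that $[\bar{M}] \cdot S = 3 - 3 = 0$ --- so it preserves the symplectic cone. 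Therefore $[\phi^*\omega]$ and $[\omega]$ lie in the same component of that cone and can be joined by a linear path of K{\"a}hler classes, which lifts to a smooth path of symplectic forms from $\phi^*\omega$ back to $\omega$, completing the verification.
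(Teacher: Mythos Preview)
Your argument is correct. Part (i) is essentially the paper's own argument, phrased as a family of symplectic blow-ups rather than a lift of a symplectomorphism of $\bC P^2$ permuting the centers; these are equivalent. Part (ii) is genuinely different: the paper degenerates the configuration so that $p_6,p_7,p_8$ become collinear, obtains a holomorphic $(-2)$-sphere in class $S$, contracts it to an ordinary double point, and takes the monodromy (a Dehn twist) of the resulting nodal family; Picard--Lefschetz gives the reflection $r_S$, and the family of K{\"a}hler structures supplies the deformation of symplectic forms automatically. Your route via the quadratic Cremona involution is equally classical and gives the same homology action; its cost is that $\widetilde{\mathrm{cr}}$ does not preserve the K{\"a}hler class, so you need the extra step at the end. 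That step can in fact be made more elementary than your appeal to Li--Liu: since $\widetilde{\mathrm{cr}}$ is a biholomorphism of $X_1$ onto $X_2$, the form $\widetilde{\mathrm{cr}}^*\omega_2$ is K{\"a}hler on $X_1$, so both $[\omega_1]$ and $r_S[\omega_1]$ lie in the K{\"a}hler cone of the fixed complex surface $X_1$, and the straight-line path $t\omega_1 + (1-t)\widetilde{\mathrm{cr}}^*\omega_2$ consists of K{\"a}hler forms. The paper's approach buys you a symplectomorphism essentially for free (the monodromy of a K{\"a}hler family); your approach stays entirely within birational geometry of $\bC P^2$ and avoids any mention of singular fibres, at the price of the short convexity argument.
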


\begin{proof}
For (i), one deforms the symplectic form so that $\cornerbar{E}$ is a blowup of $\bC P^2$ whose exceptional divisors have very small area. Then, a symplectic automorphism of $\bC P^2$ which  permutes the blowup points, and permutes local Darboux charts near those points, can be lifted to an automorphism of $\cornerbar{E}$. For (ii), one deforms the blowup points so that the last $3$ are collinear. Then, the blowup contains a $(-2)$-curve in class $S$, which one can contract to an ordinary double point singularity. The diffeomorphism we are looking for is the associated monodromy map (a Dehn twist).
\end{proof} 

\begin{lemma} \label{th:monodromy-applied}
Suppose that our Lefschetz fibration arises from a Lefschetz pencil on a del Pezzo surface which is not $\bC P^2$ blown up at $1$ or $2$ points. Then $\cornerbar{b} = 1$ is a solution of \eqref{eq:fundamental-equation}.
\end{lemma}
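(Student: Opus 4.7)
The plan is to invoke Lemma~\ref{th:invariant-subspace}: it suffices to exhibit a subgroup of the diffeomorphisms described there whose common invariants in $H^2(\cornerbar{E})$ form the $2$-dimensional span of $[\delta E|]$ and $[\bar{M}]$. I take the subgroup generated by those diffeomorphisms from Lemma~\ref{th:monodromy} which additionally fix $[\delta E|]$: the permutations that preserve the partition of the exceptional classes into those appearing as summands of $[\delta E|]$ and those not, together with reflections along roots $T = L - A_i - A_j - A_k$ (conjugates of $S$ by permutations) satisfying $T \cdot [\delta E|] = 0$.

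For the $\bC P^2$-blowup case, write $\sigma_1 := A_0 + \cdots + A_{d-1}$ and $\sigma_2 := A_d + \cdots + A_8$, so that $[\delta E|] = \sigma_1$. The subgroup of permutations preserving $[\delta E|]$ is $S_d \times S_{9-d}$, under which the invariants in $H^2$ are spanned by $L, \sigma_1, \sigma_2$. When $\roundbar{E} = \bC P^2$ itself ($d = 9$), $\sigma_2 = 0$ and this space is already two-dimensional, spanned by $\sigma_1 = [\delta E|]$ and $3L - \sigma_1 = [\bar{M}]$. Otherwise, the hypothesis forces at least $3$ blowup points, hence $d \leq 6$, and there exists a triple $\{i, j, k\} \subset \{d, \ldots, 8\}$; for such a triple $T \cdot [\delta E|] = 0$. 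A short calculation gives
\begin{equation*}
L \cdot T = 1, \quad \sigma_1 \cdot T = 0, \quad \sigma_2 \cdot T = 3,
\end{equation*}
so a permutation-invariant $aL + b\sigma_1 + c\sigma_2$ is reflection-invariant iff $a + 3c = 0$. This cuts the fixed subspace down to dimension two, and both $[\delta E|] = \sigma_1$ and $[\bar{M}] = 3L - \sigma_1 - \sigma_2$ satisfy the constraint.

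For the $\bC P^1 \times \bC P^1$ case, write $\sigma_0 := A_0 + \cdots + A_6$ and $\tau := A_7 + A_8$, so that $[\delta E|] = L + \sigma_0 - \tau$. Permutations preserving $[\delta E|]$ form $S_7 \times S_2$, with $H^2$-invariants spanned by $L, \sigma_0, \tau$. The reflections along $T_m = L - A_m - A_7 - A_8$, for $m \in \{0, \ldots, 6\}$, all satisfy $T_m \cdot [\delta E|] = 0$; the analogous calculation yields $L \cdot T_m = 1$, $\sigma_0 \cdot T_m = 1$, $\tau \cdot T_m = 2$, so invariance reduces to $a + b + 2c = 0$. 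Again this cuts the fixed subspace down to dimension two, and both $[\delta E|]$ and $[\bar{M}] = 3L - \sigma_0 - \tau$ satisfy the constraint.

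The computations are routine intersection-number bookkeeping; no serious obstacle arises. The critical qualitative point --- precisely the reason the hypothesis excludes $\bC P^2$ blown up at $1$ or $2$ points --- is that for $d \in \{7, 8\}$ no triple $\{i, j, k\}$ lies entirely in $\{d, \ldots, 8\}$, so no root of the form $L - A_i - A_j - A_k$ is orthogonal to $[\delta E|]$, and the permutation-invariant subspace stubbornly remains three-dimensional.
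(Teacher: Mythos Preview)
Your proof is correct and follows essentially the same approach as the paper's: both invoke Lemma~\ref{th:invariant-subspace}, use the permutation subgroup $S_d \times S_{9-d}$ (resp.\ $S_7 \times S_2$) to reduce the invariant subspace of $H^2(\cornerbar{E})$ to three dimensions, and then use a single reflection along a root of the form $L - A_i - A_j - A_k$ orthogonal to $[\delta E|]$ to cut it down to two. The paper works with the fixed root $S = L - A_6 - A_7 - A_8$ throughout and simply asserts that the reflection does not fix the third generator, whereas you phrase it in terms of an arbitrary triple in $\{d,\ldots,8\}$ and carry out the intersection-number computation explicitly; these are cosmetic differences only.
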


\begin{proof}
This will be a direct application of Lemma \ref{th:invariant-subspace}. The simplest case is where $\roundbar{E}$ is ${\bC P^2}$. Then, any diffeomorphism from Lemma \ref{th:monodromy}(i) preserves $[\delta E|]$; the subspace of classes that are invariant under such diffeomorphisms is clearly spanned by $[\delta E|]$ and $[\bar{M}]$.

Next, let $\roundbar{E}$ be $\bC P^2$ blown up at $9-d$ points, where $1 \leq d \leq 6$. Since $[\delta E|]$ should be preserved, when considering Lemma \ref{th:monodromy}(i), only permutations which preserve the subsets $\{A_0,\dots,A_{d-1}\}$ and $\{A_d,\dots,A_8\}$ can be used. With respect to such permutations, the invariant subspace of homology is three-dimensional: it is spanned by $[\delta E|]$, $[\bar{M}]$ and $A_d + \cdots + A_8$. However, since the class $S$ from \eqref{eq:reflection} satisfies $S \cdot [\delta E|] = 0$, the diffeomorphism from Lemma \ref{th:monodromy}(ii) also preserves $[\delta E|]$,  but not $A_d + \cdots + A_8$. Hence, after using that diffeomorphism as well, the subspace of invariant cohomology becomes two-dimensional.

For $\roundbar{E} \iso \bC P^1 \times \bC P^1$, one can use Lemma \ref{th:monodromy}(i) to narrow down the invariant part of homology to the subspace spanned by $[\delta E|]$, $[\bar{M}]$, and $A_7+A_8$. The last step is then as before.
\end{proof}

Theorem \ref{th:main} shows that in the situation of Lemma \ref{th:monodromy-applied}, the deformation $\scrA_q$ (with $\bar{b} = 1$) is trivial. This recovers the observations made in Sections \ref{subsec:cubic-pencil} and \ref{subsec:cp2-pencil}.

\subsection{Enumerative geometry}
We would like to complement the previous considerations by explicit computations of $z^{(1)}$, still assuming that $\cornerbar{b} = 1$. For that, we make use of known results on the genus zero enumerative geometry of the rational elliptic surface \cite{goettsche-pandharipande98, bryan-leung97}.

\begin{lemma}[\protect{Bryan-Leung \cite[Theorem 6.2]{bryan-leung97}}]
Take any class
\begin{equation} \label{eq:a-condition}
A \in H_2(\cornerbar{E}), \quad \bar{M} \cdot A = 1, \quad A \cdot A = 2k-1
\end{equation}
(note that if $\bar{M} \cdot A = 1$, then $A \cdot A$ is necessarily odd.) The count of rational curves in this class is $z_A = z_k \cdot A$, where the numbers $z_k$ are determined by the generating series
\begin{equation}
\sum_k z_k q^k = \prod_m \frac{1}{(1-q^m)^{12}} = \frac{q^{1/2}}{\Delta(q)^{1/2}}
= 1 + 12 q + O(q^2).
%
\end{equation}
\end{lemma}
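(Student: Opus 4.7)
The plan is to use the elliptic fibration structure on $\cornerbar{E}$: by symplectic deformation invariance of Gromov--Witten invariants, we may assume $\cornerbar{\pi}\colon \cornerbar{E} \to \bC P^1$ is relatively minimal with exactly $12$ singular fibres, all of Kodaira type $I_1$ (nodal). By the divisor axiom \eqref{eq:divisor-axiom} applied to $d=1$, it suffices to prove the numerical statement $\langle\,\rangle_A = z_k$ whenever $\bar{M}\cdot A = 1$ and $A^2 = 2k-1$; the Poincar\'e-dual identity $z_A = z_k \cdot A$ then follows from $\langle x\rangle_A = (x\cdot A)\,z_k$ for $|x|=2$.

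The first step is to describe genus $0$ stable maps $f\colon C \to \cornerbar{E}$ in class $A$. Since $(\cornerbar{\pi}\circ f)_*[C] = (\bar{M}\cdot A)[\mathrm{pt}] = [\mathrm{pt}]$, the tree $C$ has a unique irreducible component $C_0$ mapping isomorphically onto the base; its image is a smooth rational section $s$, with $s\cdot s = -1$ by adjunction (using $K_{\cornersubbar{E}} = -\bar{M}$). All remaining components of $C$ map into fibres, and being rational they cannot cover a smooth (elliptic) fibre, so each must factor through the normalisation of one of the $12$ nodal fibres. Writing $A = s + kF$ in homology (with $F = \bar{M}$), the self-intersection constraint $A^2 = 2k-1$ pins down the total fibre degree to $k$.

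The second step is the enumerative count. With $s$ fixed, the stable maps contributing to $\langle\,\rangle_A$ are parametrised by a $12$-tuple of partitions $(\lambda_1,\dots,\lambda_{12})$ with $\sum|\lambda_i| = k$, where $\lambda_i$ records the multiplicities of cover components sprouting from $s$ at the $i$-th nodal fibre. The virtual local contribution of a degree-$m$ cover of an isolated nodal rational curve in a surface is a standard Gopakumar--Vafa-type calculation; summed over $m$, it produces the multiplicative factor $\prod_{j\ge 1}(1-q^j)^{-1}$ per nodal fibre. The twelvefold product over the $12$ nodal fibres yields $\prod_{j\ge 1}(1-q^j)^{-12}$, matching the stated generating series.

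The main obstacle is the rigorous virtual computation at a single nodal fibre: the moduli space of $m$-fold covers of a nodal $\bC P^1$ has excess dimension, and the specific factor $\prod_j(1-q^j)^{-1}$ (rather than any other series) must be extracted via obstruction-bundle analysis of the normal sheaf of the nodal fibre in $\cornerbar{E}$. This is the combinatorial and analytic heart of the Bryan--Leung argument, and is where the exponent $12$ (the number of nodal fibres, equivalently the topological Euler number of the rational elliptic surface) enters the final formula. A secondary verification is that the $(-1)$-section $s$ contributes exactly once to the count once all distinct section classes compatible with $A$ are identified; this follows from the transitivity of the Mordell--Weil action on $(-1)$-sections together with the deformation invariance already invoked, so that $\langle\,\rangle_A$ depends only on the numerical invariants $\bar{M}\cdot A$ and $A^2$.
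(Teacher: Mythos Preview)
The paper does not give its own proof of this lemma: it is stated with attribution to Bryan--Leung \cite[Theorem~6.2]{bryan-leung97} and then simply used. So there is nothing in the paper to compare your argument against; the relevant question is whether your outline is a faithful sketch of the cited result.

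It is, in broad strokes. Your three-step structure (reduce to the numerical invariant via the divisor axiom; identify stable maps as a section plus multiple covers of nodal fibres; compute the local contribution at each of the $12$ nodal fibres) is exactly the Bryan--Leung strategy, and you are right that the obstruction-bundle analysis producing $\prod_j (1-q^j)^{-1}$ per node is the technical core you would need to import from that reference. Two small points of care: first, the passage ``writing $A = s + kF$'' presupposes that the class $s = A - k\bar{M}$ (which has $s^2=-1$, $s\cdot\bar{M}=1$) is actually represented by a holomorphic section. For a generic rational elliptic surface the Mordell--Weil lattice is the full $E_8$, and every such numerical section class is realised by a unique section; but this deserves a sentence rather than being folded into ``deformation invariance''. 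Second, your final paragraph conflates two separate issues --- uniqueness of the section in its class, and invariance of $\langle\,\rangle_A$ under the Mordell--Weil action --- and the phrase ``all distinct section classes compatible with $A$'' is misleading, since $s = A - k\bar{M}$ is determined by $A$. What you really need is that the section is rigid (being a $(-1)$-curve), so its contribution to the count is $1$; the Mordell--Weil transitivity is then a consistency check rather than an input.
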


It remains to organize the way in which we sum over all homology classes, and for this we follow \cite{hosono-saito-stienstra97}. All $A$ as in \eqref{eq:a-condition} can be written as
\begin{align} 
& A = A_0 + X - \textstyle{\half} (X \cdot X) [\bar{M}] + k [\bar{M}],  \label{eq:a-and-a0}
\text{ where $X$ satisfies}
\\ & \label{eq:lattice} \bar{M} \cdot X = 0, \quad A_0 \cdot X = 0.
\end{align}
Therefore,
\begin{equation} \label{eq:little-s}
\begin{aligned}
z^{(1)} = & \; \sum_{X,k} q^{\delta E| \cdot A_0 + \delta E| \cdot X - \frac{d}{2} X \cdot X + dk} z_k\, (A_0 + X)  \\  
& +\sum_{X,k} -{\textstyle\half}(X \cdot X) q^{\delta E| \cdot A_0 + \delta E| \cdot X - \frac{d}{2} X \cdot X + dk} z_k \, [\bar{M}]  \\
& + \sum_{X,k} q^{\delta E| \cdot A_0 + \delta E| \cdot X - \frac{d}{2} X \cdot X + dk} \, k z_k\, [\bar{M}] 
\\
= & \; q^{\delta E| \cdot A_0} \frac{q^{d/2}}{\Delta(q^d)^{1/2}}\sum_X  q^{\delta E| \cdot X - \frac{d}{2} X \cdot X} (A_0 + X) \\
& + q^{\delta E| \cdot A_0} \frac{q^{d/2}}{\Delta(q^d)^{1/2}} \sum_X -{\textstyle\half}(X \cdot X) q^{\delta E| \cdot X - \frac{d}{2} X \cdot X}\, [\bar{M}] \\
& + q^{\delta E| \cdot A_0} \frac{q}{d} \partial_q \Big( \frac{q^{d/2}}{\Delta(q^{d/2})} \Big)  \sum_X q^{\delta E| \cdot X - \frac{d}{2} X \cdot X}\, [\bar{M}].
\end{aligned}
\end{equation}

Suppose that we are in the situation of Lemma \ref{th:monodromy-applied}, so that \eqref{eq:fundamental-equation} holds. By taking the product of \eqref{eq:little-s} with $[\bar{M}]$, one sees that the function $\psi$ appearing in that equation satisfies
\begin{equation} \label{eq:determine-psi}
\frac{1}{\psi} = \frac{q}{d} \, q^{\delta E|\cdot A_0} \frac{q^{d/2}}{\Delta(q^{d/2})} \sum_X q^{\delta E| \cdot X - \frac{d}{2} X \cdot X}.
\end{equation}
The subspace of classes $X$ in \eqref{eq:lattice}, with the intersection form, is isomorphic to the (negative definite even) $E_8$ lattice, see e.g. \cite{donagi-grassi-witten96}. The sum in \eqref{eq:determine-psi} is a theta-function (usually an inhomogeneous one) for that lattice. Note also that $\psi$ determines $\eta$, by Lemma \ref{th:4d}.

\begin{example}
The evaluation of \eqref{eq:determine-psi} is particularly simple for $[\delta E|] = A_0$, which corresponds to the anticanonical pencil on the largest del Pezzo surface. One then has $\delta E| \cdot X = 0$ by \eqref{eq:lattice}, and therefore the sum in \eqref{eq:determine-psi} is the ordinary theta-function $\Theta_{E_8}(q)$ of the $E_8$ lattice. One concludes (compare \cite[Proposition 6.1]{hosono-saito-stienstra97}) that
\begin{equation}
\psi = \frac{q^{1/2} \Delta(q)^{1/2}}{\Theta_{E_8}(q)}.
\end{equation}
\end{example}

\begin{example} \label{th:cp2-psi-eta}
Take $\roundbar{E} = \bC P^2$. Then, the sum in \eqref{eq:determine-psi} is the inhomogeneous theta-function $\Theta_{E_8+s}(q^9)$, where $s$ is a ``shallow hole'' \cite[p.~121]{conway-sloane}. By \cite{hosono-saito-stienstra97,zagier98}, one has
\begin{equation}
\Theta_{E_8+s}(q) = 9 \, \frac{\Delta(q)^{1/2}}{q^{1/2}} \, \frac{q^{1/18}}{\Delta(q^{1/3})^{1/6}}.
\end{equation}
Applying this to \eqref{eq:determine-psi}, one gets
\begin{equation} \label{eq:psi-example}
\psi = 9 \frac{\Delta(q^9)^{1/2}}{q^{9/2}} \Theta_{E_8+s}(q^9)^{-1} = 
\frac{\Delta(q^3)^{1/6}}{q^{1/2}}.
\end{equation}
\end{example}

\subsection{The anticanonical pencil on $F_1$}
The Hirzebruch surface $\roundbar{E} = F_1$ is one of the examples excluded from Lemma \ref{th:monodromy-applied}. We already encountered it in Section \ref{subsec:f1}: the nontriviality result proved there, together with Theorem \ref{th:main}, implies that $\cornerbar{b} = 1$ can't be a solution of \eqref{eq:fundamental-equation}; in fact, there can't even be a solution of the form $\cornerbar{b} = 1 + O(q^2)$. To close the circle, let's re-derive that result by an explicit computation.

There are $8$ classes $X$ in \eqref{eq:lattice} for which the exponent in \eqref{eq:little-s} takes on its minimal value $-1$ (these are the classes $A_1,\dots,A_8$ of the connected components of $\delta E|$), and exactly one class where the exponent is zero, namely $A_0$. Hence,
\begin{equation} \label{eq:sq-quadratic}
z^{(1)} \equiv q^{-1}[\delta E|] + A_0 + O(q) \;\;
\text{mod} \; [\bar{M}].
\end{equation}
This shows that, as predicted by the general theory, $z^{(1)}$ does not lie in the subspace generated by $[\delta E|]$ and $[\bar{M}]$.

We know from Lemma \ref{th:solutions} that \eqref{eq:fundamental-solution} has a solution with nontrivial $\cornerbar{b}$. In view of \eqref{eq:sq-quadratic}, it is natural to try the ansatz
\begin{equation}
\cornerbar{b} = \beta(q)^{A_0}, \;\; \beta \in 1 + q\bC[[q]]. 
\end{equation}
By an explicit computation extending that in \eqref{eq:little-s}, one gets
\begin{equation}
\begin{aligned}
z^{(1)} & = 
\frac{(q^8 \beta(q) )^{\half}}{\Delta(q^8 \beta(q))^{\half}}
\sum_X (A_0 + X) \beta(q)^{- \half (X \cdot X) - 1} q^{\delta E| \cdot X - 4X \cdot X} \\
& = q^{-1} [\delta E|] + A_0 \beta(q)^{-1} +\\ 
& \qquad \qquad + q \beta(q) (-{\textstyle\frac{16}{3}} A_0 + {\textstyle\frac{5}{3}} [\delta E|]) + q^2 (\textstyle\frac{28}{3} A_0 + \textstyle\frac{7}{3} [\delta E|] )+ O(q^3) \;\; \mathrm{mod} \; [\bar{M}]. 
\end{aligned}
\end{equation}
Solving \eqref{eq:fundamental-equation} to the first few orders in $q$ yields
\begin{equation} \label{eq:beta-function}
\beta(q) = 1 + q - \textstyle\frac{8}{3} q^2 - q^3 + O(q^4).
\end{equation}

\begin{remark}
In principle, one should be able to check geometrically, using Figure \ref{fig:12}, that \eqref{eq:beta-function} leads to a deformation $\scrA_{q,\bar{b}}$ which is trivial to low orders. As in \eqref{eq:beta-factors}, this would mean counting holomorphic polygons that pass over the ``filled in'' puncture $m$ times with an additional factor $\beta(q)^m$. We won't try to do that, but one can at least note that \eqref{eq:nontrivial-product} would become $\beta(q) - q + O(q^2) = 1 + O(q^2)$. Hence, our original argument for non-triviality of the deformation $\scrA_q$ breaks down if one turns on the appropriate bulk term (as it must).
\end{remark}

\section{The elliptic pencil on $\bC P^2$, revisited\label{sec:example-revisited}}

\subsection{The differential equation\label{subsec:example-connection}}
We continue our discussion of Example \ref{th:cp2-psi-eta}, with the aim of testing the ideas from Section \ref{subsec:conjecture}. As before, we set $\cornerbar{b} = 1$ throughout, and will omit all bulk terms from the notation. 

Fix $0 \leq i < j \leq 8$. Lemma \ref{th:monodromy} says that we can find a diffeomorphism of $\cornerbar{E}$ which preserves the deformation class of the symplectic form, reverses $S_{ij} = A_i - A_j$, and leaves its orthogonal complement (under the intersection form) unchanged. Therefore, we necessarily have
\begin{equation}
[\bar{M}] \ast S_{ij} = \lambda S_{ij}
\end{equation}
for some $\lambda \in \bC((q))$. By the same kind of argument, $\lambda$ is the same for all $(i,j)$. Explicitly,
\begin{equation} \label{eq:lambda-function}
\lambda = {\textstyle -\half} ([\bar{M}] \ast S_{ij}) \cdot S_{ij} = {\textstyle -\half} \sum_{\bar{M} \cdot A = 1} q^{\delta E| \cdot A} z_k (A \cdot S_{ij})^2,
\end{equation}
where $k = \half(A \cdot A + 1)$, as in \eqref{eq:a-and-a0}. 
For classes $A$ with $\bar{M} \cdot A = 1$, 
\begin{equation}
\sum_{i<j} (A \cdot S_{ij})^2 = -9 A \cdot A + 2 \delta E| \cdot A + 1 = -18k + 2\, \delta E| \cdot A + 10.
\end{equation}
Hence, by summing up \eqref{eq:lambda-function} over all $i<j$, one gets
\begin{equation}
\lambda = {\textstyle \frac14} \sum_A q^{\delta E| \cdot A} z_k k  -
{\textstyle \frac{1}{36}} \sum_A q^{\delta E| \cdot A} z_k (\delta E| \cdot A) -
{\textstyle \frac{5}{36}} \sum_A q^{\delta E| \cdot A} z_k.
\end{equation}
All three summands can be identified as before, which yields
\begin{equation}
\begin{aligned}
\lambda & = {\textstyle \frac{1}{36}} \partial_q \Big( \frac{q^{9/2}}{\Delta(q^9)^{1/2}} \Big) \Theta_{E_8+s}(q^9) - \frac{\eta}{4\psi} - \frac{1}{q\psi} \\
 & = {\textstyle \frac{1}{4}} \partial_q \Big( \frac{q^{9/2}}{\Delta(q^9)^{1/2}} \Big) \frac{\Delta(q^9)^{1/2}}{q^{9/2}} \frac{q^{1/2}}{\Delta(q^3)^{1/6}} - {\textstyle\frac{1}{4}} \partial_q \Big( \frac{q^{1/2}}{\Delta(q^3)^{1/6}}\Big) - \frac{1}{q^{1/2} \Delta(q^3)^{1/6}}.
\end{aligned}
\end{equation}

We now apply an argument of the same kind as Lemma \ref{th:eigenvalues}, but in reverse direction, using our knowledge of the eigenvalue $\lambda$ to determine the enumerative invariant $z_q^{(2)}$.
Namely, because of the associativity of the quantum product, the following two expressions must be equal:
\begin{align}
& [\bar{M}] \ast ([\bar{M}] \ast S_{ij}) = \lambda^2 S_{ij},  \\
& ([\bar{M}] \ast [\bar{M}]) \ast S_{ij} = \Big(\frac{1}{q\psi} [\delta E|] + \frac{\eta}{\psi} [\bar{M}] + 4z_q^{(2)}\Big) \ast S_{ij} = \Big( \partial_q \Big(\frac{\lambda}{\psi}\Big)+ 4 z_q^{(2)} \Big) S_{ij} \label{eq:ss}
\end{align}
In \eqref{eq:ss}, we have used the divisor axiom as well as Lemma \ref{th:4d}. This leads to the relation
\begin{equation} \label{eq:point-recursion-relation}
z_q^{(2)} = {\textstyle \frac{1}{4}} \big( \lambda^2 - \partial_q (\lambda/\psi) \big).
\end{equation}

At this point, we have determined $\psi$, $\eta$ and $z^{(2)}_q$, hence know the coefficients in the matrix $\Gamma$ from \eqref{eq:fundamental-solution}. One could now try to solve for $\Theta$ in closed form. We settle for a more modest goal, namely the first few Taylor coefficients, which can be computed by solving \eqref{eq:fundamental-solution} order by order in $q$:
\begin{equation} \label{eq:compute-fundamental-solution}
\begin{aligned}
&
\Theta_{11} =
1 + 6 q^3 + 6 q^9 + 6 q^{12} + 12 q^{21} + 6 q^{27} + O(q^{32}), \\
&
\Theta_{21} = 
-18 q^2 - 72 q^5 - 306 q^8 - 1008 q^{11} - 2934 q^{14} - 7704 q^{17} - 19134 q^{20} \\ 
& \qquad \qquad - 44496 q^{23} - 99270 q^{26} - 212256 q^{29} + O(q^{32}), \\
&
\Theta_{12} =
-q - q^4 - 2 q^7 - 2 q^{13} - q^{16} - 2 q^{19} - q^{25} - 2 q^{28} - 2 q^{31} + O(q^{32}), \\
&
\Theta_{22} =
1 + 8 q^3 + 44 q^6 + 152 q^9 + 487 q^{12} + 1352 q^{15} + 3518 q^{18} + 8480 q^{21} \\
& \qquad \qquad + 19503 q^{24} + 42768 q^{27} + 90530 q^{30} + O(q^{32}).
\end{aligned}
\end{equation}

\begin{remark} \label{th:oberdieck}
With the help of the {\em Online Encyclopedia of Integer Sequences}, the formulae above suggest the following candidates for closed form solutions: 
\begin{equation}
\begin{aligned}
& \Theta_{11} = \Theta_{\mathit{hex}}(q^3), \\ 
& \Theta_{12} = -{\textstyle\frac{1}{3}} \Theta_{\mathit{hex}+d}(q^3) = -\frac{\Delta(q^9)^{1/8}}{3\Delta(q^3)^{1/24}}.
\end{aligned}
\end{equation}
where $\Theta_{\mathit{hex}}$ is the theta-function of the hexagonal plane lattice, and $\Theta_{\mathit{hex}+d}$ its inhomogeneous counterpart with respect to a ``deep hole'' \cite[p.~111]{conway-sloane}.
\end{remark}

\subsection{The mirror map\label{subsec:mirror-map}}
As mentioned in Section \ref{subsec:cp2-pencil}, the mirror of the cubic pencil on $\bC P^2$ is the extremal rational elliptic surface $X = X_{9111}$. In fact, for a suitable choice of coordinates on the base, its elliptic fibration
\begin{equation}
p: X \longrightarrow \bC P^1 = \bC \cup \{\infty\}
\end{equation}
is a fibrewise compactification of \eqref{eq:toric-mirror-cp2}. This description makes it easy to determine the $j$-invariant of the fibre $X_z = p^{-1}(z)$, which is
\begin{equation}
j(z) = z^3\frac{(z^3-24)^3}{z^3 - 27}.
\end{equation}
Following the prediction in Remark \ref{th:mirror-fibre}, let's substitute
\begin{equation} \label{eq:my-mirror-map}
\begin{aligned}
z & = -\frac{\Theta_{11}}{\Theta_{12}} = q^{-1} + 5 q^2 - 7 q^5 + 3 q^8 + 15 q^{11} - 32 q^{14} + 9 q^{17} \\ & \qquad\qquad\qquad\qquad\qquad   + 58 q^{20} - 96 q^{23} + 22 q^{26} + 149 q^{29} + O(q^{30}),
\end{aligned}
\end{equation}
where $\Theta_{11}$, $\Theta_{12}$ are the fundamental solutions from \eqref{eq:compute-fundamental-solution}. Then, to the available precision,
\begin{equation} \label{eq:j9}
j(z) = q^{-9} + 744 + 196884 q^9 + 21493760 q^{18} + O(q^{23})
\end{equation}
agrees with the classical $j$-function (with parameter $q^9$). This is precisely what one expects based on Conjecture \ref{th:get-fibre}: in informal language as in \eqref{eq:log-area}, the fibre $\bar{M}$ is a two-torus with ``area'' $-\log(q^9)$, because there are $9$ punctures; the mirror of which is precisely the elliptic curve with $j$-invariant \eqref{eq:j9}.

\begin{remark}
To relate this to previous computations in the literature, consider the family of cubic curves in $\bC P^2$ given by
\begin{equation} \label{eq:hesse}
x_0^3 + x_1^3 + x_2^2 - \tilde{z}^{-1/3} x_0x_1x_2 = 0,
\end{equation}
for which the mirror map (see e.g.\ \cite{roan95}) is
\begin{equation} \label{eq:classical-mirror-map}
\tilde{z} = \tilde{q} - 15 \tilde{q}^2 + 171 \tilde{q}^3 - 1679 \tilde{q}^4 + 15054 \tilde{q}^5 - 126981 \tilde{q}^6 + \cdots 
\end{equation}
The curves \eqref{eq:hesse} are $\bZ/3$-covers of the (compactified) fibres in \eqref{eq:toric-mirror-cp2}, where $\tilde{z} = z^{-3}$. Correspondingly, their mirror is $\bZ/3$-quotient of the cubic pencil. Indeed, if one also sets $\tilde{q} = q^{1/3}$, \eqref{eq:my-mirror-map} turns into \eqref{eq:classical-mirror-map}.
\end{remark}

\appendix
\section{Some Maurer-Cartan theory\label{sec:algebra}}

\appsubsection{One-parameter formal deformations}
We recall some elements of abstract Maurer-Cartan theory (the original reference is \cite{goldman-millson88}; an exposition close to our purpose is \cite{lunts10}). Let $\frakg$ be a dg Lie algebra over $\bC$. One considers solutions $\alpha_q \in q \frakg^1[[q]]$ (formal power series with coefficients in $\frakg^1$ and vanishing constant term) of the Maurer-Cartan equation
\begin{equation} \label{eq:maurer-cartan}
d\alpha_q + \textstyle\half [\alpha_q,\alpha_q] = 0.
\end{equation}
Take the group associated to the (pronilpotent) Lie algebra $q \frakg^0[[q]]$. We denote its elements by $\exp(\gamma_q)$, for $\gamma_q \in q \frakg^0[[q]]$. This group acts on the set of Maurer-Cartan elements by gauge transformations:
\begin{equation} \label{eq:gauge}
\Gamma_{\exp(\gamma_q)}(\alpha_q) = \alpha_q + ([\gamma_q,\alpha_q] - d\gamma_q) + \text{(terms of order $\geq 2$ in $\gamma$).}
\end{equation}
We say that $\alpha_q$ is trivial if it is gauge equivalent to zero. If we write $\alpha_q = q\alpha_1 + q^2\alpha_2 + \cdots$, then it follows from \eqref{eq:maurer-cartan} that the first order piece satisfies $d\alpha_1 = 0$, and from \eqref{eq:gauge} that the class
\begin{equation}
[\alpha_1] \in H^1(\frakg)
\end{equation}
is gauge invariant. There are standard rigidity and unobstructedness results:

\begin{lemma}
Suppose that $H^1(\frakg) = 0$. Then any Maurer-Cartan element is trivial.
\end{lemma}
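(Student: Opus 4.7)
The plan is to trivialize $\alpha_q$ order by order in $q$, exploiting the assumption $H^1(\frakg)=0$ at each step, and then to package the resulting sequence of gauge transformations into a single one using $q$-adic convergence in the pronilpotent group.

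First, I would establish the inductive base: if $\alpha_q$ vanishes modulo $q^k$, so $\alpha_q = q^k \alpha_k + O(q^{k+1})$ with $\alpha_k \in \frakg^1$, then extracting the coefficient of $q^k$ in the Maurer--Cartan equation \eqref{eq:maurer-cartan} gives $d\alpha_k = 0$, since the bracket $[\alpha_q,\alpha_q]$ contributes only in orders $\geq 2k \geq k+1$. By the hypothesis $H^1(\frakg) = 0$, we can write $\alpha_k = d\beta_k$ for some $\beta_k \in \frakg^0$. Applying the gauge transformation $\exp(\gamma_q)$ with $\gamma_q = -q^k \beta_k$, formula \eqref{eq:gauge} yields
\begin{equation*}
\Gamma_{\exp(-q^k\beta_k)}(\alpha_q) = \alpha_q - q^k d\beta_k + (\text{terms of order $\geq k+1$ in $q$}) = O(q^{k+1}).
\end{equation*}
Thus, starting from $k=1$ (where the Maurer--Cartan equation forces $d\alpha_1 = 0$ directly) and iterating, we obtain a sequence $\exp(\gamma^{(1)}_q), \exp(\gamma^{(2)}_q), \dots$ of gauge transformations with $\gamma^{(k)}_q \in q^k\frakg^0[[q]]$, such that the composition of the first $N$ of them sends $\alpha_q$ to something lying in $q^{N+1}\frakg^1[[q]]$.

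Finally, I would assemble these into a single gauge equivalence. Because each $\gamma^{(k)}_q$ lies in $q^k \frakg^0[[q]]$, the infinite product $\exp(\gamma^{(1)}_q) \exp(\gamma^{(2)}_q) \cdots$ converges $q$-adically in the pronilpotent group associated to $q\frakg^0[[q]]$ (the Baker--Campbell--Hausdorff formula produces only finitely many contributions in each $q$-order). The resulting element $\exp(\gamma_q)$ sends $\alpha_q$ to an element of $\bigcap_N q^N \frakg^1[[q]] = 0$, which is precisely the statement that $\alpha_q$ is trivial.

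The only point requiring any care is the $q$-adic convergence of the infinite product; this is automatic from the valuation filtration on the pronilpotent group, and is the standard reason such inductive arguments work in formal deformation theory.
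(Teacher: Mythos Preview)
The paper states this lemma as one of the ``standard rigidity and unobstructedness results'' and does not supply a proof, so there is nothing to compare against directly. Your order-by-order argument is the standard one and is correct in outline.

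There is a sign slip: with $\gamma_q = -q^k\beta_k$, formula \eqref{eq:gauge} gives $-d\gamma_q = +q^k d\beta_k$, so the $q^k$-term doubles rather than cancels. You want $\gamma_q = +q^k\beta_k$ (or equivalently define $\beta_k$ by $\alpha_k = -d\beta_k$). With that correction the argument goes through as written.
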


\begin{lemma}
Suppose that $H^2(\frakg) = 0$. Then, any class in $H^1(\frakg)$ can be realized as the first order piece of a Maurer-Cartan element.
\end{lemma}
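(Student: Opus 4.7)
The plan is to construct the Maurer-Cartan element order-by-order in $q$, starting from a chosen closed representative $\alpha_1 \in \frakg^1$ of the prescribed class in $H^1(\frakg)$. So I would write $\alpha_q = q\alpha_1 + q^2 \alpha_2 + q^3 \alpha_3 + \cdots$ and solve \eqref{eq:maurer-cartan} inductively: expanding that equation in powers of $q$, the coefficient of $q^{n+1}$ for $n \geq 1$ reads
\begin{equation*}
d\alpha_{n+1} = -\tfrac{1}{2} \sum_{\substack{i+j=n+1 \\ i,j \geq 1}} [\alpha_i,\alpha_j] =: \omega_{n+1},
\end{equation*}
so given $\alpha_1,\dots,\alpha_n$ already constructed, one needs $\omega_{n+1}$ to be exact in order to find $\alpha_{n+1}$.

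The key step — and the only place where there is real content — is verifying that $\omega_{n+1}$ is automatically closed, so that the hypothesis $H^2(\frakg) = 0$ can do its job. For this I would prove the Bianchi-type identity $dF(\alpha) = [F(\alpha),\alpha]$ for $F(\alpha) = d\alpha + \tfrac{1}{2}[\alpha,\alpha]$ with $\alpha$ of odd degree, which follows from the graded Leibniz rule and the vanishing of $[[\alpha,\alpha],\alpha]$ by graded Jacobi. Applying this to the truncated element $\alpha_q^{(n)} = q\alpha_1 + \cdots + q^n \alpha_n$, whose $F$-value vanishes modulo $q^{n+1}$ by inductive hypothesis, and reading off the coefficient of $q^{n+1}$ on both sides, one gets $d\omega_{n+1} = 0$ (the right-hand side contributes only in degree $q^{n+2}$ or higher because $F(\alpha_q^{(n)})$ starts in degree $q^{n+1}$ and $\alpha_q^{(n)}$ starts in degree $q$).

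With closedness of $\omega_{n+1}$ in hand, $H^2(\frakg) = 0$ furnishes $\alpha_{n+1} \in \frakg^1$ with $d\alpha_{n+1} = \omega_{n+1}$, completing the inductive step and producing a formal power series solution $\alpha_q$ whose first order piece is the given $\alpha_1$. I expect no serious obstacle beyond the sign/Jacobi bookkeeping in the Bianchi identity; everything else is formal. (One could alternatively package the whole argument as a standard application of the pronilpotent obstruction theory for the dg Lie algebra $\frakg \otimes q\bC[[q]]$, but the bare-hands induction above seems cleanest.)
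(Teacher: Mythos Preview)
Your argument is correct and is exactly the standard order-by-order obstruction argument for this unobstructedness result. The paper itself does not supply a proof: it states this lemma (together with the companion rigidity lemma) as a ``standard rigidity and unobstructedness result'', citing the general Maurer-Cartan literature. So there is nothing to compare against beyond noting that your proof is the expected one.

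One small presentational remark: your Bianchi identity $dF(\alpha) = [F(\alpha),\alpha]$ is correct for $\alpha$ of degree $1$, and your use of it is fine, but you might want to be explicit that the truncation $\alpha_q^{(n)}$ lives in $\frakg^1 \otimes q\bC[[q]]$ (so the degree-$1$ hypothesis is satisfied over the extended dg Lie algebra), and that $F(\alpha_q^{(n)})$ has $q^{n+1}$-coefficient precisely $-\omega_{n+1}$ (not just ``vanishes mod $q^{n+1}$''), which is what makes the reading-off step work. None of this affects correctness.
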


Any Maurer-Cartan element defines a deformed differential on $\frakg[[q]]$, namely
\begin{equation} \label{eq:deformed-differential}
d_{\alpha_q} = d + [\alpha_q,\cdot].
\end{equation}
There is a distinguished cohomology class, the Kaledin class
\begin{equation} \label{eq:kaledin-class}
\kappa(\alpha_q) = [\partial_q \alpha_q] \in H^1(\frakg[[q]],d_{\alpha_q}).
\end{equation}
The adjoint action of $\exp(\gamma_q)$ relates $d_{\alpha_q}$ and $d_{\Gamma_{\exp(\gamma_q)}(\alpha_q)}$, hence induces isomorphisms
\begin{equation} \label{eq:conjugation-isomorphisms}
H^*(\frakg[[q]],d_{\alpha_q}) \iso H^*(\frakg[[q]], d_{\Gamma_{\exp(\gamma_q)}(\alpha_q)}).
\end{equation}
The Kaledin class is gauge invariant, which means that it is preserved under \eqref{eq:conjugation-isomorphisms} \cite[Proposition 7.3(a)]{lunts10}. In that sense, it is an invariant of the gauge equivalence class of $\alpha_q$.

\begin{lemma}[\protect{\cite[Proposition 7.3(b)]{lunts10}}] \label{th:general-kaledin}
A Maurer-Cartan element is trivial if and only if its Kaledin class vanishes.
\end{lemma}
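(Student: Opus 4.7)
The ``only if'' direction is immediate from material already in the excerpt: the Kaledin class is gauge-invariant, so $\alpha_q$ trivial forces $\kappa(\alpha_q)=\kappa(0)=[\partial_q 0]=0$. For the converse I would trivialize $\alpha_q$ order by order in the $q$-adic filtration, at each stage using vanishing of $\kappa$ (applied to the current representative) to exhibit the leading coefficient as a $d$-coboundary, which a further small gauge transformation then absorbs.

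The precise plan is an induction producing $\gamma_q^{(n)}\in q\frakg^0[[q]]$, starting from $\gamma_q^{(0)}=0$, with $\gamma_q^{(n+1)}\equiv\gamma_q^{(n)}\pmod{q^{n+1}}$ and such that $\tilde\alpha_q^{(n)}:=\Gamma_{\exp(\gamma_q^{(n)})}(\alpha_q)$ lies in $q^{n+1}\frakg^1[[q]]$. Writing $\tilde\alpha_q^{(n)}=q^{n+1}\beta_{n+1}+O(q^{n+2})$, the Maurer-Cartan equation at order $q^{n+1}$ forces $d\beta_{n+1}=0$, because the bracket term only contributes from order $q^{2n+2}$ onwards. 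Since $\tilde\alpha_q^{(n)}$ is gauge equivalent to $\alpha_q$, its Kaledin class also vanishes, giving $\eta_q\in\frakg^0[[q]]$ with $\partial_q\tilde\alpha_q^{(n)}=d_{\tilde\alpha_q^{(n)}}\eta_q$. Comparing coefficients of $q^n$ --- and using $d_{\tilde\alpha_q^{(n)}}\equiv d\pmod{q^{n+1}}$ --- yields $(n+1)\beta_{n+1}=d\eta_n$, where $\eta_n$ is the $q^n$-coefficient of $\eta_q$. Composing $\exp(\gamma_q^{(n)})$ with the small gauge transformation $\exp\bigl(q^{n+1}\eta_n/(n+1)\bigr)$ (sign fixed by \eqref{eq:gauge}) defines $\gamma_q^{(n+1)}$: the linear part of the gauge action cancels $q^{n+1}\beta_{n+1}$, and all remaining contributions are of order at least $q^{2(n+1)}$, hence certainly $O(q^{n+2})$. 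The limit $\gamma_q:=\lim_n\gamma_q^{(n)}$ exists in the $q$-adic topology and satisfies $\Gamma_{\exp(\gamma_q)}(\alpha_q)=0$.

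The one substantive input is gauge-invariance of the Kaledin class (already cited as \cite[Proposition 7.3(a)]{lunts10} in the excerpt), which lets us regenerate a witness $\eta_q$ at each inductive step directly from the updated Maurer-Cartan element, rather than having to transport the original $\beta_q$ through a growing stack of gauge transformations. The main obstacle is not conceptual but bookkeeping: one has to organize the BCH compositions in the pronilpotent group associated to $q\frakg^0[[q]]$ carefully enough to confirm that the nonlinear terms in \eqref{eq:gauge} never spoil the leading-order cancellation at each stage, and that the $q$-adic limit of the successive gauge transformations converges genuinely. Both follow at once from the fact that the update at stage $n$ has order at least $q^{n+1}$, so none of this presents a real difficulty.
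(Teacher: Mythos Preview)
The paper does not give its own proof of this lemma; it is stated with attribution to \cite[Proposition 7.3(b)]{lunts10} and nothing more. Your argument is correct and is essentially the standard proof one would expect to find in that reference: the order-by-order induction, using gauge-invariance of the Kaledin class to regenerate a coboundary witness $\eta_q$ for each successive representative, is exactly the right mechanism, and your bookkeeping (the $q^n$-coefficient comparison giving $(n+1)\beta_{n+1}=d\eta_n$, and the fact that the nonlinear gauge terms land in $O(q^{2n+2})$) is accurate.
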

%
%

\begin{application}
Given an $A_\infty$-algebra (or curved $A_\infty$-algebra) $\scrA$, one can consider the shifted Hochschild cochain space
\begin{equation}
\frakg = \mathit{CC}^*(\scrA,\scrA)[1] = \prod_{d \geq 0} \mathit{Hom}^{*+1-d}(\scrA^{\otimes d},\scrA),
\end{equation}
which is a dg Lie algebra with the Gerstenhaber bracket, and with the differential determined by $\mu^*_{\scrA}$. The corresponding deformation problem is that of classifying formal $A_\infty$-deformations $\scrA_q$ (which, again, may have a curvature term). The complex $(\mathit{CC}^*(\scrA,\scrA)[[q]],d_{\alpha_q})$ is the Hochschild complex of the deformed structure $\scrA_q$, and the element \eqref{eq:kaledin-class} is \eqref{eq:dq-class}. There is also a straightforward generalization to $A_\infty$-categories. Hence, Lemma \ref{th:deformation-is-trivial} is an instance of Lemma \ref{th:general-kaledin}.
\end{application}

\appsubsection{The filtered theory}
One variation on this story is to consider a dg Lie algebra $\frakg$ which is pronilpotent, which means that it comes with a complete decreasing filtration
\begin{equation} \label{eq:pronilpotent}
\begin{aligned}
& \frakg = F^{(1)} \frakg \supset F^{(2)} \frakg \supset \cdots, \\
& d(F^{(i)} \frakg) \subset F^{(i)} \frakg, \\
& [F^{(i)} \frakg, F^{(j)} \frakg] \subset F^{(i+j)} \frakg.
\end{aligned}
\end{equation}
One can then consider solutions $\alpha$ of \eqref{eq:maurer-cartan} in $\frakg^1$ itself, and let the group associated to $\frakg^0$ act on them. Essentially, the filtration takes over the role (ensuring convergence) previously played by the formal parameter $q$. The leading part of a deformation is now the image $\alpha^{(1)}$ of $\alpha$ in $\frakg/F^{(2)}\frakg$, which gives rise to a gauge invariant class
\begin{equation} \label{eq:f2-quotient}
[\alpha^{(1)}] \in H^1(\frakg/F^{(2)}\frakg).
\end{equation}
However, where before, the same cohomology groups were responsible for deformation theory at any order in $q$, we now encounter all the different groups $H^*(F^{(i)}\frakg/F^{(i+1)}\frakg)$. The basic rigidity and unobstructedness results are:

\begin{lemma} \label{th:linear-def-2}
Suppose that
\begin{equation} \label{eq:degree-restriction-2}
H^1(F^{(i)} \frakg/ F^{(i+1)} \frakg) = 0 \quad \text{for all $i \geq 2$.}
\end{equation}
Then, two Maurer-Cartan elements in $\frakg$ are equivalent if and only if they yield the same class \eqref{eq:f2-quotient}.
\end{lemma}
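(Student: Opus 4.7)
The ``only if'' direction is immediate from \eqref{eq:gauge}: for any $\gamma \in \frakg^0 = F^{(1)}\frakg^0$, the bracket $[\gamma,\alpha]$ and all higher-order terms of the gauge action lie in $F^{(2)}\frakg$ by \eqref{eq:pronilpotent}, so $\Gamma_{\exp(\gamma)}(\alpha) \equiv \alpha - d\gamma \pmod{F^{(2)}\frakg}$, which shows that the class \eqref{eq:f2-quotient} is gauge invariant. For the ``if'' direction, my plan is to proceed by successive approximation along the filtration, mimicking the standard order-by-order construction familiar from the $q$-adic setting. As a base case, pick $\gamma_1 \in \frakg^0$ with $-d\gamma_1 \equiv \alpha'-\alpha \pmod{F^{(2)}\frakg}$, which exists by hypothesis, and replace $\alpha$ by $\Gamma_{\exp(\gamma_1)}(\alpha)$; afterwards one may assume $\alpha' - \alpha \in F^{(2)}\frakg$.

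For the inductive step, suppose we have arranged $\alpha' - \alpha \in F^{(i)}\frakg$ for some $i \geq 2$. The key observation is that the difference is automatically $d$-closed modulo $F^{(i+1)}\frakg$: subtracting the two Maurer-Cartan equations gives $d(\alpha'-\alpha) + \tfrac12 [\alpha'-\alpha, \alpha+\alpha'] = 0$, and the bracket lies in $F^{(i+1)}\frakg$ by \eqref{eq:pronilpotent}, since $\alpha, \alpha' \in F^{(1)}\frakg$ and $\alpha'-\alpha \in F^{(i)}\frakg$. Thus $\alpha'-\alpha$ defines a class in $H^1(F^{(i)}\frakg/F^{(i+1)}\frakg)$, which vanishes by the assumption \eqref{eq:degree-restriction-2}. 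One then finds $\gamma_i \in F^{(i)}\frakg^0$ with $-d\gamma_i \equiv \alpha'-\alpha \pmod{F^{(i+1)}\frakg}$; because $\gamma_i$ lies deep in the filtration, $[\gamma_i,\alpha]$ and all higher-order corrections lie in $F^{(i+1)}\frakg$, and applying $\Gamma_{\exp(\gamma_i)}$ produces a new Maurer-Cartan element congruent to $\alpha'$ modulo $F^{(i+1)}\frakg$, without disturbing the agreement already achieved at lower orders.

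It remains to assemble the $\gamma_i$ into a single gauge element. I would form the finite products $\exp(\gamma_N) \cdots \exp(\gamma_2) \exp(\gamma_1)$ in the pronilpotent group associated to $\frakg^0$ and pass to the limit. Because $\gamma_{N+1} \in F^{(N+1)}\frakg^0$, successive partial products differ by an element of $\exp(F^{(N+1)}\frakg^0)$, so by completeness of the filtration the sequence converges to some $\exp(\gamma)$ satisfying $\Gamma_{\exp(\gamma)}(\alpha) = \alpha'$. The main obstacle, if any, is bookkeeping: one must carefully track signs in \eqref{eq:gauge} and verify that the Baker--Campbell--Hausdorff products really do converge in the filtration topology, but these are routine in the pronilpotent setting. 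The genuine content of the lemma is the cohomological vanishing \eqref{eq:degree-restriction-2}, which is precisely what makes each inductive step solvable.
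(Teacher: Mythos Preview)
Your proof is correct and follows the standard obstruction-theoretic argument. Note, however, that the paper itself does not supply a proof of this lemma: it is stated in the appendix as one of the ``basic rigidity and unobstructedness results'' for the filtered setting, without further argument, so there is no paper proof to compare against. Your write-up is exactly the kind of routine inductive verification one would expect to fill that gap, and all the steps (gauge invariance of the leading class, closedness of $\alpha'-\alpha$ modulo $F^{(i+1)}$ via the difference of Maurer--Cartan equations, solvability using \eqref{eq:degree-restriction-2}, and convergence of the BCH products by completeness) are handled correctly.
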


\begin{lemma} \label{th:linear-def-1}
Suppose that
\begin{equation} \label{eq:degree-restriction-1}
H^2(F^{(i)} \frakg/ F^{(i+1)} \frakg) = 0 \quad \text{for all $i \geq 2$.}
\end{equation}
Then, any class in $H^1(\frakg/F^{(2)}\frakg)$ can be realized as the leading term of a Maurer-Cartan element in $\frakg$.
\end{lemma}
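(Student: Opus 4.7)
The plan is an order-by-order construction along the filtration \eqref{eq:pronilpotent}. Start by fixing a cocycle $\alpha^{(1)}$ in $\frakg^1/F^{(2)}\frakg$ representing the prescribed class, and lift it arbitrarily to some $\alpha \in \frakg^1 = F^{(1)}\frakg^1$. Because $[F^{(1)}\frakg, F^{(1)}\frakg] \subset F^{(2)}\frakg$ and $d\alpha^{(1)} = 0$ in the quotient, the curvature $\omega := d\alpha + \half[\alpha,\alpha]$ lies in $F^{(2)}\frakg^2$. Thus $\alpha$ solves the Maurer-Cartan equation modulo $F^{(2)}\frakg$, which provides the base case of the induction.

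For the inductive step, suppose we have produced $\alpha \in \frakg^1$ that reduces to $\alpha^{(1)}$ mod $F^{(2)}\frakg$ and satisfies $\omega \in F^{(i+1)}\frakg^2$ for some $i \geq 1$. The Bianchi identity $d\omega + [\alpha,\omega] = 0$, obtained by differentiating $\omega = d\alpha + \half[\alpha,\alpha]$ and applying the graded Jacobi identity, combined with $[F^{(1)}\frakg, F^{(i+1)}\frakg] \subset F^{(i+2)}\frakg$, implies that the image of $\omega$ in $F^{(i+1)}\frakg/F^{(i+2)}\frakg$ is a cocycle. Hypothesis \eqref{eq:degree-restriction-1}, applied at index $i+1 \geq 2$, forces this class to vanish, so there exists $\beta \in F^{(i+1)}\frakg^1$ with $\omega - d\beta \in F^{(i+2)}\frakg^2$.

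Setting $\alpha' := \alpha - \beta$, a direct expansion gives
\begin{equation*}
d\alpha' + \half[\alpha',\alpha'] = (\omega - d\beta) - [\alpha,\beta] + \half[\beta,\beta],
\end{equation*}
and each summand on the right lies in $F^{(i+2)}\frakg^2$: the first by the choice of $\beta$, the second because $[F^{(1)}\frakg, F^{(i+1)}\frakg] \subset F^{(i+2)}\frakg$, and the third because $2(i+1) \geq i+2$ for $i \geq 0$. Since $\beta \in F^{(i+1)}\frakg \subset F^{(2)}\frakg$, the element $\alpha'$ still reduces to $\alpha^{(1)}$ modulo $F^{(2)}\frakg$. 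The induction therefore advances from $i$ to $i+1$, and the successive corrections live in ever-deeper pieces of the filtration; by completeness of the filtration, the sequence converges to a limit $\tilde\alpha \in \frakg^1$ which is an exact Maurer-Cartan element with the prescribed leading term.

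The main technical point is the Bianchi identity together with the filtration-degree bookkeeping that places each obstruction in a group controlled by \eqref{eq:degree-restriction-1}; everything else is formal manipulation. Note that only $H^2$ of the filtration quotients is constrained, reflecting the classical principle that $H^2$ governs obstructions, and keeping the argument logically independent of Lemma \ref{th:linear-def-2}.
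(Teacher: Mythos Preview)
Your argument is correct and is precisely the standard obstruction-theoretic proof one expects here. The paper itself does not supply a proof of this lemma; it is stated as one of the ``basic rigidity and unobstructedness results'' for the filtered theory and left to the reader, so there is nothing substantive to compare against. Your use of the Bianchi identity to show the obstruction class is a cocycle, the invocation of \eqref{eq:degree-restriction-1} at filtration level $i+1\geq 2$, and the appeal to completeness for convergence are exactly the ingredients such a proof requires.
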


In the present setup, one can't generally expect to have an analogue of the Kaledin class. However, there is a (fairly obvious) subclass where such an analogue can be constructed. Namely, suppose that our filtration has a splitting compatible with all the given structure, meaning that
\begin{equation} \label{eq:bigraded}
\begin{aligned}
& \frakg = \prod_{i \geq 1} \frakg^{(i)}, \\
& d(\frakg^{(i)}) \subset \frakg^{(i)}, \\
& [\frakg^{(i)},\frakg^{(j)}] \subset \frakg^{(i+j)}.
\end{aligned}
\end{equation}
The filtration is $F^{(i)} = \prod_{j \geq i} \frakg^{(j)}$, and we still require that \eqref{eq:pronilpotent} holds (or equivalently, that differential and bracket are continuous with respect to the inverse limit topology). In that case, expanding $\alpha = \alpha^{(1)} + \alpha^{(2)} + \cdots$, one can define
\begin{equation} \label{eq:new-kaledin}
\kappa(\alpha) = [\alpha^{(1)} + 2\alpha^{(2)} + \cdots] \in H^*(\frakg,d_\alpha).
\end{equation}
For this class, an analogue of Lemma \ref{th:general-kaledin} holds. In fact, one can artificially introduce a formal parameter (which we denote by $h$, to distinguish it from similar developments later on) and write $\alpha_h = h\alpha^{(1)} + h^2\alpha^{(2)} + \cdots$. This reduces the situation to the previous one, with the added constraint that we have to preserve the additional grading on $\frakg[[h]]$ obtained by starting with the decomposition \eqref{eq:bigraded} and giving $h$ degree $-1$.

\begin{application}
Let $\scrA$ be a graded associative algebra. Take $\frakg \subset \mathit{CC}^*(\scrA,\scrA)[1]$ to be the subspace of Hochschild cochains which decrease degrees (with respect to the grading of $\scrA$). We equip this with the Gerstenhaber bracket, and with the differential obtained from the algebra structure of $\scrA$. This is a pronilpotent dg Lie algebra, which classifies $A_\infty$-extensions of the algebra structure (by $\mu^d$ terms, $d \geq 3$). Note that this is actually an instance of \eqref{eq:bigraded}, where $\frakg^{(i)}$ consists of Hochschild cochains of degree $-i$ (again, with respect to the grading of $\scrA$).

Consider the Euler derivation of $\scrA$, which multiplies each element by its degree. This is an element $\epsilon \in \mathit{CC}^1(\scrA,\scrA)$, and with respect to the twisted differential $d_\alpha$ associated to an $A_\infty$-extension of the algebra structure, it satisfies
\begin{equation}
d_{\alpha}\epsilon = \alpha^{(1)} + 2\alpha^{(2)} + \cdots,
\end{equation}
which is \eqref{eq:new-kaledin}. Since $\epsilon$ itself does not lie in $\frakg$, this does not show that the Kaledin class always vanishes; instead, we see that the Kaledin class vanishes if and only if there is $d_{\alpha}$-cocycle in the space $\epsilon + \frakg$. This explains how the general theory produces the formality criterion from \cite[Remark 7.6]{seidel-solomon10}.
\end{application}

\begin{application} \label{th:divisor-cl}
We consider the classification theory of noncommutative divisors with fixed $\scrA$ and $\scrP$ (see Section \ref{subsec:nc-divisors}). Take $\scrB$ as in \eqref{eq:b-from-a}, and equip it with a weight grading as before. Define $\frakg \subset \mathit{CC}^*(\scrB,\scrB)[1]$ to be the subspace of Hochschild cochains which strictly increase weight. The filtration by weight makes $\frakg$ into a pronilpotent dg Lie algebra, and in fact into an instance of \eqref{eq:bigraded}. The cohomology of the pieces $\frakg^{(i)}$ is determined by the long exact sequence \cite[Section 2c]{seidel14b}
\begin{equation} \label{eq:divisor-les}
\begin{aligned}
& \cdots \rightarrow H^{j-2i}(\mathit{hom}_{(\scrA,\scrA)}(\scrP^{\otimes_{\scrA} i},\scrA)) \\
& \qquad \qquad \qquad \qquad\longrightarrow H^j(\frakg^{(i)}) \longrightarrow H^{j-2i+1}(\mathit{hom}_{(\scrA,\scrA)}(\scrP^{\otimes_{\scrA} i},\scrA)) \rightarrow \cdots 
\end{aligned}
\end{equation}
Suppose that \eqref{eq:no-negative} holds. Then $\frakg$ satisfies \eqref{eq:degree-restriction-1} as well as \eqref{eq:degree-restriction-2}. Moreover, from \eqref{eq:divisor-les} for $i = 1$ we then get
\begin{equation} \label{eq:i-1-map}
0 \rightarrow H^1(\frakg^{(1)}) \longrightarrow H^0(\hom_{(\scrA,\scrA)}(\scrP,\scrA)) \longrightarrow
H^0(\hom_{(\scrA,\scrA)}(\scrP,\scrA)) \rightarrow \cdots
\end{equation}
Applying Lemmas \ref{th:linear-def-2} and \ref{th:linear-def-1}, one sees that noncommutative divisors are classified (up to isomorphism) by a certain subspace of $H^0(\hom_{(\scrA,\scrA)}(\scrP,\scrA))$, which is the kernel of the map in \eqref{eq:i-1-map}. In particular, the uniqueness part yields Lemma \ref{th:classification-theory-of-divisors}.
\end{application}

\appsubsection{A combined setup}
Take a pronilpotent Lie algebra $\frakg$ as in \eqref{eq:pronilpotent}. However, this time we want to consider solutions $\alpha_q$ of \eqref{eq:maurer-cartan} in $\frakg^1[[q]]$, modulo the action of exponentiated elements of $\frakg^0[[q]]$. The leading order term (with respect to the filtration) of such a Maurer-Cartan element gives rise to a class
\begin{equation} \label{eq:q-leading}
[\alpha_q^{(1)}] \in H^1(\frakg/F^{(2)}\frakg)[[q]].
\end{equation}
The analogues of Lemmas \ref{th:linear-def-2} and \ref{th:linear-def-1} are:

\begin{lemma} \label{th:q-def-2}
Suppose that \eqref{eq:degree-restriction-2} holds. Then, two Maurer-Cartan elements in $\frakg[[q]]$ are equivalent if and only if they yield the same class \eqref{eq:q-leading}.
\end{lemma}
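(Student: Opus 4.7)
The plan is to handle the two directions separately. The ``only if'' direction is routine: under the gauge action \eqref{eq:gauge}, every commutator term has one factor drawn from $\gamma_q \in F^{(1)}\frakg[[q]]$ and one from $\alpha_q \in F^{(1)}\frakg[[q]]$, and therefore lies in $F^{(2)}\frakg[[q]]$. Thus modulo $F^{(2)}\frakg[[q]]$ the transformation reduces to $\alpha_q \mapsto \alpha_q - d\gamma_q$, which preserves the cohomology class \eqref{eq:q-leading}.

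For the ``if'' direction I would build the required gauge transformation layer by layer, in parallel to the unfiltered Lemma \ref{th:linear-def-2} but keeping the formal variable $q$ in tow. The aim is to construct $\gamma_q^{(i)} \in F^{(i)}\frakg^0[[q]]$, for $i \geq 1$, such that after successively applying $\exp(\gamma_q^{(1)}), \exp(\gamma_q^{(2)}), \dots$ to $\alpha_q'$, the resulting element agrees with $\alpha_q$ modulo $F^{(i+1)}\frakg[[q]]$. For the base step $i=1$, the hypothesis that the classes \eqref{eq:q-leading} agree supplies $\gamma_q^{(1)} \in \frakg^0[[q]]$ with $\alpha_q - \alpha_q' \equiv d\gamma_q^{(1)} \pmod{F^{(2)}\frakg[[q]]}$. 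Once the full sequence is constructed, pronilpotence \eqref{eq:pronilpotent} ensures that the infinite composition converges in the inverse limit topology to a single gauge transformation carrying $\alpha_q'$ to $\alpha_q$.

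For the inductive step, suppose the current representative $\tilde\alpha_q'$ satisfies $\alpha_q \equiv \tilde\alpha_q' \pmod{F^{(i)}\frakg[[q]]}$ with $i \geq 2$, and let $\bar\beta_q$ denote the image of $\alpha_q - \tilde\alpha_q'$ in $(F^{(i)}/F^{(i+1)})\frakg^1[[q]]$. Subtracting the Maurer-Cartan equations for $\alpha_q$ and $\tilde\alpha_q'$ yields
\begin{equation*}
d(\alpha_q - \tilde\alpha_q') + \half \bigl[\alpha_q + \tilde\alpha_q',\, \alpha_q - \tilde\alpha_q'\bigr] = 0,
\end{equation*}
and the bracket term already lies in $F^{(i+1)}\frakg[[q]]$ since $\alpha_q + \tilde\alpha_q' \in F^{(1)}\frakg[[q]]$. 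Therefore $\bar\beta_q$ is a $d$-cocycle in $F^{(i)}/F^{(i+1)}$, and the vanishing hypothesis \eqref{eq:degree-restriction-2} applied coefficient-by-coefficient in $q$ produces $\gamma_q^{(i)} \in F^{(i)}\frakg^0[[q]]$ with $d\gamma_q^{(i)} \equiv \bar\beta_q \pmod{F^{(i+1)}\frakg[[q]]}$. Acting by $\exp(\gamma_q^{(i)})$ changes $\tilde\alpha_q'$ by $-d\gamma_q^{(i)}$ modulo $F^{(i+1)}$ (the commutator terms now belonging to $F^{(i+1)}$ or deeper), closing the induction. I do not expect any substantive obstacle here: the argument is entirely parallel to the unfiltered classification, and the coefficient-wise extraction of primitives in $q$ is automatic because tensoring with $\bC[[q]]$ is exact over $\bC$.
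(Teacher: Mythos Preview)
Your argument is correct and is exactly the standard obstruction-theoretic induction one expects here. The paper itself does not supply a proof: it states Lemma~\ref{th:q-def-2} as the evident $q$-parametrized analogue of Lemma~\ref{th:linear-def-2}, which is likewise left unproved. Your write-up is therefore more detailed than what the paper provides.

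One cosmetic remark: with the sign convention of \eqref{eq:gauge}, acting by $\exp(\gamma_q^{(i)})$ shifts $\tilde\alpha_q'$ by $-d\gamma_q^{(i)}$ modulo $F^{(i+1)}$, so to absorb the difference $\bar\beta_q = \alpha_q - \tilde\alpha_q'$ you want $d\gamma_q^{(i)} \equiv -\bar\beta_q$ rather than $+\bar\beta_q$. This is of course immaterial, since you can just flip the sign of $\gamma_q^{(i)}$.
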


\begin{lemma} \label{th:q-filtered}
Suppose that \eqref{eq:degree-restriction-1} holds. Then, any class in $H^1(\frakg/F^{(2)}\frakg)[[q]]$ can be realized by a Maurer-Cartan element in $\frakg[[q]]$.
\end{lemma}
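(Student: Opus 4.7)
The plan is a standard order-by-order obstruction argument: start with any cochain lift of the given leading-order class, and successively correct it by deeper and deeper elements of the filtration to enforce the Maurer-Cartan equation. The vanishing hypothesis \eqref{eq:degree-restriction-1} is exactly what is needed to kill the obstruction at each stage. The formal parameter $q$ plays a purely passive role, since everything extends $\bC[[q]]$-linearly, so I write all formulas without carrying $q$ through the argument explicitly.

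\textbf{Step 1: initial lift.} Choose a cocycle representative $\bar\alpha_q^{(1)}\in(\frakg/F^{(2)}\frakg)^1[[q]]$ of the prescribed class, and pick any lift $\alpha_q\in\frakg^1[[q]]$ of it. Such a lift exists coefficient-by-coefficient in $q$, since $\frakg\to\frakg/F^{(2)}\frakg$ is surjective.

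\textbf{Step 2: inductive correction.} I claim that for every $N\geq 2$ one can modify $\alpha_q$ by an element of $F^{(2)}\frakg^1[[q]]$, without changing its image in $\frakg/F^{(2)}\frakg$, so that
\begin{equation*}
\omega_q \;:=\; d\alpha_q + \tfrac{1}{2}[\alpha_q,\alpha_q] \;\in\; F^{(N)}\frakg^2[[q]].
\end{equation*}
For $N=2$ this holds automatically: the differential term vanishes modulo $F^{(2)}$ because $\bar\alpha_q^{(1)}$ is $d$-closed, and the bracket term lies in $F^{(2)}$ by the compatibility $[F^{(1)},F^{(1)}]\subset F^{(2)}$ from \eqref{eq:pronilpotent}. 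For the inductive step, the graded Jacobi identity yields the Bianchi-type relation
\begin{equation*}
d\omega_q \;=\; -[\alpha_q,\omega_q].
\end{equation*}
Since $\alpha_q\in F^{(1)}\frakg[[q]]$ and $\omega_q\in F^{(N)}\frakg^2[[q]]$, the right-hand side lies in $F^{(N+1)}\frakg^3[[q]]$, so $\omega_q$ defines a $d$-cocycle class in $H^2(F^{(N)}\frakg/F^{(N+1)}\frakg)[[q]]$. By \eqref{eq:degree-restriction-1} this group vanishes, so there exists $\gamma_q\in F^{(N)}\frakg^1[[q]]$ with $d\gamma_q\equiv \omega_q$ modulo $F^{(N+1)}$. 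Replacing $\alpha_q$ by $\alpha_q-\gamma_q$, the Maurer-Cartan defect moves into $F^{(N+1)}\frakg^2[[q]]$, and because $N\geq 2$ the image in $\frakg/F^{(2)}\frakg$ is unaffected.

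\textbf{Step 3: convergence and conclusion.} The successive corrections $\gamma_q^{(N)}$ produced in Step 2 lie in $F^{(N)}\frakg^1[[q]]$, so their partial sums form a Cauchy sequence in the filtration topology on each $q$-coefficient. By the completeness built into \eqref{eq:pronilpotent}, this sequence converges to some $\alpha_q\in\frakg^1[[q]]$ which solves \eqref{eq:maurer-cartan} and has the prescribed image in $H^1(\frakg/F^{(2)}\frakg)[[q]]$.

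\textbf{Main obstacle.} The only point with genuine content is the verification that $\omega_q$ is a cocycle modulo $F^{(N+1)}$, which reduces to the Bianchi identity together with the graded compatibility of the bracket with the filtration. Everything else is formal bookkeeping, and the argument is parallel to the proof of Lemma \ref{th:linear-def-1} in the unparameterized setting, with $q$ carried along trivially.
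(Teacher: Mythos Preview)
Your argument is correct and is precisely the standard filtered obstruction-theory proof that the paper has in mind; the paper itself does not spell out a proof but simply presents Lemma~\ref{th:q-filtered} as the $q$-parametrized analogue of Lemma~\ref{th:linear-def-1}. The only minor addition worth noting is that in Step~3 you implicitly use continuity of the Maurer--Cartan defect (equivalently, that $\frakg[[q]]$ inherits completeness from $\frakg$), which follows from the completeness hypothesis in \eqref{eq:pronilpotent}.
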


In this framework, one can meaningfully restrict attention to certain kinds of subalgebras of $\bC[[q]]$. For $\frakg$ as in \eqref{eq:bigraded}, suppose that a $\bC$-linear subspace $V \subset \bC[[q]]$ is given. Defining its symmetric products as in \eqref{eq:v-subalgebra}, take the dg Lie algebra
\begin{equation}
\frakg[V] = \prod_i \frakg^{(i)} \otimes S^i[V] \subset \frakg[[g]].
\end{equation}
One can then consider solutions of the Maurer-Cartan equation in $\frakg[V]^1$, and there is a natural action of exponentiated elements of $\frakg[V]^0$. The primary invariant of a Maurer-Cartan element is the class
\begin{equation} \label{eq:v-leading}
[\alpha_q^{(1)}] \in H^1(\frakg^{(1)}) \otimes V = H^1(\frakg/F^{(2)}\frakg) \otimes V.
\end{equation}

\begin{lemma}
Suppose that \eqref{eq:degree-restriction-2} holds. Then, two Maurer-Cartan elements in $\frakg[V]$ are equivalent if and only if they yield the same class \eqref{eq:v-leading}.
\end{lemma}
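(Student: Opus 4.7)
The plan is to reduce this statement to the unfiltered analogue, Lemma \ref{th:linear-def-2}, applied to $\frakg[V]$ itself. The point is that $\frakg[V] \subset \frakg[[q]]$ is in its own right a pronilpotent dg Lie algebra, and the hypothesis \eqref{eq:degree-restriction-2} on $\frakg$ transfers cleanly to it.

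First I would check that $\frakg[V]$ is closed under differential and bracket. Closure under $d$ is immediate, since $d$ preserves each $\frakg^{(i)}$ and is $\bC$-linear, so $d(\frakg^{(i)} \otimes S^i[V]) \subset \frakg^{(i)} \otimes S^i[V]$. For the bracket, I would use the defining property of the symmetric products, $S^i[V] \cdot S^j[V] \subset S^{i+j}[V]$ (any product of a homogeneous degree $i$ polynomial in elements of $V$ with a homogeneous degree $j$ one is a homogeneous degree $i+j$ polynomial), combined with $[\frakg^{(i)}, \frakg^{(j)}] \subset \frakg^{(i+j)}$ from \eqref{eq:bigraded}. This gives
\begin{equation*}
[\frakg^{(i)} \otimes S^i[V], \; \frakg^{(j)} \otimes S^j[V]] \subset \frakg^{(i+j)} \otimes S^{i+j}[V].
\end{equation*}
Equipping $\frakg[V]$ with the induced filtration $F^{(j)}\frakg[V] = \prod_{i \geq j} \frakg^{(i)} \otimes S^i[V]$, the pronilpotence condition \eqref{eq:pronilpotent} is satisfied.

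Second, I would compute the successive quotients. By construction,
\begin{equation*}
F^{(i)}\frakg[V]/F^{(i+1)}\frakg[V] \;\iso\; \frakg^{(i)} \otimes S^i[V].
\end{equation*}
Since tensoring a cochain complex of $\bC$-vector spaces with a $\bC$-vector space is exact, one has $H^1(\frakg^{(i)} \otimes S^i[V]) \iso H^1(\frakg^{(i)}) \otimes S^i[V]$, which vanishes for $i \geq 2$ by \eqref{eq:degree-restriction-2}. Thus the analogue of \eqref{eq:degree-restriction-2} holds for $\frakg[V]$.

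Finally, I would invoke Lemma \ref{th:linear-def-2} for the pronilpotent dg Lie algebra $\frakg[V]$. Its conclusion says precisely that two Maurer-Cartan elements in $\frakg[V]$ are gauge equivalent (under the group associated to $\frakg[V]^0$) if and only if they induce the same class in $H^1(\frakg[V]/F^{(2)}\frakg[V]) = H^1(\frakg^{(1)} \otimes V) = H^1(\frakg^{(1)}) \otimes V$, which by the identification $\frakg^{(1)} = \frakg/F^{(2)}\frakg$ is exactly \eqref{eq:v-leading}. There is no real obstacle here: the entire argument is bookkeeping, and the only point requiring a moment's attention is the multiplicative closure $S^i[V] \cdot S^j[V] \subset S^{i+j}[V]$, which is built into the definition \eqref{eq:v-subalgebra}.
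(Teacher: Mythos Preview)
Your argument is correct. The paper does not actually spell out a proof of this lemma (nor of the neighbouring ones in the appendix); they are stated as routine consequences of the general Maurer-Cartan formalism. Your reduction to Lemma~\ref{th:linear-def-2}, by observing that $\frakg[V]$ is itself a pronilpotent dg Lie algebra with graded quotients $\frakg^{(i)} \otimes S^i[V]$, is exactly the intended (and essentially only) way to make this precise, so there is nothing to compare.
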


\begin{lemma} \label{th:existence-in-v}
Suppose that \eqref{eq:degree-restriction-1} holds. Then, any class in $H^1(\frakg/F^{(2)}\frakg) \otimes V$ can be realized by a Maurer-Cartan element in $\frakg[V]$.
\end{lemma}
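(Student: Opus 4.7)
I would construct the Maurer-Cartan element $\alpha = \sum_{i \geq 1} \alpha^{(i)}$ by recursion on $i$, arranging that $\alpha^{(i)} \in \frakg^{(i)} \otimes S^i[V]$ at every step. The construction mirrors the standard proof of Lemma~\ref{th:linear-def-1}, with one extra bookkeeping observation to ensure each summand lies in the prescribed subspace rather than merely in $\frakg^{(i)} \otimes \bC[[q]]$.

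For the base case, pick a cocycle representative $\alpha^{(1)} \in \frakg^{(1)} \otimes V$ of the prescribed class in $H^1(\frakg^{(1)}) \otimes V$; this is possible because the differential on $\frakg$ respects the splitting \eqref{eq:bigraded}, so the class may be written as a finite sum $\sum_k c_k \otimes v_k$ with each $c_k \in \frakg^{(1)}$ a $d$-cocycle and $v_k \in V$. Assume inductively that $\alpha^{(1)}, \ldots, \alpha^{(i-1)}$ have been constructed. The Maurer-Cartan equation truncated at filtration level $i$ demands
\begin{equation}
d\alpha^{(i)} = -\tfrac{1}{2} \sum_{\substack{j+k=i \\ j,k \geq 1}} [\alpha^{(j)}, \alpha^{(k)}].
\end{equation}
By \eqref{eq:bigraded} the bracket sends $\frakg^{(j)} \otimes \frakg^{(k)}$ into $\frakg^{(i)}$; and directly from the definition \eqref{eq:v-subalgebra}, the product inside $\bC[[q]]$ satisfies $S^j[V] \cdot S^k[V] \subset S^{i}[V]$. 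Hence the right-hand side automatically lies in $\frakg^{(i)} \otimes S^i[V]$, and a standard computation using the Jacobi identity together with the already-verified Maurer-Cartan relations at lower filtration levels shows it to be $d$-closed.

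To complete the inductive step I must exhibit a primitive lying inside $\frakg^{(i)} \otimes S^i[V]$. This is the point where hypothesis \eqref{eq:degree-restriction-1} enters: one has $H^2(\frakg^{(i)}) = 0$ for $i \geq 2$. Since $S^i[V]$ is a $\bC$-vector space and tensor product over $\bC$ is exact, the identity
\begin{equation}
H^2(\frakg^{(i)} \otimes S^i[V]) = H^2(\frakg^{(i)}) \otimes S^i[V] = 0
\end{equation}
holds, so any degree-$2$ cocycle in $\frakg^{(i)} \otimes S^i[V]$ is a coboundary in the same space. Declaring $\alpha^{(i)}$ to be any such primitive closes the recursion, and the resulting $\alpha \in \frakg[V]^1$ satisfies \eqref{eq:maurer-cartan} with leading term representing the prescribed class.

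The only substantive obstacle is the insistence that each primitive lie within $\frakg^{(i)} \otimes S^i[V]$ rather than merely in $\frakg^{(i)} \otimes \bC[[q]]$; this is handled once and for all by the flatness observation above. All remaining bookkeeping (verifying closedness of the obstruction, checking that the assembled series lies in $\frakg[V]$ by virtue of pronilpotency) is a direct transcription of the unfiltered Maurer-Cartan existence argument.
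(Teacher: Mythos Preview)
Your argument is correct and is precisely the standard obstruction-theoretic construction the paper has in mind: the paper does not spell out a proof of this lemma, treating it (like the surrounding Lemmas~\ref{th:linear-def-2}--\ref{th:q-filtered}) as a routine variant of the basic unobstructedness argument. Your key observation---that the obstruction at step $i$ lands in $\frakg^{(i)} \otimes S^i[V]$ because $S^j[V]\cdot S^k[V]\subset S^{j+k}[V]$, and that exactness of $-\otimes_{\bC} S^i[V]$ then lets one choose the primitive inside the same subspace---is exactly the point that distinguishes this lemma from Lemma~\ref{th:q-filtered}, and you have handled it correctly.
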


\begin{application}
Take $\frakg$ as in Application \ref{th:divisor-cl}. Then, $\frakg[[q]]$ describes the theory of deformations of noncommutative divisors. If \eqref{eq:no-negative} holds, one can apply Lemma \ref{th:existence-in-v}. In view of \eqref{eq:i-1-map}, this implies Lemma \ref{th:v-restriction}.
\end{application}

\appsubsection{Fields of definition for rigid objects}
When looking at Conjecture \ref{th:3} and Remark \ref{th:split-generation} in combination, the following question comes up. Consider the Fukaya category of a closed Calabi-Yau manifold, defined over $\overline{\bC((q))}$. Suppose that we have a split-generating full subcategory, which is defined over some sub-field of $\overline{\bC((q))}$. What are the implications of that fact for objects of the Fukaya category (not belonging to that subcategory)? Clearly, there is very little one can say about a general object, which might be a Lagrangian submanifold with any $\overline{\bC((q))}$-local coefficient system. Instead, we should restrict to objects that are rigid (have no nontrivial degree $1$ endomorphisms). This type of question is a classical one in algebraic geometry, and we can use the arguments developed there to obtain a satisfactory answer.

The general situation is this: consider algebraically closed fields $\bK \subset \tilde{\bK}$. Let $\scrB$ be an $A_\infty$-category over $\bK$. We consider the associated triangulated enlargement, the category $\Tw(\scrB)$ of twisted complexes, and its (chain level) idempotent completion $\Tw^{\pi}(\scrB)$ (this is $\Pi \Tw(\scrB)$ in the notation of \cite[Section 4c]{seidel04}; it is quasi-equivalent to the category of perfect modules over $\scrB$). Let's extend coefficients to $\tilde{\bK}$, forming $\tilde{\scrB} = \scrB \otimes_{\bK} \tilde{\bK}$. There is a commutative diagram
\begin{equation} \label{eq:k-k-embedding}
\xymatrix{
\Tw(\scrB) \otimes_{\bK} \tilde{\bK} \ar[d] \ar[rr] && \Tw(\tilde{\scrB}) \ar[d] \\
\Tw^{\pi}(\scrB) \otimes_{\bK} \tilde{\bK} \ar[rr] && \Tw^{\pi}(\tilde{\scrB}),
}
\end{equation}
where the horizontal arrows are full and faithful (in fact, inclusions of full $A_\infty$-subcategories), and the vertical arrows are cohomologically full and faithful.

\begin{lemma} \label{th:artin}
Suppose that $\scrB$ is proper. Let $\tilde{P}$ be an object of $\Tw^{\pi}(\tilde{\scrB})$ which is rigid, meaning that 
\begin{equation}
H^1(\hom_{\Tw^\pi(\tilde{\scrB})}(\tilde{P},\tilde{P})) = 0. 
\end{equation}
Then $\tilde{P}$ is ``defined over $\bK$ up to quasi-isomorphism'', which means that there is some object $P$ of $\Tw^{\pi}(\scrB)$ whose image under the horizontal map in \eqref{eq:k-k-embedding} is quasi-isomorphic to $\tilde{P}$.
\end{lemma}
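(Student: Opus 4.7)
This is the classical ``spread out and specialize'' argument, with rigidity providing the key comparison.

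\emph{Spreading out.} Present $\tilde P$ as the idempotent summand of a twisted complex on finitely many objects $X_1,\dots,X_n\in\Ob\scrB$. The matrix entries of the twisted differential and of the idempotent lie in $\hom_\scrB(X_i,X_j)\otimes_\bK\tilde\bK$ and involve only finitely many elements of $\tilde\bK$; these generate a finitely generated $\bK$-subalgebra $A\subset\tilde\bK$, which is automatically a domain. Since the Maurer--Cartan and idempotent equations are polynomial, they hold over $A$, producing $P_A\in\Tw^\pi(\scrB\otimes_\bK A)$ with $P_A\otimes_A\tilde\bK\htp\tilde P$. Set $X=\mathrm{Spec}\,A$, an irreducible affine $\bK$-variety; after shrinking, $X$ may be assumed smooth.

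\emph{Semicontinuity and a $\bK$-point.} Properness of $\scrB$ means $\hom(P_A,P_A)$ is represented by a bounded complex of finite-rank projective $A$-modules, so each cohomology is finitely generated over $A$. The hypothesis $H^1(\hom(\tilde P,\tilde P))=0$ combined with faithful flatness of the field extension $\mathrm{Frac}(A)\hookrightarrow\tilde\bK$ forces $H^1(\hom(P_A,P_A))$ to vanish at the generic point of $X$; being finitely generated, it is therefore supported on a proper closed subset of $X$, so after inverting a suitable $f\in A$ we may assume $H^1(\hom(P_A,P_A))=0$ as an $A$-module. Because $\bK$ is algebraically closed and $A$ is a nonzero finitely generated $\bK$-algebra, the Nullstellensatz supplies a dense set of $\bK$-points in $X$; fix $x\in X(\bK)$ and set $P=P_A\otimes_A(A/\mathfrak m_x)\in\Tw^\pi(\scrB)$. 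This is our candidate object defined over $\bK$.

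\emph{Formal rigidity and algebraization.} It remains to show $P\otimes_\bK\tilde\bK\htp\tilde P$. Pull $P_A$ back to the formal completion $\widehat\scrO_{X,x}\iso\bK[\![t_1,\dots,t_d]\!]$, yielding a formal deformation of $P$ governed by Maurer--Cartan elements in the pronilpotent dg Lie algebra $\frakg=\hom(P,P)\otimes_\bK(\mathfrak m\,\widehat\scrO_{X,x})$ with the $\mathfrak m$-adic filtration. All successive quotients $F^{(i)}/F^{(i+1)}\iso\hom(P,P)\otimes_\bK(\mathfrak m^i/\mathfrak m^{i+1})$ have $H^1=0$, since $H^1(\hom(P,P))=0$ by the previous step. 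Lemma~\ref{th:linear-def-2}, together with vanishing of the leading-term obstruction in $H^1(\frakg/F^{(2)}\frakg)$ (which also follows from rigidity), then shows the formal deformation is gauge equivalent to the trivial one $P\otimes_\bK\widehat\scrO_{X,x}$. The gauge element is cut out by polynomial equations in its matrix coefficients, so Artin approximation promotes this formal gauge equivalence to an algebraic one defined over an étale neighborhood $U\to X$ of $x$: $P_A|_U\htp P\otimes_\bK\scrO(U)$.

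\emph{Conclusion and main obstacle.} Since $U\to X$ is étale with image an open neighborhood of $x$ in the irreducible scheme $X$, that image contains the generic point; hence the $\tilde\bK$-point of $X$ coming from the inclusion $A\hookrightarrow\tilde\bK$ lifts, using algebraic closure of $\tilde\bK$, to a $\tilde\bK$-point of $U$. Base change of the equivalence $P_A|_U\htp P\otimes_\bK\scrO(U)$ along this lift produces the desired $\tilde P\htp P\otimes_\bK\tilde\bK$ in $\Tw^\pi(\tilde\scrB)$. The principal difficulty is the algebraization step: infinitesimal rigidity is essentially free from Lemma~\ref{th:linear-def-2}, but converting it into an equivalence over the potentially much larger field $\tilde\bK$ requires the Artin approximation input. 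An alternative avoiding Artin uses chain-connectedness of $X$ in $\bK$-points combined with the Zariski-openness of the gauge orbit of $P$ in the Maurer--Cartan variety on $X_1\oplus\cdots\oplus X_n$, reaching the same conclusion by a longer route.
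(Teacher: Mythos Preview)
Your proof is correct and reaches the conclusion, but by a different route from the paper's. The paper constructs a single affine $\bK$-scheme $\scrU$ parametrizing \emph{all} triples $(d,p,h)$ of (strictly upper-triangular differential, homotopy idempotent, witnessing homotopy) on the fixed sum $C=X_1\oplus\cdots\oplus X_m$, cut out by the polynomial Maurer--Cartan and idempotency equations. On the open rigid locus $\scrU\setminus\scrS$ the quasi-isomorphism type is locally constant (openness of gauge orbits under rigidity); since $\bK$ is already algebraically closed, irreducible components and the closed set $\scrS$ are preserved under base change to $\tilde\bK$, so the $\tilde\bK$-point for $\tilde P$ lies in some $\tilde\scrU_l\setminus\tilde\scrS_l$, and any $\bK$-point of $\scrU_l\setminus\scrS_l$ supplies $P$. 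Your approach instead spreads out to a finitely generated $A\subset\tilde\bK$, proves \emph{formal} triviality of the resulting family at a chosen $\bK$-point via Lemma~\ref{th:linear-def-2}, and then invokes Artin approximation to algebraize. The paper's argument thus avoids Artin entirely, while yours is more explicit about the passage from infinitesimal to Zariski-local constancy. Amusingly, the ``alternative avoiding Artin'' you sketch at the end is essentially what the paper actually does, and is the shorter route rather than the longer one.

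One imprecision: after shrinking so that the $A$-module $H^1(\hom(P_A,P_A))$ vanishes, you assert $H^1(\hom(P,P))=0$ at the closed point $x$. For a perfect complex $E^\bullet$ over $A$, vanishing of $H^1(E^\bullet)$ does not imply vanishing of the fibre $H^1(E^\bullet\otimes_A k(x))$ (e.g.\ $A=\bK[t]$, $E^\bullet=(A\xrightarrow{0}A\xrightarrow{t}A)$). You should shrink to the open locus where the \emph{fibrewise} $H^1$ vanishes, which is legitimate by upper semicontinuity for perfect complexes.
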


\begin{proof}
Without loss of generality, we may assume that $\scrB$ is strictly proper (has chain level morphism spaces which are of finite total dimension). To simplify the notation, we also find it convenient to assume that $\scrB$ itself already admits shifts (translations). 

Our object $\tilde{P}$ is a direct summand (homotopy retract) of a twisted complex built from objects $X_1,\dots,X_m$ of the original $A_\infty$-category $\tilde{\scrB}$ (in that order). We want to consider ``all possible direct summands of twisted complexes built from the same pieces''. To implement this concretely, take the additive enlargement $\Sigma\scrB$ from \cite[Section 3c]{seidel04}, which allows finite direct sums. In it consider the object $C = \bigoplus_i X_i$. To make it into a twisted complex, we should equip it with a differential
\begin{equation}
d \in \mathit{hom}_{\Sigma\scrB}^1(C,C)
\end{equation}
which satisfies two conditions. One is strict upper triangularity:
\begin{equation} \label{eq:upper-triangular}
d = (d_{ji}), \quad d_{ji} \in \mathit{hom}_{\scrB}^1(X_i,X_j), 
\quad d_{ji} = 0 \text{ if } j \geq i.
\end{equation}
The second is the generalized Maurer-Cartan equation
\begin{equation} \label{eq:generalized-mc}
\mu^1_{\Sigma\scrB}(d) + \mu^2_{\Sigma\scrB}(d,d) + \cdots = 0.
\end{equation}
Writing out \eqref{eq:generalized-mc} shows that it is a system of polynomial equations (of degree $\leq m-1$) for the coefficients of $d$. Next, we need an endomorphism of the twisted complex $(C,d)$, which is idempotent up to chain homotopy. The necessary data are
\begin{align}
& p \in \mathit{hom}_{\Sigma\scrB}^0(C,C), \\
& h \in \mathit{hom}_{\Sigma\scrB}^{-1}(C,C).
\end{align}
The required conditions 
\begin{align}
& \mu^1_{\Tw(\scrB)}(p) = 0, \\
& \mu^1_{\Tw(\scrB)}(h) = p - \mu^2_{\Tw(\scrB)}(p,p),
\end{align}
can be written out as
\begin{align} \label{eq:pi-cocycle}
& \mu^1_{\Sigma\scrB}(p) + \mu^2_{\Sigma\scrB}(d,p) + \mu^2_{\Sigma\scrB}(p,d) + \cdots = 0, \\
& \label{eq:h-homotopy}
\begin{aligned}
& \mu^1_{\Sigma\scrB}(h) + \mu^2_{\Sigma\scrB}(d,h) + \mu^2_{\Sigma\scrB}(h,d) + \cdots \\
& \qquad = p - \mu^2_{\Sigma\scrB}(p,p) - \mu^3_{\Sigma\scrB}(d,p,p) - \cdots
\end{aligned}
\end{align}
The conditions \eqref{eq:generalized-mc}, \eqref{eq:pi-cocycle}, \eqref{eq:h-homotopy} are a finite set  polynomial equations for the coefficients of $d$, $p$ and $h$ (of degrees $\leq m+1$). They define an affine scheme over $\bK$, which we denote by $\scrU$. To every $\bK$-point of $\scrU$, one can associate an object of $\Tw^{\pi}(\scrB)$. We denote all such objects indiscriminately by $P$. $\scrU$ carries a bounded chain complex of (trivial) vector bundles, whose fibre is $\mathit{hom}_{\Sigma\scrB}(C,C)$, and whose differential is given by $\mu^1_{\Tw(\scrB)}$ at any point. The corresponding cohomology sheaves carry two idempotent endomorphisms, given by left and right composition with $p$ at any point. Hence, we get a coherent sheaf $\scrE$ on $\scrU$, whose fibre at any $\bK$-point is 
\begin{equation} \label{eq:h1-sheaf}
[p] H^1(\mathit{hom}_{\Tw(\scrB)}(C,C)) [p] \iso H^1(\mathit{hom}_{\Tw^\pi(\scrB)}(P,P)).
\end{equation}
Let $\scrS$ be the subscheme which is the support of this sheaf. Denote by $\scrU_l$ the irreducible components of $\scrU$, and by $\scrS_l \subset \scrU_l$ the piece of $\scrS$ which lies in $\scrU_l$.

Now consider the same construction over $\tilde{\bK}$. Because $\bK$ is already algebraically closed, $\tilde{\scrU}_l = \scrU_l \times_{\bK} \tilde{\bK}$ are the irreducible components of $\tilde{\scrU} = \scrU \times_{\bK} \tilde{\bK}$ \cite[Corollaire 4.4.5]{ega4-2}. Similarly, the support of the correspondingly defined sheaf $\tilde{\scrE}$ is $\tilde{\scrS} = \scrS \times_{\bK} \tilde{\bK}$. Each $\tilde{\bK}$-point of $\tilde{\scrU}$ now gives an object of $\Tw^{\pi}(\tilde{\scrB})$. Moreover, because of rigidity, the quasi-isomorphism type of this object is locally constant on $\tilde{\scrU} \setminus \tilde{\scrS}$, hence constant within each $\tilde{\scrU}_l \setminus \tilde{\scrS}_l$. 

Our original $\tilde{P}$ gives a point in $\tilde{\scrU}_l$, for some $l$, which does not lie in $\tilde{\scrS}_l$. Again using the fact that $\bK$ is algebraically closed, it follows that there is a point in $\scrU_l$ which does not lie in $\scrS_l$ (by an application of \cite[Corollaire 10.4.8]{ega4-3}). That point defines the required $P$.
\end{proof}

\begin{remark}
This argument uses a crude approximation of moduli stacks of objects \cite{toen-vaquie07}. The much more complicated question of fields of definition of $A_\infty$-categories has been studied in \cite{toen08}, but the results there require smoothness assumptions.
\end{remark}


\begin{thebibliography}{10}

\bibitem{abouzaid-ganatra14}
M.~Abouzaid and S.~Ganatra.
\newblock In preparation.

\bibitem{auroux03}
D.~Auroux.
\newblock Monodromy invariants in symplectic topology.
\newblock Preprint math.SG/0304113, 2003.

\bibitem{auroux07}
D.~Auroux.
\newblock Mirror symmetry and {T}-duality in the complement of an anticanonical
  divisor.
\newblock {\em J. G\"okova Geom. Topol.}, 1:51--91, 2007.

\bibitem{auroux-katzarkov-orlov05}
D.~Auroux, L.~Katzarkov, and D.~Orlov.
\newblock Mirror symmetry for {D}el {P}ezzo surfaces: vanishing cycles and
  coherent sheaves.
\newblock {\em Invent. Math.}, 166:537--582, 2006.

\bibitem{auroux-katzarkov-orlov04}
D.~Auroux, L.~Katzarkov, and D.~Orlov.
\newblock Mirror symmetry for weighted projective planes and their
  noncommutative deformations.
\newblock {\em Ann. of Math.}, 167:867--943, 2008.

\bibitem{bayer05}
A.~Bayer.
\newblock Semisimple quantum cohomology and blowups.
\newblock {\em Int. Math. Res. Not.}, pages 2069--2083, 2004.

\bibitem{bien-brion96}
F.~Bien and M.~Brion.
\newblock Automorphisms and local rigidity of regular varieties.
\newblock {\em Compositio Math.}, 104:1--26, 1996.

\bibitem{bondal-orlov95}
A.~Bondal and D.~Orlov.
\newblock Semiorthogonal decomposition for algebraic varieties.
\newblock Preprint alg-geom/\linebreak[0]9506012.

\bibitem{bourgeois-ekholm-eliashberg09}
F.~Bourgeois, T.~Ekholm, and Ya. Eliashberg.
\newblock Effect of {L}egendrian surgery (with an appendix by {S}.~{G}anatra
  and {M}.~{M}aydanskiy).
\newblock {\em Geom. Topol.}, 16:301--390, 2012.

\bibitem{bryan-leung97}
J.~Bryan and N.~Leung.
\newblock The enumerative geometry of {K3} surfaces and modular forms.
\newblock {\em J. Amer. Math. Soc.}, 13:371--410, 2000.

\bibitem{conway-sloane}
J.~Conway and N.~Sloane.
\newblock {\em Sphere packings, lattices, and groups}.
\newblock Springer, 1988.

\bibitem{donagi-grassi-witten96}
R.~Donagi, A.~Grassi, and E.~Witten.
\newblock A nonperturbative superpotential with {$E_8$}-symmetry.
\newblock {\em Modern Phys. Lett. A}, 11:2199--2211, 1996.

\bibitem{donaldson98c}
S.~Donaldson.
\newblock Lefschetz fibrations in symplectic geometry.
\newblock In {\em Proceedings of the {I}nternational {C}ongress of
  {M}athematicians ({B}erlin, 1998)}, volume~II, pages 309--314. Documenta
  Math., 1998.

\bibitem{donaldson02}
S.~Donaldson.
\newblock Lefschetz pencils on symplectic manifolds.
\newblock {\em J. Differential Geom.}, 53:205--236, 1999.

\bibitem{fukaya-oh-ohta-ono10}
K.~Fukaya, Y.-G. Oh, H.~Ohta, and K.~Ono.
\newblock Lagrangian {F}loer theory on compact toric manifolds. {I}.
\newblock {\em Duke Math. J.}, 151:23--174, 2010.

\bibitem{fooo}
K.~Fukaya, Y.-G. Oh, H.~Ohta, and K.~Ono.
\newblock {\em Lagrangian intersection {F}loer theory - anomaly and
  obstruction}.
\newblock Amer. Math. Soc., 2010.

\bibitem{fukaya-oh-ohta-ono11}
K.~Fukaya, Y.-G. Oh, H.~Ohta, and K.~Ono.
\newblock Lagrangian {F}loer theory on compact toric manifolds. {II}: {B}ulk
  deformations.
\newblock {\em Selecta Math.}, 17:609--711, 2011.

\bibitem{ganatra-perutz-sheridan14}
S.~Ganatra, T.~Perutz, and N.~Sheridan.
\newblock Mirror symmetry: from categories to curve counts.
\newblock Preprint arXiv:1510.03839, 2015.

\bibitem{gathmann01}
A.~Gathmann.
\newblock Gromov-{W}itten invariants of blow-ups.
\newblock {\em J. Algebraic Geom.}, 10:399--432, 2001.

\bibitem{goldman-millson88}
W.~Goldman and J.~Millson.
\newblock The deformation theory of the fundamental group of compact
  {K}{\"a}hler manifolds.
\newblock {\em IHES Publ. Math.}, 67:43--96, 1988.

\bibitem{goettsche-pandharipande98}
L.~G{\"o}ttsche and R.~Pandharipande.
\newblock The quantum cohomology of blow-ups of {$\mathbf{P}^2$} and
  enumerative geometry.
\newblock {\em J. Differential Geom.}, 48:61--90, 1998.

\bibitem{ega4-2}
A.~Grothendieck.
\newblock {\em \'{E}l\'ements de g\'eom\'etrie alg\'ebrique. {IV}. \'{E}tude
  locale des sch\'emas et des morphismes de sch\'emas. {II}}, volume~24 of {\em
  IHES Publ. Math.}
\newblock 1965.

\bibitem{ega4-3}
A.~Grothendieck.
\newblock {\em \'{E}l\'ements de g\'eom\'etrie alg\'ebrique. {IV}. \'{E}tude
  locale des sch\'emas et des morphismes de sch\'emas. {III}}.
\newblock Number~28 in IHES Publ. Math. 1966.

\bibitem{hosono-saito-stienstra97}
S.~Hosono, M.~Saito, and J.~Stienstra.
\newblock On the mirror symmetry conjecture for {S}choen's {C}alabi-{Y}au
  3-folds.
\newblock In {\em Integrable systems and algebraic geometry ({P}roceedings of
  the {T}aniguchi Symposium 1997)}, pages 194--235. World Scientific, 1998.

\bibitem{hu00}
J.~Hu.
\newblock Gromov-{W}itten invariants of blow-ups along points and curves.
\newblock {\em Math. Z.}, 233:709--739, 2000.

\bibitem{kaledin07}
D.~Kaledin.
\newblock Some remarks on formality in families.
\newblock {\em Mosc. Math. J.}, 7:643--652, 766, 2007.

\bibitem{keating14}
A.~Keating.
\newblock Lagrangian tori in four-dimensional {M}ilnor fibres.
\newblock Preprint arXiv:1405.0744, 2014.

\bibitem{kontsevich94}
M.~Kontsevich.
\newblock Homological algebra of mirror symmetry.
\newblock In {\em Proceedings of the International Congress of Mathematicians
  (Z{\"u}rich, 1994)}, pages 120--139. Birkh{\"a}user, 1995.

\bibitem{kontsevich-manin94}
M.~Kontsevich and Yu.~I. Manin.
\newblock {G}romov-{W}itten classes, quantum cohomology, and enumerative
  geometry.
\newblock {\em Commun. Math. Phys.}, 164:525--562, 1994.

\bibitem{kontsevich-vlassopoulos13}
M.~Kontsevich and Y.~Vlassopoulos.
\newblock Pre-{C}alabi-{Y}au algebras and topological quantum field theories.
\newblock In Preparation.

\bibitem{lekili-perutz11}
Y.~Lekili and T.~Perutz.
\newblock Fukaya categories of the torus and {D}ehn surgery.
\newblock {\em Proc. Natl. Acad. Sci. USA}, 108:8106--8113, 2011.

\bibitem{lunts10}
V.~Lunts.
\newblock Formality of {DG} algebras (after {K}aledin).
\newblock {\em J. Algebra}, 323:878--898, 2010.

\bibitem{mau11}
S.~Ma'u.
\newblock Quilted {F}loer modules.
\newblock Conference talk, recording available at
  \url{http://media.scgp.stonybrook.edu/video/video.php?f=20110518_3_qtp.mp4},
  2011.

\bibitem{mckean-moll99}
H.~McKean and V.~Moll.
\newblock {\em Elliptic Curves: function Theory, geometry, arithmetic}.
\newblock Cambridge Univ. Press, 1999.

\bibitem{miranda89}
R.~Miranda.
\newblock {\em The basic theory of elliptic surfaces}.
\newblock ETS Editrice, 1989.

\bibitem{oh11}
Y.-G. Oh.
\newblock Seidel's long exact sequence on {C}alabi-{Y}au manifolds.
\newblock {\em Kyoto J. Math.}, 51(3):687--765, 2011.

\bibitem{persson90}
U.~Persson.
\newblock Configurations of {K}odaira fibers on rational elliptic surfaces.
\newblock {\em Math. Z.}, 205:1--47, 1990.

\bibitem{perutz10}
T.~Perutz.
\newblock Talk at the {AIM} workshop {\em Cyclic homology and symplectic
  topology}, 2009.

\bibitem{roan95}
S.-S. Roan.
\newblock Mirror symmetry of elliptic curves and {I}sing model.
\newblock {\em J. Geom. Phys.}, 20:273--296, 1996.

\bibitem{seidel00b}
P.~Seidel.
\newblock More about vanishing cycles and mutation.
\newblock In K.~Fukaya, Y.-G. Oh, K.~Ono, and G.~Tian, editors, {\em
  {S}ymplectic {G}eometry and {M}irror {S}ymmetry ({P}roceedings of the 4th
  {KIAS} Annual International Conference)}, pages 429--465. World Scientific,
  2001.

\bibitem{seidel02}
P.~Seidel.
\newblock Fukaya categories and deformations.
\newblock In {\em Proceedings of the International Congress of Mathematicians
  (Beijing)}, volume~2, pages 351--360. Higher Ed. Press, 2002.

\bibitem{seidel08}
P.~Seidel.
\newblock {$A\sb \infty$}-subalgebras and natural transformations.
\newblock {\em Homology, Homotopy Appl.}, 10:83--114, 2008.

\bibitem{seidel04}
P.~Seidel.
\newblock {\em {F}ukaya categories and {P}icard-{L}efschetz theory}.
\newblock European Math. Soc., 2008.

\bibitem{seidel06}
P.~Seidel.
\newblock {S}ymplectic homology as {H}ochschild homology.
\newblock In {\em {A}lgebraic {G}eometry: {S}eattle 2005}, volume~1, pages
  415--434. Amer.\ Math.\ Soc., 2008.

\bibitem{seidel08b}
P.~Seidel.
\newblock Homological mirror symmetry for the genus two curve.
\newblock {\em J. Algebraic Geom.}, 20:727--769, 2011.

\bibitem{seidel11b}
P.~Seidel.
\newblock Simple examples of distinct {L}iouville type symplectic structures.
\newblock {\em J. Topol. Anal.}, 3:1--5, 2011.

\bibitem{seidel12b}
P.~Seidel.
\newblock {F}ukaya categories associated to {L}efschetz fibrations. {I}.
\newblock {\em J. Symplectic Geom.}, 10:325--388, 2012.

\bibitem{seidel12}
P.~Seidel.
\newblock Lagrangian homology spheres in {$(A_m)$} {M}ilnor fibres via
  {$\mathbb{C}^*$}-equivariant ${A}_\infty$-modules.
\newblock {\em Geom. Topol.}, 16:2343--2389, 2012.

\bibitem{seidel14b}
P.~Seidel.
\newblock {F}ukaya categories associated to {L}efschetz fibrations. {II}.
\newblock Preprint arXiv:1404.1352, 2014.

\bibitem{seidel14}
P.~Seidel.
\newblock {P}icard-{L}efschetz theory and dilating {$\mathbb{C}^*$}-actions.
\newblock Preprint arXiv:1403.7571, 2014.

\bibitem{seidel03b}
P.~Seidel.
\newblock {\em Homological mirror symmetry for the quartic surface}, volume
  1116 of {\em Mem. Amer. Math. Soc.}
\newblock Amer. Math. Soc., 2015.

\bibitem{seidel-solomon10}
P.~Seidel and J.~Solomon.
\newblock Symplectic cohomology and {$q$}-intersection numbers.
\newblock {\em Geom. Funct. Anal.}, 22:443--477, 2012.

\bibitem{sheridan11b}
N.~Sheridan.
\newblock Homological mirror symmetry for {C}alabi-{Y}au hypersurfaces in
  projective space.
\newblock {\em Invent. Math.}, 199:1--186, 2015.

\bibitem{sibilla11}
N.~Sibilla.
\newblock H{MS} for punctured tori and categorical mapping class group actions.
\newblock In {\em String-{M}ath 2011}, volume~85 of {\em Proc. Sympos. Pure
  Math.}, pages 467--476. Amer. Math. Soc., 2012.

\bibitem{toen08}
B.~To{\"e}n.
\newblock Anneaux de d\'efinition des dg-alg\`ebres propres et lisses.
\newblock {\em Bull. Lond. Math. Soc.}, 40:642--650, 2008.

\bibitem{toen-vaquie07}
B.~To{\"e}n and M.~Vaqui{\'e}.
\newblock Moduli of objects in dg-categories.
\newblock {\em Ann. Sci. \'Ecole Norm. Sup.}, 40:387--444, 2007.

\bibitem{ueda06}
K.~Ueda.
\newblock Homological mirror symmetry for toric del {P}ezzo surfaces.
\newblock {\em Commun. Math. Phys.}, 264:71--85, 2006.

\bibitem{zagier98}
D.~Zagier.
\newblock A modular identity arising from mirror symmetry.
\newblock In {\em Integrable systems and algebraic geometry ({P}roceedings of
  the {T}aniguchi Symposium 1997)}, pages 477--480. World Scientific, 1998.

\end{thebibliography}

\end{document}